\newcommand{\tops}[2]{\texorpdfstring{#1}{#2}}
\newcommand{\Om}{\Omega}
\def\@cite#1#2{[{{\bfseries #1}\if@tempswa , #2\fi}]}
\renewcommand{\section}{%
\@startsection{section}{1}{\z@}
{0.5truecm plus -1ex minus -.2ex}%
{1.0ex plus .2ex}{\bfseries\large}}
\def\@seccntformat#1{\csname the#1\endcsname.\ }
\numberwithin{equation}{section} 
\newtheorem{thm}{Theorem}[section]
\newtheorem{lem}[thm]{Lemma}
\theoremstyle{definition}
\newtheorem{df}{Definition}[section]
\newtheorem{remark}{Remark}[section]
\newtheorem*{prth1.1}{Proof of Theorem 1.1}
\newtheorem*{prth1.2}{Proof of Theorem 1.2}
\newcommand{\ep}{\varepsilon}
\newcommand{\pa}{\partial}
\newcommand{\R}{\mathbb{R}}
\newcommand{\N}{\mathbb{N}}
\newcommand{\Tmax}{T_{\rm max}}
\newcommand{\Tmaxd}{T_{\rm max}^{d_1, d_2}}
\newcommand{\wsc}{\stackrel{\ast}{\rightharpoonup}}
\newcommand{\Ombar}{\overline{\Omega}}
\newcommand{\dt}{\dfrac{{\rm d}}{{\rm d}t}}
\newcommand{\norm}[2][]{\|#2\|_{#1}}
\newcommand{\Lom}[1]{L^{#1}(\Omega)}
\newcommand{\io}{\int_\Omega}
\newcommand{\set}[1]{\{#1\}}
\newcommand{\kl}[1]{\left(#1\right)}
\newcommand{\f}[2]{\frac{#1}{#2}}
\newcommand{\til}[1]{\widetilde{#1}}
\newcommand{\util}{\til{u}}
\newcommand{\vtil}{\til{v}}
\title{Large densities\\ in a competitive two-species chemotaxis system\\ in the non-symmetric case}
\author{Shohei Kohatsu\footnote{sho.kht.jp@gmail.com}\\
 \footnotesize   Department of Mathematics, 
    Tokyo University of Science\\
 \footnotesize   1-3, Kagurazaka, Shinjuku-ku, 
    Tokyo 162-8601, Japan\\
\footnotesize    ORCID: 0009-0005-8991-8970\\[12pt]
Johannes Lankeit\footnote{lankeit@ifam.uni-hannover.de}\\
\footnotesize Leibniz Universität Hannover, 
Institut für Angewandte Mathematik\\
\footnotesize Welfengarten 1, 
30167 Hannover, Germany\\
\footnotesize ORCID: 0000-0002-2563-7759
}%
\begin{document}
\maketitle

\textbf{Abstract}
This paper deals with the two-species chemotaxis system with Lotka--Volterra competitive kinetics,
\begin{align*}
  \begin{cases}
    u_t = d_1 \Delta u - \chi_1 \nabla \cdot (u \nabla w)
             + \mu_1 u (1 - u - a_1 v),
    & x\in\Omega,\ t>0, 
  \\
    v_t = d_2 \Delta v - \chi_2 \nabla \cdot (v \nabla w)
             + \mu_2 v (1 - a_2 u - v),
    & x\in\Omega,\ t>0,
  \\
    0 = d_3 \Delta w + \alpha u + \beta v - \gamma w,
    & x\in\Omega,\ t>0,
  \end{cases}
\end{align*}
under homogeneous Neumann boundary conditions and suitable initial conditions,
where $\Omega \subset \mathbb{R}^n$ $(n \in \mathbb{N})$ is a bounded domain
with smooth boundary,
$d_1, d_2, d_3, \chi_1, \chi_2, \mu_1, \mu_2 > 0$,
$a_1, a_2 \ge 0$ and $\alpha, \beta, \gamma > 0$.
Under largeness conditions on $\chi_1$ and $\chi_2$, we show that
for suitably regular initial data, any thresholds of the population density
can be surpassed,
which extends the previous results to the non-symmetric case.\\
The paper contains a well-posedness result for the hyperbolic--elliptic limit system with $d_1=d_2=0$.

\textbf{Key words:} Chemotaxis, logistic source, transient growth, well-posedness, blow-up
\textbf{MSC:} 35K55 (primary); 35B44; 35D30; 92C17 (secondary)
%%==============================================================%%
%%==============================================================%%
%%                                               Section 1                                              %%
%%                                             Introduction                                            %%
%%==============================================================%%
%%==============================================================%%
\section{Introduction}
Chemotaxis systems are used to describe aggregation
(usually in biological settings, caused by motion attracted to a self-produced signal chemical),
which often manifests in the striking form of finite-time blow-up.
In certain model variants
which have been introduced to consider biologically more realistic situations 
such as the inclusion of population or cell growth,
such blow-up is either not known to occur or known not to occur.
Nevertheless, even in such settings, large population densities can emerge.
The particular system that we address in this paper
is the following two-species 
chemotaxis system
with Lotka--Volterra competitive kinetics:
\begin{align}\label{2sp-LV}
  \begin{cases}
    u_t = d_1 \Delta u - \chi_1 ∇ \cdot (u ∇ w)
             + \mu_1 u (1 - u - a_1 v),
    & x\in\Omega,\ t>0, 
  \\
    v_t = d_2 \Delta v - \chi_2 ∇ \cdot (v ∇ w)
             + \mu_2 v (1 - a_2 u - v),
    & x\in\Omega,\ t>0,
  \\
    0 = d_3 \Delta w + \alpha u + \beta v - \gamma w,
    & x\in\Omega,\ t>0,
  \\
    ∇ u \cdot \nu = ∇ v \cdot \nu = ∇ w \cdot \nu = 0,
    & x\in \pa\Omega,\ t>0,
  \\
    u(x,0)=u_0(x),\ v(x,0)=v_0(x),
    & x\in\Omega,
  \end{cases}
\end{align}
where $\Omega \subset \R^n$ $(n \in \N)$ is a bounded domain
with smooth boundary $\pa\Omega$;
$d_1, d_2 > 0$, $d_3, \chi_1, \chi_2, \mu_1, \mu_2 > 0$, $a_1, a_2 \ge 0$
and $\alpha, \beta, \gamma > 0$ are constants; 
$\nu$ is the outward normal vector to $\pa\Omega$.
The initial data $u_0$ and $v_0$
are assumed to be nonnegative functions.

This model describes
a situation in which two species (with densities $u$ and $v$) compete for the resources
and move toward a higher concentration ($w$) of the signal substance
produced by themselves.
The large time behaviour of solutions to system \eqref{2sp-LV}
was initially studied by Tello and Winkler \cite{TW-2012}.

As in the classical Keller--Segel system (see \cite{KS-1970} or surveys \cite{BBTW,horstmann_survey,lan_win_survey}), where the question of whether blow-up occurs has been studied extensively (e.g. \cite{NS-1998, B-1998, N-2001} for the parabolic--elliptic and \cite{SS-2001, HW-2001, W-2013} for the fully parabolic system variant), also for the one-species analogue
\begin{align}\label{1sp}
  \begin{cases}
    u_t = \ep \Delta u - ∇ \cdot (u ∇ w)
     + \lambda u - \mu u^2,
    & x\in\Omega,\ t>0, 
  \\
    0 = \Delta w - w + u,
    & x\in\Omega,\ t>0
  \end{cases}
\end{align}
of \eqref{2sp-LV}, the possibility of blow-up has received some attention. For $n=2$, its fully parabolic version was first studied in \cite{MT-1996} and was proven to have global solutions \cite{OTYM-2002}, while \eqref{1sp} in this ``parabolic--elliptic'' form was treated in \cite{TW-2007}: If $n\le 2$ or, in higher dimension $n\ge 3$, $μ>\frac{n-2}{n}$, classical solutions exist, and according to \cite{KS-2016} this extends to $μ=\frac{n-2}{n}$.

On the other hand,
when the logistic source of \eqref{1sp} is replaced by $\lambda u - \mu u^{\kappa}$,
some solutions blow up in finite time
when $\kappa \in (1, \frac{7}{6})\ (n = 3,4)$
or $\kappa \in (1, 1 + \frac{1}{2(n-1)})\ (n \ge 5)$ \cite{W-2018}. 
Similar finite-time blow-up result for \eqref{1sp} with a modified second equation
was proved in \cite{F-2021}.
However, it is still not fully investigated whether solutions to \eqref{1sp}
may blow up or not.

Nevertheless: Blow-up is not the only possible manifestation of emerging structure, and regarding \eqref{2sp-LV} there are numerical observations suggesting the existence of large population densities at intermediate times (cf. \cite{BLM-2016}); in \eqref{1sp}, such possibly transient large population densities have been captured in proofs:
When $n = 1$, Winkler \cite{W-2014} showed that
if $\mu \in (0,1)$,
under an appropriate condition on the initial data,
one can find some time $T > 0$ with the property that
for each $M > 0$ there is $\ep_0 > 0$ such that
whenever $\ep \in (0, \ep_0)$, there is
$(x_{\ep}, t_{\ep}) \in \Omega \times (0,T)$ such that
the solution $(u_{\ep}, v_{\ep})$ to \eqref{1sp} satisfies
\begin{align}\label{TGP}
    u_{\ep}(x_{\ep}, t_{\ep}) > M.
\end{align}
Later on, this result was extended to
higher-dimensional case under the condition that
$\Omega$ is a ball 
and the solutions are radially symmetric~\cite{L-2015}. These results mean that
if the death rate $\mu$ is small enough
and the diffusion is weak,
any thresholds of the population density can be surpassed.
We note that this can happen
even in the cases that solutions are always global and bounded.

The key to the proofs in \cite{W-2014, L-2015} lies in considering 
the following system, 
which can be obtained by letting $\ep \to 0$ in \eqref{1sp}:
\begin{align}\label{1sp-HE}
  \begin{cases}
    u_t = - ∇ \cdot (u ∇ w)
     + \lambda u - \mu u^2,
    & x\in\Omega,\ t>0, 
  \\
    0 = \Delta w - w + u,
    & x\in\Omega,\ t>0,
  \end{cases}
\end{align}
and a blow-up result in \eqref{1sp-HE} was obtained,
which leads to the transient growth phenomena \eqref{TGP}
for the system \eqref{1sp}.
This system was also studied by Kang and Stevens \cite{KS-2016} who due to a crucial insight could avoid any requirement of radial symmetry and obtained a blow-up result in the cases that $\Om=ℝ^n$ or that $\Om\subset ℝ^n$ is a bounded convex domain.

Also global existence and boundedness of solutions of \eqref{2sp-LV}
have been widely studied.
For instance, when $\alpha = \beta = \gamma = 1$ and $a_1, a_2 > 0$,
Lin et al.~\cite{LMZ-2018} established global boundedness in the case that
$\frac{\chi_1}{\mu_1} + \frac{\chi_2}{\mu_2} < d_3$;
when $\alpha, \beta, \gamma > 0$, Mizukami~\cite{M-2018} obtained the result
under the conditions that
\begin{align}\label{M2018}
  n=2, \quad\mbox{or}\quad
  n \ge 3, \ \frac{\chi_1}{\mu_1} 
    < \frac{n d_3}{n-2} \min\left\{\frac{1}{\alpha}, \frac{a_1}{\beta}\right\}
  \ \mbox{and}\ 
  \frac{\chi_2}{\mu_2} < \frac{n d_3}{n-2}
    \min\left\{\frac{1}{\beta}, \frac{a_2}{\alpha}\right\}.
\end{align}
Similar results regarding global existence and boundedness were obtained
by Wang~\cite{W-2020},
where the conditions on parameters were improvement of those in \cite{LMZ-2018}.
They also extended parameter ranges for $\frac{\chi_1}{\mu_1}$ and
$\frac{\chi_2}{\mu_2}$ in \cite{M-2018}
when $a_1 \le \frac{n}{n-2}\cdot \frac{\beta}{\alpha}$ as well as
$a_2 \le \frac{n}{n-2}\cdot \frac{\alpha}{\beta}$;
more precisely, they showed global boundedness when
\[
    \frac{a_1 d_3}{\beta} < \frac{\chi_1}{\mu_1} < \frac{d_3}{\alpha}, \ 
    \frac{a_2 d_3}{\alpha} < \frac{\chi_2}{\mu_2} < \frac{d_3}{\beta} \ \mbox{and}\ 
    \frac{\alpha - \beta a_2}{d_3} \cdot \frac{\chi_1}{\mu_1}
    + \frac{\beta - \alpha a_1}{d_3} \cdot \frac{\chi_2}{\mu_2}
    + a_1 a_2 < 1.
\]
%
%}%
On the other hand, when the terms $\mu_1 u (1 - u - a_1 v)$ and
$\mu_2 v (1 - a_2 u - v)$ in \eqref{2sp-LV} are replaced by related terms with smaller exponents,
$\mu_1 u (1 - u^{\kappa_1 - 1} - a_1 v^{\lambda_1 - 1})$ and
$\mu_2 v (1 - a_2 u^{\lambda_2 - 1} - v^{\kappa_2 - 1})$, respectively,
Mizukami et al.~\cite{MTY-2022} established the possibility of finite-time blow-up
when $1 < \kappa_1, \lambda_1, \kappa_2, \lambda_2
< \min\{\frac{7}{6}, 1 + \frac{1}{2(n-1)}\} \ (n \ge 3)$.

Returning to the setting of $κ_1=κ_2=λ_1=λ_2=2$, which is not covered by the available blow-up results, but corresponds to the classical Lotka-Volterra competition terms, we are interested in
whether transient growth like \eqref{TGP}
occurs in the system \eqref{2sp-LV}.
However, 
unlike in the case of the one-species problem,
the $L^p$-estimates of $u(\cdot,t)$ and $v(\cdot,t)$ in \eqref{2sp-LV}
affect each other not only via the joint signal substance, but more directly because of the absorptive competition terms $- a_1 uv$, $- a_2 u v$
and this makes it difficult to analyze each species individually. Note that absorption should have a tendency to keep solutions small, and hence counteract any blow-up or emergent large densities. Still, the latter may develop: Recently, this question has been investigated by Xu \cite{X-2020} by tracking the sum of densities $u + v$.
Under the assumption of radial symmetry they studied the following 
hyperbolic--hyperbolic--elliptic system
\begin{align}\label{HHE-LV}
  \begin{cases}
    u_t = - \chi_1 ∇ \cdot (u ∇ w)
             + \mu_1 u (1 - u - a_1 v),
    & x\in\Omega,\ t>0, 
  \\
    v_t = - \chi_2 ∇ \cdot (v ∇ w)
             + \mu_2 v (1 - a_2 u - v),
    & x\in\Omega,\ t>0,
  \\
    0 = d_3 \Delta w + \alpha u + \beta v - \gamma w,
    & x\in\Omega,\ t>0,
  \\
    ∇ u \cdot \nu = ∇ v \cdot \nu = ∇ w \cdot \nu = 0,
    & x\in \pa\Omega,\ t>0,
  \\
    u(x,0)=u_0(x),\ v(x,0)=v_0(x),
    & x\in\Omega
  \end{cases}
\end{align}
which can be obtained by letting $d_1, d_2 \to 0$ in \eqref{2sp-LV},
and established conditions which implies that
larger initial data and small measure of domain ensure
blow-up results in \eqref{HHE-LV}.
After that,
they used this result to earn a property like \eqref{TGP} 
for the system \eqref{2sp-LV}.

Apart from notable improvements over \cite{X-2020} regarding the admissible parameter range (see Remark~\ref{condition:remark}), we avoid the symmetry condition on initial data -- which was needed in \cite{L-2015,X-2020} -- and convexity assumptions on the domain (\cite{KS-2016},
also used in \cite{X-2023-1}).

Our first main result reads as follows.
%
%----------------------------------------------------------------%
%----------------------------------------------------------------%
%                                               Theorem 1.1                                             %
%----------------------------------------------------------------%
%----------------------------------------------------------------%
%
\begin{thm}\label{main_thm}
Let $n\in ℕ$, and let $\Omega \subset \R^n$ be a bounded domain with smooth boundary.
Let $d_3, \chi_1, \chi_2, \mu_1, \mu_2 > 0$,
$a_1, a_2 \ge 0$ and
$\alpha, \beta, \gamma > 0$.
Then for all $p > 1$ which satisfy one of the following
%%%%%%%%%%%%%%%%%%%%%%%%%%%%%%%%%%%%%%%%%%%%%%%%%%%%%%%%%%%%%%%%%%%%%%
%%%%%%%%%%%%%%%%%%%%%%%%%%%%%%%%   old   %%%%%%%%%%%%%%%%%%%%%%%%%%%%%%%%%%%
%%%%%%%%%%%%%%%%%%%%%%%%%%%%%%%%%%%%%%%%%%%%%%%%%%%%%%%%%%%%%%%%%%%%%%
%
%\begin{align}
%      &\chi_1 >
%  \frac{d_3 \mu_1 p (p + 1) + d_3 a_1 \mu_1 p^2 + d_3 a_2 \mu_2 p}
%         {\alpha (p^2 - 1)}
%    \label{condition:p:1}
%\\
%    \intertext{and}
%  &\chi_2 >
%  \frac{d_3 \mu_2 p (p + 1) + d_3 a_2 \mu_2 p^2 + d_3 a_1 \mu_1 p}
%         {\beta (p^2 - 1)}
%    \label{condition:p:2}
%\end{align}
%
%%%%%%%%%%%%%%%%%%%%%%%%%%%%%%%%%%%%%%%%%%%%%%%%%%%%%%%%%%%%%%%%%%%%%%
%%%%%%%%%%%%%%%%%%%%%%%%%%%%%%%%%%%%%%%%%%%%%%%%%%%%%%%%%%%%%%%%%%%%%%
%%%%%%%%%%%%%%%%%%%%%%%%%%%%%%%%%%%%%%%%%%%%%%%%%%%%%%%%%%%%%%%%%%%%%%
%
%%%%%%%%%%%%%%%%%%%%%%%%%%%%%%%%%%%%%%%%%%%%%%%%%%%%%%%%%%%%%%%%%%%%%%
%%%%%%%%%%%%%%%%%%%%%%%%%%%%%%%%   new   %%%%%%%%%%%%%%%%%%%%%%%%%%%%%%%%%%%
%%%%%%%%%%%%%%%%%%%%%%%%%%%%%%%%%%%%%%%%%%%%%%%%%%%%%%%%%%%%%%%%%%%%%%
%
%\renewcommand{\theenumi}{\Roman{enumi}}
%\renewcommand{\labelenumi}{\textrm{\bf (\Roman{enumi})}}
%\begin{enumerate}
%\item 
\begin{align}
&{\rm (i)} &
&\chi_1 > \dfrac{p d_3}{p-1} \mu_1 \max\left\{\dfrac{1}{\alpha},
\dfrac{a_1}{\beta}\right\}
\quad\mbox{and}\quad
\chi_2 > \dfrac{p d_3}{p-1}\mu_2 \max\left\{\dfrac{1}{\beta},
\dfrac{a_2}{\alpha}\right\}, \label{condition:p:1}
\\[1mm]
&{\rm (ii)} &
&
\begin{cases}
\chi_1 > \max\left\{\dfrac{d_3 p (p+1) \mu_1 + d_3 p a_2 \mu_2 - \chi_2 \alpha (p-1)}
{\alpha(p^2 - 1)}, \dfrac{d_3 p a_1 \mu_1}{\beta (p-1)}\right\}\quad\mbox{and}
\\[3mm]
\dfrac{d_3 p (p+1) \mu_2 + d_3 p^2 a_2 \mu_2}{\alpha p(p-1) + \beta (p^2-1)}
< \chi_2 \le \dfrac{d_3 p a_2 \mu_2}{\alpha (p-1)},
\end{cases} \label{condition:p:2}
\\[1mm]
&{\rm (iii)} &
&
\begin{cases}
\dfrac{d_3 p (p+1) \mu_1 + d_3 p^2 a_1 \mu_1}{\alpha (p^2-1) + \beta p(p-1)}
< \chi_1 \le \dfrac{d_3 p a_1 \mu_1}{\beta (p-1)}\quad\mbox{and}
\\[3mm]
\chi_2 > \max\left\{\dfrac{d_3 p a_1 \mu_1 + d_3 p (p + 1) \mu_2 - \chi_1 \beta (p-1)}
{\beta(p^2 - 1)}, \dfrac{d_3 p a_2 \mu_2}{\alpha (p-1)}\right\},
\end{cases} \label{condition:p:3}
\\[1mm]
&{\rm (iv)} &
&
\begin{cases}
\dfrac{d_3 p (p+1)\mu_1 + d_3 p^2 a_1 \mu_1 + d_3 p a_2 \mu_2 - \chi_2 \alpha(p-1)}
{\alpha (p^2-1) + \beta p (p-1)}
< \chi_1 \le \dfrac{d_3 p a_1 \mu_1}{\beta (p-1)}
\quad\mbox{and}
\\[3mm]
\dfrac{d_3 p (p+1)\mu_2 + d_3 p a_1 \mu_1 + d_3 p^2 a_2 \mu_2 - \chi_1 \beta(p-1)}
{\alpha p (p-1) + \beta (p^2-1)}
< \chi_2 \le \dfrac{d_3 p a_2 \mu_2}{\alpha (p-1)},
\end{cases} \label{condition:p:4}
\end{align}
%\end{enumerate}
%
%%%%%%%%%%%%%%%%%%%%%%%%%%%%%%%%%%%%%%%%%%%%%%%%%%%%%%%%%%%%%%%%%%%%%%
%%%%%%%%%%%%%%%%%%%%%%%%%%%%%%%%%%%%%%%%%%%%%%%%%%%%%%%%%%%%%%%%%%%%%%
%%%%%%%%%%%%%%%%%%%%%%%%%%%%%%%%%%%%%%%%%%%%%%%%%%%%%%%%%%%%%%%%%%%%%%
there is $C(p) > 0$ with the following property\/{\rm :}
Whenever $q > \max\{n, 2\}$ and
$u_0, v_0 \in W^{1,q}(\Omega)$ are nonnegative such that
\begin{align}\label{condition:initial}
%    \left(\io u_0^p + \io v_0^p \right)^{\frac{1}{p}}
%    > C(p) \max\left\{\dfrac{1}{|\Omega|} 
%        \left(\io u_0 + \io v_0\right), 2\right\},
    \left(\io u_0^p + \io v_0^p \right)^{\frac{1}{p}}
    > C(p) \max\left\{1,\io u_0 + \io v_0\right\},
\end{align}
there is $\Tmax \in (0, \infty]$ and for each $d_1,d_2>0$ there is a uniquely determined triple $(u,v,w)$ of functions
which solves \eqref{2sp-LV} classically in $\Omega \times (0, \Tmax)$,
and moreover,
there exists $T \in (0, \Tmax)$ such that
to each $M > 0$ there corresponds some $d_{*}(M) > 0$
with the property that
for any $d_1, d_2 \in (0, d_{*}(M))$ one can find
$(x,t) \in \Omega \times (0, T)$ such that
$(u,v,w)$ satisfies
\[
    u(x,t) + v(x,t) > M.
\]
\end{thm}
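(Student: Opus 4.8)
The overall strategy follows the by-now standard two-step route pioneered in \cite{W-2014,L-2015} and used also in \cite{X-2020}: first establish a blow-up (or at least a ``large value'') statement for the limit problem \eqref{HHE-LV} with $d_1=d_2=0$, then transfer it to \eqref{2sp-LV} for small $d_1,d_2$ by a continuous-dependence argument. The key quantity throughout is the functional
\begin{align*}
  y(t) := \io u^p(\cdot,t) + \io v^p(\cdot,t),
\end{align*}
for a suitable $p>1$ satisfying one of (i)--(iv). The first and main step is to derive, from the PDEs in \eqref{HHE-LV}, a differential inequality of superlinear type for $y$. Testing the $u$-equation with $u^{p-1}$ and the $v$-equation with $v^{p-1}$ and integrating by parts, the chemotactic terms produce $(p-1)\io u^p\,\nabla u\cdot\nabla w/u = \ldots$ which, after using the third equation $0=d_3\Delta w+\alpha u+\beta v-\gamma w$ to substitute $\Delta w = \tfrac{1}{d_3}(\gamma w-\alpha u-\beta v)$, yields positive contributions of the form $\tfrac{(p-1)\chi_1\alpha}{d_3 p}\io u^{p+1}$, $\tfrac{(p-1)\chi_1\beta}{d_3 p}\io u^p v$, and symmetric terms from the $v$-equation, while the ``bad'' term $\gamma\io u^p w$ is controlled because $w\ge 0$ and can be absorbed or estimated via elliptic regularity against $\io u+\io v$. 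The logistic and competitive terms contribute $-\mu_1 p\io u^{p+1}$, $-\mu_1 p a_1\io u^p v$, etc.\ (after dropping the favourable lower-order $+\mu_1 p\io u^p$). The algebraic content of conditions (i)--(iv) is precisely that, for the chosen $p$, the coefficient matrix of the resulting quadratic form in the monomials $\io u^{p+1},\io u^p v,\io u v^p,\io v^{p+1}$ is such that the whole right-hand side dominates a positive multiple of $\io u^{p+1}+\io v^{p+1}$, which by Hölder on the bounded domain $\Omega$ is $\ge c\,y^{(p+1)/p}$ minus a correction proportional to $(\io u+\io v)^{p+1}$; since mass is nonincreasing (integrate the first two equations and add, using $w\ge 0$ for the chemotactic divergence and the sign of the competition terms), that correction is bounded by a constant depending on the initial mass. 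Hence $y'\ge c\,y^{(p+1)/p} - C'$, and condition \eqref{condition:initial} with the right $C(p)$ guarantees $y(0)$ is large enough that $y'\ge \tfrac{c}{2}y^{(p+1)/p}$ initially and stays in the blow-up regime, forcing $y$ to become infinite in finite time $T_0$.

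The second step is local well-posedness and, crucially, uniform-in-$(d_1,d_2)$ a priori bounds on a fixed time interval $[0,T]$ with $T<T_0$. For fixed $d_1,d_2>0$ one gets a local classical solution $(u,v,w)$ on $[0,\Tmax)$ by standard fixed-point/parabolic theory (the third equation solved by elliptic regularity in terms of $u,v$); one needs the well-posedness of the limit system \eqref{HHE-LV}, which the paper states it provides. The transfer argument then runs as follows: suppose, for contradiction, that the conclusion $u+v>M$ fails on $\Omega\times(0,T)$ for a sequence $d_1,d_2\to 0$; then $\|u\|_{L^\infty},\|v\|_{L^\infty}\le M$ on that interval, which via the third equation gives $w$ bounded in (say) $W^{2,q}$, and then by parabolic regularity $u,v$ are bounded in $C^{2+\theta,1+\theta/2}$ on compact subsets, uniformly in $d_1,d_2$. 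This compactness lets one pass to the limit $d_1,d_2\to 0$ and obtain a solution of \eqref{HHE-LV} on $[0,T]$ that also satisfies $u+v\le M$ there — but this contradicts Step 1, because the limit solution's functional $y$ must blow up before time $T_0>T$. Equivalently and more quantitatively: the $d$-dependent solution's $y_d(t)$ satisfies the same differential inequality up to an error term $-d_i\cdot(\text{diffusion contribution})$, which is \emph{favourable} (diffusion only helps dissipate $y$), so in fact $y_d$ is dominated from below by the same ODE comparison and blows up no later than $T_0$ — hence $\Tmax\le T_0$ would follow unless large values appear first; but to get the stated ``large values on $(0,T)$'' rather than mere finite-time blow-up one does need the contradiction/compactness form, since for $d_i>0$ the solution might in principle stay bounded globally.

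The main obstacle I expect is Step 1, specifically making the quadratic-form estimate tight enough to match the rather intricate parameter regions (i)--(iv): one must be careful about how the cross terms $\io u^p v$ and $\io u v^p$ are handled — they cannot simply be dropped (they carry a definite sign only in combination) and cannot be crudely Young-estimated without losing the sharp thresholds. The four cases correspond to whether each of $\chi_i$ is large enough to beat the ``diagonal'' logistic damping on its own (case (i)), or whether one borrows positivity from the other species' chemotactic cross term to compensate a deficit (cases (ii)--(iv)); encoding this requires choosing, in each regime, the right linear combination $y=\io u^p+\io v^p$ versus a weighted version $\io u^p + \kappa\io v^p$, and verifying an explicit $2\times2$ (or $4\times 4$) positivity condition. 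A secondary technical point is the treatment of the term $\gamma\io u^p w$: since $w$ is not a priori bounded at the limit-system level, one must use that $w$ solves the elliptic equation and exploit $\io u^p w \le \tfrac{1}{\gamma}(d_3\io u^p(-\Delta w) + \alpha\io u^{p+1}+\beta\io u^p v)$ together with $\io u^p(-\Delta w)\ge -\tfrac{p-1}{p}\io u^{p-2}|\nabla u|^2\cdot(\ldots)$ — in fact this is exactly what couples back into the chemotactic gradient term, so the bookkeeping of signs there is the heart of the computation. I would also take care that the constant $C(p)$ in \eqref{condition:initial} is extracted explicitly from the comparison ODE $y'\ge c\,y^{(p+1)/p}-C'$ with $C'$ depending only on $\io u_0+\io v_0$ through the mass bound, so that the smallness of $d_*(M)$ can be chosen afterwards depending on $M$ and $T$ only.
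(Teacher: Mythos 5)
The overall two-step architecture you propose — prove finite-time blow-up for the hyperbolic--hyperbolic--elliptic limit \eqref{HHE-LV} via a superlinear differential inequality for $y(t)=\io u^p+\io v^p$ and then argue by contradiction/compactness for small $d_1,d_2$ — is exactly the paper's approach, and your reading of the parameter conditions (i)--(iv) as a case analysis on whether the cross terms $\io u^pv$, $\io v^pu$ can be dropped or must be Young-estimated is also what the paper does (with the unweighted $y$; no weighted combination is needed). However, there is a genuine gap in your Step 2: you claim that $\|u\|_\infty,\|v\|_\infty\le M$ plus elliptic bounds on $w$ give, ``by parabolic regularity,'' $C^{2+\theta,1+\theta/2}$ bounds on $u,v$ \emph{uniform in $d_1,d_2$}. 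This is false: parabolic Schauder (or $L^p$-maximal-regularity) constants degenerate as the diffusivities $d_1,d_2\searrow 0$, so this route provides no compactness in the limit. The paper instead needs the substantial machinery of Section~\ref{W1,q-Linf}: the differential inequality of Lemma~\ref{control_grad} controls $\|\nabla u\|_{L^q}^q+\|\nabla v\|_{L^q}^q$ in terms of the $L^\infty$-norm, uniformly for $d_1,d_2\in(0,d_*)$, using (i) a boundary-integral estimate in the style of \cite{ISY-2014,LW-2017} to avoid convexity, and (ii) the BMO-estimate of Lemma~\ref{bmo:elli} together with the logarithmic Sobolev inequality of Lemma~\ref{log:Sobolev} to get an $L^\infty$ bound on $D^2w$ without radial symmetry. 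These give $W^{1,q}$ bounds uniform in small $d_1,d_2$; combined with $L^p$-in-time bounds on $u_t,v_t$ in a dual Sobolev space (Lemma~\ref{utvt}), an Aubin--Lions argument yields the needed compactness in $C^0([0,T];C^\sigma(\Ombar))$. Without this, the passage to the limit $d_1,d_2\to 0$ does not go through.

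A secondary concern is your treatment of the coefficient-$\gamma$ term $\io u^p w$. Your proposed maneuver — substituting $\gamma w = d_3\Delta w + \alpha u + \beta v$ back into $\io u^p w$ — is circular, since $\io u^p\Delta w$ is precisely the quantity one started from after integrating by parts in the chemotactic term. The paper's Lemma~\ref{blowup:pre} instead estimates $\io u^p w$ directly by H\"older, the embedding $W^{1,\widetilde p}\hookrightarrow L^{p+1}$, elliptic $W^{1,\widetilde p}$ regularity, interpolation between $L^1$ and $L^{p+1}$, and Young's inequality, absorbing the result into small multiples of $\io u^{p+1}$, $\io v^{p+1}$ plus powers of the (bounded) masses. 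This is exactly the improvement singled out in Remark~\ref{condition:remark} which removes $\gamma$ from the admissibility conditions; your sketch would not recover it. Finally, note that the integral inequality of Lemma~\ref{blowup:pre} cannot be obtained by literally testing with $u^{p-1}$, since the strong $W^{1,q}$-solution lacks time-regularity; one needs the regularized test function involving Steklov-type time averaging, as in \cite[Lemma 4.9]{L-2015}.
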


%----------------------------------------------------------------%
%
%%%%%%%%%%%%%%%%%%%%%%%%%%%%%%%%%%%%%%%%%%%%%%%%%%%%%%%%%%%%%%%%%%
%                                                 Remark                                            %
%%%%%%%%%%%%%%%%%%%%%%%%%%%%%%%%%%%%%%%%%%%%%%%%%%%%%%%%%%%%%%%%%%
%
\begin{remark}
If $a_1=a_2=d_3 = \chi_1 = \chi_2 = \alpha = \beta = \gamma = 1$,
then the conditions \eqref{condition:p:1}--\eqref{condition:p:4} 
can be reduced to
%
% \begin{align}\label{reduced}
%   (2p + 1)\mu_1 + \mu_2 < \frac{p^2 - 1}{p}
%   \quad\mbox{and}\quad
%   \mu_1 + (2p + 1)\mu_2 < \frac{p^2 - 1}{p},
% \end{align}
% this seems to be simpler: 
\begin{equation}\label{reduced}
 μ_1< \frac{p-1}{p} \quad\mbox{and}\quad
 μ_2< \frac{p-1}{p}
\end{equation}
which can be fulfilled for some $p>1$ if and only if
$\max\{μ_1,μ_2\}<1$.
We note that for $n \ge 3$, the conditions $\min\{μ_1,μ_2\}>\f{n-2}n$ for boundedness of solutions to \eqref{2sp-LV} are compatible with this requirement. Large densities as in Theorem~\ref{main_thm} hence have a chance to emerge
even when the solutions to \eqref{2sp-LV} are always global and bounded.
\end{remark}

%
%%%%%%%%%%%%%%%%%%%%%%%%%%%%%%%%%%%%%%%%%%%%%%%%%%%%%%%%%%%%%%%%%%
%                                                 Remark                                            %
%%%%%%%%%%%%%%%%%%%%%%%%%%%%%%%%%%%%%%%%%%%%%%%%%%%%%%%%%%%%%%%%%%
%
\begin{remark}\label{withoutp}
We note that there is $p \in(1,\infty) $ such that
\eqref{condition:p:1}, \eqref{condition:p:2}, \eqref{condition:p:3} or
\eqref{condition:p:4} holds if and only if one of the following conditions is fulfilled:
\begin{align}
&{\rm (i)} &
&\chi_1 > d_3 \mu_1 \max\left\{\dfrac{1}{\alpha},
\dfrac{a_1}{\beta}\right\}
\quad\mbox{and}\quad
\chi_2 > d_3\mu_2 \max\left\{\dfrac{1}{\beta},
\dfrac{a_2}{\alpha}\right\}, \label{condition:1}
\\[1mm]
&{\rm (ii)} &
&
\chi_1 > d_3 \mu_1 \max\left\{\dfrac{1}{\alpha},
\dfrac{a_1}{\beta}\right\},\quad
\chi_2 > \frac{d_3 \mu_2 (1 + a_2)}{\alpha + \beta}
\quad\mbox{and}\quad
\frac{a_2}{\alpha} > \frac{1}{\beta}, \label{condition:2}
\\[1mm]
&{\rm (iii)} &
&
\chi_1 > \frac{d_3 \mu_1 (1 + a_1)}{\alpha + \beta},\quad
\chi_2 > d_3\mu_2 \max\left\{\dfrac{1}{\beta},
\dfrac{a_2}{\alpha}\right\}
\quad\mbox{and}\quad
\frac{a_1}{\beta} > \frac{1}{\alpha}, \label{condition:3}
\\[1mm]
&{\rm (iv)} &
&
\chi_1 > \frac{d_3 \mu_1 (1 + a_1)}{\alpha + \beta},\quad
\chi_2 > \frac{d_3 \mu_2 (1 + a_2)}{\alpha + \beta}, \quad
\frac{a_1}{\beta} > \frac{1}{\alpha}
\quad\mbox{and}\quad
\frac{a_2}{\alpha} > \frac{1}{\beta}.\label{condition:4}
\end{align}
In particular, if
$a_1=a_2=d_3=\mu_1=\mu_2=\alpha=β=γ=1$, then
these conditions become $\chi_1 > 1$ and $\chi_2 > 1$.%
\end{remark}
%
%
%%%%%%%%%%%%%%%%%%%%%%%%%%%%%%%%%%%%%%%%%%%%%%%%%%%%%%%%%%%%%%%%%%
%                                                 Remark                                            %
%%%%%%%%%%%%%%%%%%%%%%%%%%%%%%%%%%%%%%%%%%%%%%%%%%%%%%%%%%%%%%%%%%
%
\begin{remark}\label{condition:remark}
The conditions \eqref{condition:p:1}--\eqref{condition:p:4} 
play an important rôle
on determining whether blow-up in \eqref{HHE-LV} occurs. 
In \cite[Theorem 1.2]{X-2020}, the corresponding conditions were
\begin{align}
  &\chi_1 >
  \frac{d_3 \mu_1 p (p + 1) + d_3 a_1 \mu_1 p^2 + d_3 a_2 \mu_2 p}
         {\alpha (p^2 - 1) - \gamma p (p-1)}
         \label{Xu1}
\\
    \intertext{and}
  &\chi_2 >
  \frac{d_3 \mu_2 p (p + 1) + d_3 a_2 \mu_2 p^2 + d_3 a_1 \mu_1 p}
         {\beta (p^2 - 1) - \gamma p (p-1)}.
         \label{Xu2}
\end{align}
Apparently, in Theorem~\ref{main_thm} smaller values of $χ_1$ and $χ_2$ (or larger values of $μ_1,μ_2$) can be chosen; also in the conditions on $p$, an upper bound is unnecessary in our theorem. For ease of comparison again turning to the setting of $a_1=a_2=d_3=χ_1=χ_2=α=β=γ=1$, we see that \eqref{Xu1} and \eqref{Xu2} become 
\[
 1-\f1p > μ_1+μ_2 +2p\max\{μ_1,μ_2\}
\]
(which at the very least necessitates $μ_1+μ_2<\max_{p>0} \f{p-1}{p^2+p} = \f{\sqrt{2}}{4+3\sqrt{2}}\approx 0{.}17$.)
These improvements will be achieved by changing the way of estimate
in Lemma~\ref{blowup:pre},
where we combine some inequalities to derive
\[
  \dfrac{\chi_1 \gamma (p-1)}{d_3 p} \int_0^t \io u^p w
  \le \eta \int_0^t \io u^{p+1} + \eta \int_0^t \io v^{p+1}
  + C(\eta)
\]
with sufficiently small $\eta > 0$,
so that we can remove $\gamma$
from \eqref{Xu1} and \eqref{Xu2}.
\end{remark}

\begin{remark}\label{remark:Xu-recent}
Very recently, in \cite{X-2023-2}, a related two-species system with signal loop was studied, where two species each produce a signal to which both may react. We denote all quantities from \cite{X-2023-2} with additional tildes. If $\tilde{σ}_1=\tilde{σ}_2=1$, \eqref{2sp-LV} is a special case of \cite[(3)]{X-2023-2} with the choices of $u=\tilde{u}$, $v=\tilde{w}$, $w=\tilde{v}+\f{\tilde{χ}_2}{\tilde{χ}_1}\tilde{z}$,
$\tilde{\chi}_1 = \chi_1$,
$\tilde{χ}_4=\chi_2$, $\tilde{χ}_3=χ_2\frac{\tilde{χ}_2}{\tilde{χ}_1}$,
$\tilde{\mu}_1 = \mu_1$, $\tilde{\mu}_2 = \mu_2$, $\tilde{a}_1 = a_1$,
$\tilde{a}_2 = a_2$, $d_3 = \beta = \gamma$ as well as
$\alpha = \gamma \frac{\tilde{\chi}_2}{\tilde{\chi}_1}$,
and the conditions on $\tilde{\chi}_2$ and $\tilde{\chi_4}$ that
ensure emergent large densities were
\begin{align}\label{con:loop}
    \tilde{\chi}_2 > \frac{p}{p-1}\left(\tilde{\mu}_1 + 
    \frac{\tilde{a}_1 \tilde{\mu}_1 p + \tilde{a}_2 \tilde{\mu}_2}
    {p + 1}\right)
      \quad\mbox{and}\quad
    \tilde{\chi}_4 > \frac{p}{p-1}\left(\tilde{\mu}_2 + 
    \frac{\tilde{a}_2 \tilde{\mu}_2 p + \tilde{a}_1 \tilde{\mu}_1}
    {p+1}\right)
\end{align}
with some $p > 1$
(see \cite[(15)]{X-2023-2}).
Again choosing the parameters as
$a_1=a_2=d_3=\mu_1=\mu_2=β=γ=1$, we see that then 
\eqref{con:loop} holds if and only if
\begin{equation}\label{conditionsXuSpecialcase}
    \tilde{\chi}_2 > 2
    \quad\mbox{and}\quad
    \tilde{\chi}_4 > 2,
\end{equation}
while \eqref{condition:1}--\eqref{condition:4} becomes
\begin{alignat}{2}%\label{ourcondition}
&{\rm (i)} \quad&
&\tilde{\chi}_2 > 1, \quad
\tilde{\chi}_4 > 1
\quad\mbox{and}\quad
\tilde{\chi}_2 \tilde{\chi}_4 > \tilde{\chi}_1 > 1,\label{ourcondition1}
\\
&{\rm (ii)} \quad&
&\tilde{\chi}_1 > \tilde{\chi}_2 > 1
\quad\mbox{and}\quad
\tilde{\chi}_4 > \frac{2\tilde{\chi}_1}{\tilde{\chi_1} + \tilde{\chi}_2},\label{ourcondition2}
\\
&{\rm (iii)} \quad&
&\tilde{\chi}_2 > 1,\quad
\tilde{\chi}_4 > 1,\quad
\tilde{\chi}_2 > \tilde{\chi}_1
\quad\mbox{and}\quad
\tilde{\chi}_2 + \tilde{\chi}_1 > 2,\label{ourcondition3}
%\tilde{\chi}_4 > \frac{p}{p-1}, \ \tilde{\chi}_3 > 1,
\end{alignat}
(whereas (iv) only ever can be satisfied for different parameter choices).
%while \eqref{con:loop} can be reduced to
%
%\begin{equation}\label{conditionsXuSpecialcase}
%    \tilde{\chi}_2 > \frac{2p}{p-1}
%    \quad\mbox{and}\quad
%    \tilde{\chi}_4 > \frac{2p}{p-1}.
%\end{equation}
%
These show that we (in \eqref{ourcondition1}--\eqref{ourcondition3}) can actually take $\tilde{\chi}_2$ and $\tilde{\chi}_4$ smaller than
in \cite{X-2023-2}, i.e. \eqref{conditionsXuSpecialcase}.
\end{remark}

%
%%%%%%%%%%%%%%%%%%%%%%%%%%%%%%%%%%%%%%%%%%%%%%%%%%%%%%%%%%%%%%%%%%
%                                                 Remark                                            %
%%%%%%%%%%%%%%%%%%%%%%%%%%%%%%%%%%%%%%%%%%%%%%%%%%%%%%%%%%%%%%%%%%
%
\begin{remark}
Following the arguments in \cite[Sections 3.2 and 4.6]{L-2015},
we can observe that the system \eqref{HHE-LV} admits a global solution
under the condition that
\[
    \frac{\chi_1}{\mu_1}
    \le d_3 \min\left\{\frac{1}{\alpha}, \frac{a_1}{\beta}\right\}
    \quad\mbox{and}\quad
    \frac{\chi_2}{\mu_2}
    \le d_3 \min\left\{\frac{1}{\beta}, \frac{a_2}{\alpha}\right\}.
\]
If $a_1=a_2=d_3 = \mu_1 = \mu_2 = \alpha = \beta = \gamma = 1$, this condition
can be reduced to $\chi_1, \chi_2 \le 1$.
In contrast, for the same parameters, the condition on $\chi_1$ and $\chi_2$
for blow-up in \eqref{HHE-LV} is $\chi_1 > 1$ and $\chi_2 > 1$
(see Remark~\ref{withoutp}). This suggests that the given condition for the possibility of blow-up in \eqref{HHE-LV}
and thus for large densities in \eqref{2sp-LV} is not too far from being sharp.
\end{remark}

%
%%%%%%%%%%%%%%%%%%%%%%%%%%%%%%%%%%%%%%%%%%%%%%%%%%%%%%%%%%%%%%%%%%
%                                                 Remark                                            %
%%%%%%%%%%%%%%%%%%%%%%%%%%%%%%%%%%%%%%%%%%%%%%%%%%%%%%%%%%%%%%%%%%
%
\begin{remark}
The condition for initial data \eqref{condition:initial}
seems to be a generalization of the one in \cite[Theorem 1.1]{L-2015}.
Indeed, if we choose $v_0 \equiv 0$, then \eqref{condition:initial}
can be reduced to
\[
  \left(\io u_0^p\right)^{\frac{1}{p}}
    > C(p) \max\left\{1, 
        \io u_0\right\},
\]
which is equivalent to the condition in \cite{L-2015} under $\kappa = \mu$.
\end{remark}

%
%%%%%%%%%%%%%%%%%%%%%%%%%%%%%%%%%%%%%%%%%%%%%%%%%%%%%%%%%%%%%%%%%%
%                                                 Remark                                            %
%%%%%%%%%%%%%%%%%%%%%%%%%%%%%%%%%%%%%%%%%%%%%%%%%%%%%%%%%%%%%%%%%%
%
\begin{remark}
In \cite{L-2015,X-2020}, initial data had to satisfy higher regularity assumptions, including a compatibility condition. This is no longer needed, see Section~\ref{subsec:locex} for a short discussion. 
\end{remark}

The proof of the main theorem is based on \cite{L-2015}. 
Following said work, we will consider the system \eqref{HHE-LV}
in $\Omega \times (0, T)$ for some $T > 0$,
and show existence of solutions to \eqref{HHE-LV}
which can be approximated by solutions of \eqref{2sp-LV}
(Lemma~\ref{exist:app}).
Then we will prove that
under some conditions for parameters and initial data,
any solution to \eqref{HHE-LV} blows up in finite time
(Lemma~\ref{blowup:result}).
From this, we can derive the main theorem
which tells that any thresholds of
the sum of population densities can be surpassed,
since the solutions of \eqref{HHE-LV} can be obtained as
limits of solutions to \eqref{2sp-LV}.

In order to establish blow-up result in the system \eqref{HHE-LV},
it will be important to obtain a blow-up criterion to \eqref{HHE-LV}.
Our second result is concerned with local well-posedness of \eqref{HHE-LV}.
%
%----------------------------------------------------------------%
%----------------------------------------------------------------%
%                                               Theorem 1.2                                             %
%----------------------------------------------------------------%
%----------------------------------------------------------------%
%
\begin{thm}\label{thm2}
Let $n\in ℕ$, and let $\Omega \subset \R^n$ be a bounded domain with smooth boundary.
Let $d_3, \chi_1, \chi_2, \mu_1, \mu_2 > 0$,
$a_1, a_2 \ge 0$,
$\alpha, \beta, \gamma > 0$,
and $q > \max\{n, 2\}$.
Then for all nonnegative functions $u_0, v_0 \in W^{1,q}(\Omega)$,
there are $\Tmax \in (0, \infty]$ and a uniquely determined triple $(u, v, w)$
of functions
\[
\begin{cases}
  u, v \in C^0(\Ombar \times [0, \Tmax)) \cap
     L^{\infty}_{{\rm loc}}([0, \Tmax); W^{1,q}(\Omega))
       \quad\mbox{and}
\\
   w \in C^{2,0}(\Ombar \times [0, \Tmax))
\end{cases}
\]   
such that $(u, v, w)$ is a $W^{1,q}$-solution of \eqref{HHE-LV} in
$\Omega \times (0, \Tmax)$ in the sense of Definition~\ref{df:W1q},
and which are such that
\begin{align}\label{thm:HHEcriterion}
  \mbox{either}\ \Tmax = \infty
  \quad\mbox{or}\quad
  \limsup_{t \nearrow \Tmax}
    (\|u(\cdot, t)\|_{L^{\infty}(\Omega)}
      + \|v(\cdot, t)\|_{L^{\infty}(\Omega)}) = \infty.
\end{align}
In addition, let $u_0, v_0 \in W^{1,q}(\Omega)$
be nonnegative functions, and let $(u_{0j})_{j \in \N} \subset W^{1,q}(\Omega)$,
$(v_{0j})_{j \in \N} \subset W^{1,q}(\Omega)$
be sequences of nonnegative functions such that
\begin{align}\label{ini:con}
    \|u_{0j} - u_0\|_{L^q(\Omega)} \to 0
    \quad\mbox{and}\quad
    \|v_{0j} - v_0\|_{L^q(\Omega)} \to 0
\end{align}
as $j \to \infty$.
Let $(u, v, w)$, $(u_j, v_j, w_j)$ be the $W^{1,q}$-solution of \eqref{HHE-LV}
in $\Omega \times (0, T)$ for some $T > 0$ with initial data $u_0, v_0$,
and $u_{0j}, v_{0j}$, respectively.
Then for all $t \in (0, T)$,
\begin{align}\label{sol:con}
    \|u_j(\cdot, t) - u(\cdot, t)\|_{L^q(\Omega)}
    + \|v_j(\cdot, t) - v(\cdot, t)\|_{L^q(\Omega)}
    + \|w_j(\cdot, t) - w(\cdot, t)\|_{W^{2,q}(\Omega)}
    \to 0
\end{align}
as $j \to \infty$.
\end{thm}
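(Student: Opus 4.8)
The plan is to construct the $W^{1,q}$-solution by a fixed-point argument built on the explicit structure of the elliptic third equation. First I would note that for given nonnegative $u,v\in C^0(\Ombar)\cap W^{1,q}(\Omega)$ the equation $0 = d_3\Delta w + \alpha u + \beta v - \gamma w$ with homogeneous Neumann data has a unique solution $w = \mathcal{S}(\alpha u + \beta v)$, where $\mathcal{S}=(-d_3\Delta+\gamma)^{-1}$ is a bounded linear operator from $L^q(\Omega)$ to $W^{2,q}(\Omega)$, hence (by Sobolev embedding, since $q>n$) from $L^q(\Omega)$ to $C^1(\Ombar)$. Substituting this into the first two equations turns \eqref{HHE-LV} into a closed system of two transport equations for $(u,v)$ with drift fields $\chi_1\nabla w$, $\chi_2\nabla w$ and the quadratic (in $(u,v)$) reaction terms. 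For such first-order transport problems with $W^{1,\infty}$ velocity field, one can solve along characteristics: define $X_i(t;x)$ by $\dot X_i = -\chi_i\nabla w(X_i,t)$, $X_i(0)=x$, and rewrite the PDEs for $u,v$ as ODEs along these flows (the reaction and the divergence-of-drift term $\chi_i u\,\Delta w$ enter as zeroth-order coefficients). The composite solution map $(u,v)\mapsto(\tilde u,\tilde v)$, where $w$ is reconstructed at each instant from the current $(u,v)$, is then shown to be a contraction on a suitable closed ball in $C^0([0,\tau];C^0(\Ombar))^2$ for small $\tau>0$, giving a unique local mild solution; nonnegativity is preserved because along characteristics $u,v$ satisfy linear ODEs with the quadratic terms acting as (sign-controlled) coefficients.

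The next step is to upgrade regularity and propagate the $W^{1,q}$ bound. Differentiating the transport equations (or differentiating the characteristic representation with respect to $x$) and using that $\nabla w\in C^1$ with $\|w\|_{W^{2,q}}\le C(\|u\|_{L^q}+\|v\|_{L^q})$ — and, by elliptic regularity bootstrapped with $q>n$, that $D^2 w$ is bounded in terms of $\|u\|_{C^0}+\|v\|_{C^0}$ along any interval where these stay finite — yields a Grönwall inequality for $\|u(\cdot,t)\|_{W^{1,q}}+\|v(\cdot,t)\|_{W^{1,q}}$ in terms of $\sup_{[0,t]}(\|u\|_{L^\infty}+\|v\|_{L^\infty})$. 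This simultaneously establishes $u,v\in L^\infty_{\mathrm{loc}}([0,\Tmax);W^{1,q}(\Omega))$, gives $w\in C^{2,0}(\Ombar\times[0,\Tmax))$ by the same elliptic estimate, and produces the extensibility criterion \eqref{thm:HHEcriterion}: the local existence time $\tau$ depends only on $\|u\|_{C^0}+\|v\|_{C^0}$, so if $\limsup_{t\nearrow\Tmax}(\|u(\cdot,t)\|_{L^\infty}+\|v(\cdot,t)\|_{L^\infty})<\infty$ the solution could be continued past $\Tmax$. One should double-check here that the Definition~\ref{df:W1q} notion of solution is exactly what the characteristic/mild formulation delivers after this regularity gain — i.e. that the mild solution is a genuine $W^{1,q}$-solution — which is a standard but slightly delicate equivalence.

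For the continuous-dependence assertion \eqref{sol:con}, I would take two initial data $u_0,v_0$ and $u_{0j},v_{0j}$ with the $L^q$-convergence \eqref{ini:con}, work on a common interval $[0,T]$ on which both solutions exist (guaranteed for $j$ large since the existence time is controlled by $L^\infty$-norms, which are in turn controlled on compact subintervals), and estimate the differences. Subtracting the two elliptic equations gives $\|w_j-w\|_{W^{2,q}}\le C(\|u_j-u\|_{L^q}+\|v_j-v\|_{L^q})$, hence $\|\nabla w_j-\nabla w\|_{L^\infty}$ is controlled by the same quantity. Subtracting the two transport equations, testing the difference against $|u_j-u|^{q-2}(u_j-u)$ (and likewise for $v$), and handling the drift-difference term via the $W^{2,q}$ bound on $w_j-w$ together with the a priori $W^{1,q}$ bounds on $u_j,v_j$, yields $\dt(\|u_j-u\|_{L^q}^q+\|v_j-v\|_{L^q}^q)\le C(\|u_j-u\|_{L^q}^q+\|v_j-v\|_{L^q}^q)$; Grönwall then gives \eqref{sol:con} for $u,v$, and the elliptic estimate transfers it to $w_j-w$ in $W^{2,q}$.

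The main obstacle I anticipate is the treatment of the first-order transport structure: unlike the parabolic system \eqref{2sp-LV}, \eqref{HHE-LV} has no diffusion in the $u,v$ equations, so there is no smoothing and the natural framework is characteristics with a $C^1$ (indeed $W^{2,q}\hookrightarrow C^{1}$) velocity field reconstructed nonlocally from $(u,v)$ at each time. Making the characteristic flow well-defined and Lipschitz, controlling its $x$-derivative (which feeds the $W^{1,q}$ estimate and requires $D^2w$, hence the full elliptic $W^{2,q}$ bound and $q>n$), and verifying that the resulting mild solution coincides with the $W^{1,q}$-solution of Definition~\ref{df:W1q} — all while keeping the quadratic reaction terms under control and preserving nonnegativity — is where the real work lies; the fixed-point contraction and the Grönwall arguments are then routine.
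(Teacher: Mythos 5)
The paper does not construct the $W^{1,q}$-solution of \eqref{HHE-LV} by a direct fixed-point along characteristics; it constructs it as the $d_1,d_2\to 0$ limit of classical solutions of the parabolic system \eqref{2sp-LV}, using the uniform-in-$(d_1,d_2)$ estimates of Section~3 (in particular Lemmas~\ref{control_grad}, \ref{kari}, \ref{2sp-enough}, \ref{utvt}) and the compactness/approximation lemmas \ref{bdd:app}, \ref{exist:app}, \ref{HHE-local}. Your proposal is a genuinely different route, and in principle a characteristics/fixed-point construction for a transport system with a nonlocally determined drift is a sensible alternative. The difficulty is that the central regularity claim on which your plan hinges is false, and the way it fails is precisely what forces the paper into the more elaborate BMO/logarithmic-Sobolev machinery.

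Concretely, you assert that ``by elliptic regularity bootstrapped with $q>n$, $D^2 w$ is bounded in terms of $\|u\|_{C^0}+\|v\|_{C^0}$''. This is not true: elliptic $L^\infty$-maximal regularity fails at the endpoint, as the paper notes just before Lemma~\ref{bmo:elli}. From $u,v\in L^\infty(\Omega)$ one obtains $w\in W^{2,q}(\Omega)$ for every finite $q$, hence $\nabla w\in C^{0,1-n/q}(\Ombar)$, but \emph{not} $D^2w\in L^\infty(\Omega)$; the best endpoint statement is $D^2w\in\mathrm{BMO}(\Omega)$ (Lemma~\ref{bmo:elli}). Consequently the drift $\chi_i\nabla w$ is only H\"older (in fact log-Lipschitz), not $W^{1,\infty}$, so the characteristic flow $\dot X=-\chi_i\nabla w(X,t)$ is not Lipschitz in $x$; uniqueness of characteristics survives via Osgood, but the flow map loses differentiability, and your step of ``differentiating the characteristic representation with respect to $x$'' to close a Gr\"onwall inequality for $\|u\|_{W^{1,q}}+\|v\|_{W^{1,q}}$ does not go through in the form you describe. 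The paper's actual estimate (Lemma~\ref{D2w}) controls $\|D^2w\|_{L^\infty}$ only up to a factor $\log(1+\|\nabla u\|_{L^q}^q+\|\nabla v\|_{L^q}^q)$, and the resulting differential inequality (Lemma~\ref{control_grad}) is of type $y'\le h(t)(e+y)\log(e+y)$, giving a double-exponential, not Gr\"onwall, bound (Lemma~\ref{kari}). Any direct fixed-point proof will have to reproduce this logarithmic structure; without it the $W^{1,q}$ propagation step has a genuine gap.

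Two further, smaller, points. First, even if one accepts the a priori $W^{1,q}$ bound, the equivalence between the characteristic (mild) solution and the weak $W^{1,q}$-solution of Definition~\ref{df:W1q} needs a real argument; you flag this yourself, but in the paper this is rendered unnecessary because the limit of classical parabolic solutions is verified directly to satisfy the weak identities \eqref{strong:u}--\eqref{strong:v}. Second, in the continuous-dependence step, testing $u_j-u$ against $|u_j-u|^{q-2}(u_j-u)$ requires a Steklov-averaging type justification for solutions that are merely weak in time; the paper delegates this to \cite{W-2014,L-2015} in Lemma~\ref{unique:HHE}. Your overall scheme for continuous dependence (elliptic estimate for $w_j-w$, then an $L^q$ energy estimate and Gr\"onwall) is the same as the paper's.
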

%----------------------------------------------------------------%
The criterion \eqref{thm:HHEcriterion} will be discussed in Lemma~\ref{HHE-local}.
To achieve this,
we aim to derive important estimates of solutions to \eqref{2sp-LV}
in Section~\ref{W1,q-Linf},
which shows that $W^{1,q}$-norm of solutions can be controlled by
their $L^{\infty}$-norm.
Here, the difficulties of the non-symmetric case appeared
(Lemma~\ref{control_grad});
we have to treat integrals like below:
\begin{align}\label{diff_int}
    \io |∇u|^{q-2} ∇u \cdot ∇ \Delta u
  \quad\mbox{and}\quad
    \io |∇u|^{q-2} ∇u \cdot ∇(∇u \cdot ∇w),
\end{align}
which arise from the diffusion term $\Delta u$
and the chemotactic term $-∇ \cdot(u∇w)$, respectively.
In \cite{L-2015, X-2020},
the estimates of corresponding terms heavily rely on radial symmetric
of solutions,
and thus we have to modify their proofs.
The keys of the estimates are
a result on the boundary integration which was
established in
\cite{ISY-2014, LW-2017},
and some useful inequalities related to BMO theory
from \cite{KT-2000, OT-2003, S-1993, WYW-2006}. 
For the non-symmetric case of the two-species two-stimuli chemotaxis system in \cite{X-2023-2}, the method of the proof is similar; 
for a comparison of results see Remark~\ref{remark:Xu-recent}.
%
%%%%%%%%%%%%%%%%%%%%%%%%%%%%%%%%%%%%%%%%%%%%%%%%%%%%%%%%%%%%%%%%%%
%                                                 Remark                                            %
%%%%%%%%%%%%%%%%%%%%%%%%%%%%%%%%%%%%%%%%%%%%%%%%%%%%%%%%%%%%%%%%%%
%
\begin{remark}
In \cite{KS-2016}, convexity of the domain was required in order to
estimate the boundary integral, which comes from the first integral in \eqref{diff_int},
as
\begin{align*}
  \int_{\pa\Omega} |∇u|^{q-2}(\nabla |\nabla u|^2 \cdot \nu) \le 0.
\end{align*}
In contrast, we will follow arguments as in \cite{ISY-2014}
to get rid of convexity.
As was mentioned in \cite[Remark 2]{KS-2016},
we do not have a uniform bound on $\| |∇u|^q \|_{L^{\frac{s}{q}}(\Omega)}$
for $\frac{s}{q} > 1$, however,
we use $\| |∇u|^q \|_{L^{\frac{2}{q}}(\Omega)}$ instead so that
we can derive an differential inequality for
$\| ∇u \|_{L^{q}(\Omega)}^q$.
This is indeed possible by choosing $a \in (0,1)$ large enough in
\cite[Lemma 2.5]{ISY-2014}.
The result of this observation was also obtained in \cite[Lemma 2.1.\ c)]{LW-2017}.
\end{remark}

This paper is organized as follows.
In Section~\ref{Pre}, we give some properties of elliptic problems,
introduce some facts related to BMO,
and prepare an estimate (Lemma~\ref{D2w})
which helps to derive $W^{1,q}$-estimates for solutions
of \eqref{2sp-LV} afterward
without symmetry assumptions.
In Section~\ref{PPE}, we will focus on existence of solutions
to \eqref{2sp-LV} and estimates
until a common existence time (Lemma~\ref{2sp-enough}),
and prepare some estimates for compactness arguments
(Lemma~\ref{utvt}).
Section~\ref{HHE} is devoted to definition, local existence,
estimates
for solutions to \eqref{HHE-LV},
and proving Theorem~\ref{thm2}.
Finally, in Section~\ref{conclude},
we consider blow-up of solutions to \eqref{HHE-LV},
and prove Theorem~\ref{main_thm}.

%\newpage
%%==============================================================%%
%%==============================================================%%
%%                                               Section 2                                              %%
%%                                            Preliminaries                                           %%
%%==============================================================%%
%%==============================================================%%
\section{Preliminaries}\label{Pre}

We recall some estimates for solutions of elliptic problems.
In Lemma~\ref{D2w}, they will culminate in the $L^\infty$-estimate for the second derivative of solutions $w$ of \eqref{2sp-LV} and \eqref{HHE-LV}. We begin, however, with a different simple $L^p$-estimate:
%
%%%%%%%%%%%%%%%%%%%%%%%%%%%%%%%%%%%%%%%%%%%%%%%%%%%%%%%%%%%%%%%%%%
%                                                 Lemma                                             %
%%%%%%%%%%%%%%%%%%%%%%%%%%%%%%%%%%%%%%%%%%%%%%%%%%%%%%%%%%%%%%%%%%
%
\begin{lem}\label{Lp-elli}
Let $\Omega \subset \R^n$ $(n \in \N)$ be a bounded domain with smooth boundary.
Let $u, v \in L^{\infty}(\Omega)$,
$\alpha, \beta, \gamma > 0$
and $w \in C^2(\Ombar)$ be a nonnegative classical solution of
\begin{align}\label{pr:elli}
  \begin{cases}
    - \Delta w + \gamma w = \alpha u + \beta v
    & \mbox{in}\ \Omega,
  \\
    ∇ w \cdot \nu = 0
    & \mbox{in}\ \pa\Omega.
  \end{cases}
\end{align}
Then for all $p \in [1, \infty]$,
\[
  \|w \|_{L^p(\Omega)}
  \le \dfrac{\alpha}{\gamma} \|u \|_{L^p(\Omega)}
   + \dfrac{\beta}{\gamma} \|v\|_{L^p(\Omega)}.
\]
\end{lem}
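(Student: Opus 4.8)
The plan is to test the elliptic equation against a suitable power of $w$ and integrate over $\Omega$, exploiting the homogeneous Neumann boundary condition to discard the boundary term with a favourable sign. Concretely, for $p \in (1,\infty)$ I would multiply the identity $-\Delta w + \gamma w = \alpha u + \beta v$ by $w^{p-1}$ (note $w \ge 0$, so this is well defined) and integrate. Integration by parts gives
\[
  \io ∇ w \cdot ∇ (w^{p-1}) + \gamma \io w^p
  = \io (\alpha u + \beta v) w^{p-1},
\]
where the boundary contribution $\int_{\pa\Omega} w^{p-1} ∇ w \cdot \nu$ vanishes because of the Neumann condition in \eqref{pr:elli}. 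Since $∇(w^{p-1}) = (p-1) w^{p-2} ∇ w$, the first term equals $(p-1)\io w^{p-2}|∇ w|^2 \ge 0$ and may simply be dropped. Hence $\gamma \io w^p \le \io (\alpha u + \beta v) w^{p-1}$.

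Next I would apply Hölder's inequality to the right-hand side with exponents $p$ and $\frac{p}{p-1}$, which yields
\[
  \gamma \|w\|_{L^p(\Omega)}^p
  \le \|\alpha u + \beta v\|_{L^p(\Omega)} \, \|w^{p-1}\|_{L^{\frac{p}{p-1}}(\Omega)}
  = \|\alpha u + \beta v\|_{L^p(\Omega)} \, \|w\|_{L^p(\Omega)}^{p-1}.
\]
If $\|w\|_{L^p(\Omega)} = 0$ the claimed bound is trivial; otherwise I divide by $\|w\|_{L^p(\Omega)}^{p-1}$ to obtain $\gamma \|w\|_{L^p(\Omega)} \le \|\alpha u + \beta v\|_{L^p(\Omega)}$, and then the triangle inequality in $L^p(\Omega)$ gives $\|w\|_{L^p(\Omega)} \le \frac{\alpha}{\gamma}\|u\|_{L^p(\Omega)} + \frac{\beta}{\gamma}\|v\|_{L^p(\Omega)}$, as desired.

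The two remaining endpoint cases are handled separately but are routine. For $p = \infty$, I would use a comparison/maximum-principle argument: the constant function $\bar w := \frac{1}{\gamma}(\alpha\|u\|_{L^\infty(\Omega)} + \beta\|v\|_{L^\infty(\Omega)})$ is a supersolution of \eqref{pr:elli}, since $-\Delta \bar w + \gamma \bar w = \alpha\|u\|_{L^\infty} + \beta\|v\|_{L^\infty} \ge \alpha u + \beta v$, so the elliptic maximum principle (with the Neumann condition, which $\bar w$ satisfies trivially) gives $w \le \bar w$ on $\Ombar$; a lower bound is not needed since $w \ge 0$ is assumed. For $p = 1$, one can either let $p \to 1^+$ in the inequality just proved, using that $\|w\|_{L^p(\Omega)} \to \|w\|_{L^1(\Omega)}$ on the bounded domain $\Omega$, or simply integrate the equation directly: $\gamma \io w = \io(\alpha u + \beta v) - \io \Delta w = \io (\alpha u + \beta v)$, the Laplacian term vanishing by the divergence theorem and the Neumann condition, which immediately gives the $p=1$ estimate. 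There is no serious obstacle here; the only point requiring a little care is justifying the integrations by parts, which is legitimate because $w \in C^2(\Ombar)$ and $u, v \in L^\infty(\Omega)$ so every integrand is integrable on the bounded domain $\Omega$.
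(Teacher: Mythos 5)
Your argument for $p \in (1,\infty)$ — testing the equation against $w^{p-1}$, discarding the nonnegative gradient term after integrating by parts, and applying H\"older and the triangle inequality — is exactly the paper's proof. The only divergence is in the endpoint cases: the paper obtains $p=1$ and $p=\infty$ by letting $p\to 1^+$ and $p\to\infty$ in the estimate just established, whereas you handle $p=\infty$ independently via a constant supersolution and the Neumann maximum principle (and give both a limit and a direct-integration argument for $p=1$). Both routes are correct, so this is essentially the same approach with a slightly more self-contained treatment of the endpoints.
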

%%%%%%%%%%%%%%%%%%%%%%%%%%%%%%%%%%%%%%%%%%%%%%%%%%%%%%%%%%%%%%%%%%
%------------------------------proof------------------------------%
\begin{proof}
We test the equation by $w^{p-1}$
to obtain the estimate for $p \in (1, \infty)$.
Then we can extend the result to $p\in\set{1, \infty}$
by taking the limits $p \to 1$ and $p \to \infty$, respectively.
\end{proof}
%-----------------------------------------------------------------%

We next recall maximal regularity for elliptic problems,
which makes it possible to derive an estimate
for the derivatives of $w$ in terms of $u$ and $v$.

%
%%%%%%%%%%%%%%%%%%%%%%%%%%%%%%%%%%%%%%%%%%%%%%%%%%%%%%%%%%%%%%%%%%
%                                                 Lemma                                             %
%%%%%%%%%%%%%%%%%%%%%%%%%%%%%%%%%%%%%%%%%%%%%%%%%%%%%%%%%%%%%%%%%%
%
\begin{lem}\label{max_reg:elli}
Let $\Omega \subset \R^n$ $(n \in \N)$ be a bounded domain with smooth boundary.
Let $\alpha, \beta, \gamma > 0$.
For any $q > 1$ and $\sigma > 0$,
there exists a constant $C > 0$ such that
the following hold\/{\rm :}
\begin{enumerate}[{\rm (i)}]
%%%%%%%%%%%%%%%%%%%%%%%%%%%%%%%%
\item For every $u, v \in L^{\infty}(\Omega)$
every classical solution $w \in C^2(\Ombar)$
of \eqref{pr:elli} satisfies
\[
    \|w\|_{W^{2,q}(\Omega)}
    \le C(\|u\|_{L^q(\Omega)} + \|v\|_{L^q(\Omega)}).
\]
In addition, if $u, v \in W^{1,q}(\Omega)$, then
\[
  \|w\|_{W^{3,q}(\Omega)}
  \le C(\|u\|_{W^{1,q}(\Omega)} + \|v\|_{W^{1,q}(\Omega)}).
\]
%
%%%%%%%%%%%%%%%%%%%%%%%%%%%%%%%%
\item For every $u, v \in C^{\sigma}(\Ombar)$
any classical solution
$w \in C^{2+\sigma}(\Ombar)$ of \eqref{pr:elli} satisfies
\[
    \|w\|_{C^{2+\sigma}(\Omega)}
    \le C(\|u\|_{C^{\sigma}(\Omega)} + \|v\|_{C^{\sigma}(\Omega)}).
\]
\end{enumerate}
\end{lem}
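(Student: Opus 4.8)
The statement collects standard elliptic regularity estimates for the shifted Neumann Laplacian $-\Delta + \gamma$ on the smooth bounded domain $\Omega$; since $\gamma > 0$, this operator (with homogeneous Neumann boundary condition) is invertible, and the plan is to reduce each assertion to a textbook a priori bound combined with the triangle inequality. In all cases we write $f := \alpha u + \beta v$ for the right-hand side and estimate $\|f\|_X \le \alpha\|u\|_X + \beta\|v\|_X$ in the relevant norm $X \in \{L^q(\Omega), W^{1,q}(\Omega), C^\sigma(\Ombar)\}$.

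For the first estimate in part (i), $L^q$-theory for second-order elliptic equations with the (complementing) Neumann boundary condition on a smooth domain — in the Agmon--Douglis--Nirenberg framework — provides a constant $C > 0$ such that every $w \in C^2(\Ombar)$ solving \eqref{pr:elli} satisfies $\|w\|_{W^{2,q}(\Omega)} \le C\|-\Delta w + \gamma w\|_{L^q(\Omega)} = C\|f\|_{L^q(\Omega)}$, where the lower-order term usually appearing on the right is absorbed using invertibility of $-\Delta+\gamma$. Inserting the bound for $\|f\|_{L^q(\Omega)}$ gives the claim. For the $W^{3,q}$-estimate one uses the same theory one order higher: on a smooth domain one has $\|w\|_{W^{3,q}(\Omega)} \le C\bigl(\|f\|_{W^{1,q}(\Omega)} + \|w\|_{L^q(\Omega)}\bigr)$, and the last term is controlled by the already established $W^{2,q}$-bound (or directly by invertibility). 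Concretely this can be seen by localizing near $\partial\Omega$ and flattening the boundary: tangential derivatives of $w$ again solve a Neumann problem whose right-hand side lies in $L^q$ (the extra terms coming from the curvature of $\partial\Omega$ and from commutators with the flattening diffeomorphism are of lower order), while the normal second derivative is recovered algebraically from the equation, so one more application of the $W^{2,q}$-estimate closes the argument. When $u, v \in W^{1,q}(\Omega)$ we have $f \in W^{1,q}(\Omega)$ and the triangle inequality finishes the proof.

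For part (ii), the Schauder estimate for the Neumann problem on a smooth domain (e.g. in Gilbarg--Trudinger or Lunardi) yields $\|w\|_{C^{2+\sigma}(\Ombar)} \le C\bigl(\|f\|_{C^\sigma(\Ombar)} + \|w\|_{C^0(\Ombar)}\bigr)$ for every classical solution $w \in C^{2+\sigma}(\Ombar)$ of \eqref{pr:elli}. The remaining $C^0$-term is handled by the $p = \infty$ case of Lemma~\ref{Lp-elli}, which gives $\|w\|_{L^\infty(\Omega)} \le \frac{\alpha}{\gamma}\|u\|_{L^\infty(\Omega)} + \frac{\beta}{\gamma}\|v\|_{L^\infty(\Omega)} \le C\bigl(\|u\|_{C^\sigma(\Ombar)} + \|v\|_{C^\sigma(\Ombar)}\bigr)$ (alternatively, $\|w\|_{C^0}$ can be bounded directly via the maximum principle applied to $\pm w$). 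Combining this with the Schauder bound and the estimate for $\|f\|_{C^\sigma(\Ombar)}$ produces the asserted inequality.

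The only genuinely delicate point is the $W^{3,q}$-estimate, where the homogeneous Neumann condition is not preserved under differentiation and must be dealt with via the localization-and-flattening procedure sketched above (or by quoting a higher-order Neumann regularity result directly); the rest is a routine bookkeeping application of the classical $L^q$- and Schauder estimates together with the triangle inequality and Lemma~\ref{Lp-elli}.
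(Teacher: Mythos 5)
Your proposal is correct and follows essentially the same route as the paper, which simply cites the classical references (Agmon--Douglis--Nirenberg, Theorem 15.2, and Gilbarg--Trudinger, Theorem 6.30) for exactly the $L^q$- and Schauder estimates for the Neumann problem that you sketch, with the lower-order terms removed via the $\gamma>0$ zeroth-order term (equivalently, Lemma~\ref{Lp-elli}). Your additional detail on the $W^{3,q}$-step (tangential differentiation after flattening, recovering the normal derivative from the equation) is just an unpacking of the higher-order ADN estimate the paper quotes.
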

%%%%%%%%%%%%%%%%%%%%%%%%%%%%%%%%%%%%%%%%%%%%%%%%%%%%%%%%%%%%%%%%%%
%------------------------------proof------------------------------%
\begin{proof}
For the proof,
see \cite[Theorem 15.2]{ADN-1959} and
\cite[Theorem 6.30]{GT-2001}
(see also \cite{F-1976}).
\end{proof}
%-----------------------------------------------------------------%

The maximal regularity (Lemma~\ref{max_reg:elli}) is known to fail
for $q = \infty$,
so we cannot derive an $L^{\infty}$-estimate
of the second derivatives of $w$ from this lemma.
This may cause problems when we consider
the case of functions without radial symmetry,
especially in Section~\ref{W1,q-Linf},
where we have to deal with the term $D^2 w$.

To conquer this difficulty
and extend the result to the non-symmetric case,
the following lemma is very important. 
It has been used in \cite[Lemma 6]{KS-2016} for related models.

%
%%%%%%%%%%%%%%%%%%%%%%%%%%%%%%%%%%%%%%%%%%%%%%%%%%%%%%%%%%%%%%%%%%
%                                                 Lemma                                             %
%%%%%%%%%%%%%%%%%%%%%%%%%%%%%%%%%%%%%%%%%%%%%%%%%%%%%%%%%%%%%%%%%%
%
\begin{lem}\label{bmo:elli}
Let $\Omega \subset \R^n$ $(n \in \N)$ be a bounded domain with smooth boundary.
Let $\alpha > 0$ and $\beta \ge 0$.
Then there is $C > 0$ such that
for every $f \in L^{\infty}(\Omega)$
any weak solution $\phi \in W^{1,2}(\Omega)$ of
\[
    \begin{cases}
    - \Delta \phi + \alpha \phi = \beta f
    & \mbox{in}\ \Omega,
  \\
    ∇ \phi \cdot \nu = 0
    & \mbox{in}\ \pa\Omega
  \end{cases}
\]
satisfies
\[
  \| D^2 \phi \|_{{\rm BMO(\Omega)}} \le C\|f\|_{L^{\infty}(\Omega)}.
\]
\end{lem}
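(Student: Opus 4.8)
The idea is to reduce the claim to the classical fact that the Hessian of the Newtonian potential of an $L^{\infty}$ density belongs to $\mathrm{BMO}$ with comparable norm (see \cite{S-1993}; this is part of the circle of ideas underlying \cite{KT-2000,OT-2003,WYW-2006}), the only genuine extra point being the presence of a boundary. We may assume $\beta > 0$, since for $\beta = 0$ the homogeneous Neumann problem forces $\phi \equiv 0$. Testing the equation with $|\phi|^{p-2}\phi$ and letting $p \to \infty$, just as in the proof of Lemma~\ref{Lp-elli}, yields $\|\phi\|_{L^{\infty}(\Omega)} \le \tfrac{\beta}{\alpha}\|f\|_{L^{\infty}(\Omega)}$. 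Consequently $g := \beta f - \alpha\phi \in L^{\infty}(\Omega)$ with $\|g\|_{L^{\infty}(\Omega)} \le 2\beta\|f\|_{L^{\infty}(\Omega)}$, and $\phi$ is a weak solution of the Poisson--Neumann problem $-\Delta\phi = g$ in $\Omega$, $\nabla\phi\cdot\nu = 0$ on $\pa\Omega$. By standard elliptic $L^p$-regularity (cf. Lemma~\ref{max_reg:elli}) we get $\phi \in W^{2,p}(\Omega)$ for every finite $p$, hence $\phi \in C^{1,\sigma}(\Ombar)$ for some $\sigma \in (0,1)$ and $\|D^2\phi\|_{L^2(\Omega)} \le C\|f\|_{L^{\infty}(\Omega)}$; the Hölder regularity of $\phi$ is what will make the reflection step legitimate.

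I would then argue locally. An elementary covering argument reduces the $\mathrm{BMO}(\Omega)$ bound to a uniform bound on the mean oscillation of the matrix $D^2\phi$ over all sufficiently small balls meeting $\Omega$, together with the $L^1(\Omega)$-bound on $D^2\phi$ already available. At an interior point one splits $\phi = \phi_1 + \phi_2$ on a ball $B$ with $\overline{B}\subset\Omega$, where $\phi_1$ is the Newtonian potential of $g\mathbf{1}_B$ and $\phi_2$ is harmonic in $B$; the quoted estimate bounds the mean oscillation of $D^2\phi_1$ over subballs by $C\|g\|_{L^{\infty}(\Omega)}$ (and, more elementarily, $\|D^2\phi_1\|_{L^2} \le C\|g\|_{L^2}$ already controls its $L^1$-averages), while interior derivative estimates for the harmonic functions $\pa_i\pa_j\phi_2$ give the usual Campanato-type decay of their mean oscillation on shrinking balls; the standard iteration then controls the mean oscillation of $D^2\phi$ near that point by $C\bigl(\|g\|_{L^{\infty}(\Omega)} + \|D^2\phi\|_{L^2(\Omega)}\bigr) \le C\|f\|_{L^{\infty}(\Omega)}$. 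At a boundary point one straightens $\pa\Omega$ by a $C^{\infty}$ diffeomorphism, under which the Neumann condition becomes a conormal condition, and extends the $C^{1,\sigma}$ solution by even reflection across the flattened boundary; the reflected function solves, on a full ball, a uniformly elliptic equation with $C^{\sigma}$ coefficients and $L^{\infty}$ right-hand side, to which the interior scheme applies once the coefficients are frozen at the centre and the resulting small perturbation, together with the lower-order terms, is absorbed (the a priori finiteness needed for the absorption being available from the $W^{2,p}$ bounds and the interior estimate). A finite cover of $\Ombar$ by such interior and boundary balls then gives the desired uniform mean-oscillation bound, and hence the assertion; this is exactly the argument carried out in \cite[Lemma~6]{KS-2016}.

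The main obstacle is precisely the boundary. Unlike $L^p$ bounds, $\mathrm{BMO}$ bounds do not interpolate and cannot be extracted from the $W^{2,p}$ estimates of Lemma~\ref{max_reg:elli} (whose constants degenerate as $p \to \infty$), so the flattening-and-reflection must be carried out in such a way that the $\mathrm{BMO}$ bound genuinely survives it -- in particular one must verify that the terms produced by the variable coefficients after straightening are truly of perturbative size or of lower order, so that no second $\mathrm{BMO}$ estimate has to be produced from scratch. This is where the smoothness of $\pa\Omega$ enters, guaranteeing $C^{\infty}$ flattening maps and hence $C^{\sigma}$ coefficients after reflection; we emphasise that the convexity of $\Omega$ used elsewhere in \cite{KS-2016} is needed only for the boundary integral arising from the first term in \eqref{diff_int}, not for the present lemma.
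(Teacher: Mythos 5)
Your argument is correct and takes essentially the same route as the paper: its proof is a one-line pointer to Campanato-space regularity \cite[Propositions~9.1.1 and 9.1.2]{WYW-2006} together with the embedding $\mathcal{L}^{2,n}(\Omega)\hookrightarrow\mathrm{BMO}(\Omega)$, or alternatively to the singular-integral estimate in \cite[p.~178]{S-1993} as exploited in \cite[Lemma~6]{KS-2016}. What you have written is simply the argument behind those citations spelled out (Newtonian-potential decomposition plus harmonic decay in the interior, flattening and even reflection at the boundary, then a finite covering), so there is no substantive difference in method.
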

\begin{proof}
Regularity estimates in the Campanato space $\mathcal{L}^{2,n}(\Om)$ (like \cite[Propositions 9.1.1 and 9.1.2]{WYW-2006}) are combined with the embedding $\mathcal{L}^{2,n}(\Omega) \hookrightarrow {\rm BMO}(\Omega)$
(cf.~\cite[p.~259]{WYW-2006}); alternatively, mapping properties of the singular integral operator mapping $f$ to the second derivative of $\phi$
(\cite[p.~178, ${6.3}^a$]{S-1993})
can be exploited
(cf.~\cite[Lemma 6]{KS-2016}).
\end{proof}

%-----------------------------------------------------------------%

In order to use Lemma~\ref{bmo:elli},
the following logarithmic Sobolev inequality
established in \cite{OT-2003} is also useful.

%
%%%%%%%%%%%%%%%%%%%%%%%%%%%%%%%%%%%%%%%%%%%%%%%%%%%%%%%%%%%%%%%%%%
%                                                 Lemma                                             %
%%%%%%%%%%%%%%%%%%%%%%%%%%%%%%%%%%%%%%%%%%%%%%%%%%%%%%%%%%%%%%%%%%
%
\begin{lem}\label{log:Sobolev}
(Cf. {\rm \cite[Lemma 2.8]{OT-2003}},
see also {\rm \cite[Theorem 1]{KT-2000}})
Let $\Omega \subset \R^n$ $(n \in \N)$ be a bounded domain with smooth boundary.
For any $q > n$, there exists a constant $C > 0$ such that
\[
    \|f\|_{L^{\infty}(\Omega)}
    \le C\big(1 + \|f\|_{{\rm BMO(\Omega)}}
      (1 + \log^{+} \|f\|_{W^{1,q}(\Omega)})\big)
\]
for all $f \in W^{1,q}(\Omega)$,
where $\log^{+} z = z$ for $z > 1$ and $\log^{+} z = 0$ for otherwise.
\end{lem}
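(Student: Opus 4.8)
\textbf{Proof proposal for Lemma~\ref{log:Sobolev}.}

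The plan is to obtain this by combining an interpolation inequality of Brezis--Gallouet type (controlling $\|f\|_{L^\infty}$ by $\|f\|_{\mathrm{BMO}}$ and the logarithm of a higher Sobolev norm) with a standard truncation/mean-oscillation argument. First I would reduce to the case $\io f = 0$: replacing $f$ by $f - \f1{|\Om|}\io f$ changes neither $\|f\|_{\mathrm{BMO}(\Om)}$ nor the relevant seminorm of $\nabla f$, and since $|\f1{|\Om|}\io f| \le C\|f\|_{\mathrm{BMO}(\Om)}$ (the mean lies within one oscillation of $f$ on a fixed reference ball, using boundedness of $\Om$), the estimate for the mean-zero representative implies the estimate for $f$ up to adjusting the constant.

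The core step is to prove, for mean-zero $f \in W^{1,q}(\Om)$ with $q > n$, the inequality
\[
  \|f\|_{L^\infty(\Om)} \le C\,\|f\|_{\mathrm{BMO}(\Om)}\,\bigl(1 + \log^+ \|f\|_{W^{1,q}(\Om)}\bigr) + C.
\]
For this I would pass to a ball $B \supset \Om$ via a bounded Sobolev/BMO extension operator (possible because $\partial\Om$ is smooth), so that one works on all of $\R^n$ or a cube with periodic structure. Then the standard route is via the John--Nirenberg inequality together with the Littlewood--Paley / dyadic decomposition $f = \sum_{j} f_j$: the BMO norm controls $\sup_j \|f_j\|_{L^\infty}$ up to a constant, while the $W^{1,q}$ norm ($q>n$, so $\dot W^{1,q} \hookrightarrow \dot C^{0,1-n/q}$) controls the tail $\sum_{j \ge J} \|f_j\|_{L^\infty}$ by a geometric series in $2^{-J(1-n/q)}\|f\|_{W^{1,q}}$; summing the first $J$ terms costs a factor $J$ against the BMO bound. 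Optimizing over $J \in \N$ — choosing $J \sim \log^+\|f\|_{W^{1,q}}$ — yields exactly the claimed logarithmic dependence. Alternatively, and more in the spirit of the cited references, one can test directly: cover $\Om$ by balls, bound $|f(x) - f_{B_r(x)}|$ using the BMO seminorm and a telescoping sum over dyadic radii down to scale $\rho$, bound $f_{B_r(x)}$ for $r$ comparable to $\diam\Om$ by $C\|f\|_{\mathrm{BMO}}$ using mean-zero-ness, and control $|f(x) - f_{B_\rho(x)}|$ by $C\rho^{1-n/q}\|\nabla f\|_{L^q}$; choosing $\rho$ so that $\rho^{1-n/q}\|\nabla f\|_{L^q} \le 1$ (i.e. $\log\f1\rho \sim \log^+\|f\|_{W^{1,q}}$) makes the number of dyadic scales $\sim \log^+\|f\|_{W^{1,q}}$, giving the result. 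Since this is precisely \cite[Lemma~2.8]{OT-2003} and \cite[Theorem~1]{KT-2000}, I would ultimately cite those works; but the sketch above indicates how it is obtained.

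The main obstacle is purely technical rather than conceptual: one must carry out the dyadic/telescoping argument cleanly near $\partial\Om$, which is why the bounded extension (using smoothness of the boundary) is invoked at the outset so that all estimates can be performed on balls fully contained in a Euclidean domain; and one must keep track of the additive constant $C$ (coming from the low-frequency part and the passage to mean zero) so that the final bound has the stated affine-in-$\mathrm{BMO}$ form $C(1 + \|f\|_{\mathrm{BMO}}(1+\log^+\|f\|_{W^{1,q}}))$ rather than a purely multiplicative one. In practice, for the purposes of this paper it suffices to quote \cite[Lemma~2.8]{OT-2003} and \cite[Theorem~1]{KT-2000} verbatim, which is what the proof in the excerpt does.
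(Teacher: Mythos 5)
Your proposal is correct and ultimately lands where the paper does: the paper's proof consists precisely of citing \cite[Lemma 2.8]{OT-2003} (stated there for $n=3$) and remarking that the same argument works for every $n\in\mathbb{N}$ once the H\"older exponent $\gamma$ is replaced by $1-\frac{n}{q}$ — exactly the exponent driving your telescoping/dyadic sketch. Your additional outline of how the cited inequality is proved is consistent with that argument (modulo the implicit convention that $\|\cdot\|_{{\rm BMO}(\Omega)}$ contains a zero-order part, without which the reduction $|\frac{1}{|\Omega|}\int_\Omega f|\le C\|f\|_{{\rm BMO}(\Omega)}$ — and indeed the lemma itself, tested on constants — would fail), so no essential gap remains.
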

%%%%%%%%%%%%%%%%%%%%%%%%%%%%%%%%%%%%%%%%%%%%%%%%%%%%%%%%%%%%%%%%%%
%------------------------------proof------------------------------%
\begin{proof}
When $n = 3$, this is already proved in \cite[Lemma 2.8]{OT-2003}.
Copying the proof in \cite{OT-2003} with replacing $\gamma$ by $1 - \frac{n}{q}$,
we can see this actually holds for any $n \in \N$.
\end{proof}
%-----------------------------------------------------------------%

Now, we end this section by stating the follow result
as an application of
Lemmas~\ref{max_reg:elli}, \ref{bmo:elli} and \ref{log:Sobolev},
which allows us to estimate the term $D^2 w$ for solutions $w$ of \eqref{pr:elli}.

%
%%%%%%%%%%%%%%%%%%%%%%%%%%%%%%%%%%%%%%%%%%%%%%%%%%%%%%%%%%%%%%%%%%
%                                                 Lemma                                             %
%%%%%%%%%%%%%%%%%%%%%%%%%%%%%%%%%%%%%%%%%%%%%%%%%%%%%%%%%%%%%%%%%%
%
\begin{lem}\label{D2w}
Let $\Omega \subset \R^n$ $(n \in \N)$ be a bounded domain with smooth boundary.
For any $q > n$
and $K > 0$ there is $C  (= C(q, K)) > 0$ such that
for every $u, v \in W^{1,q}(\Omega)$
satisfying $\norm[\Lom1]{u}\le K$ and $\norm[\Lom1]{v}\le K$,
every classical solution $w \in C^2(\Ombar)$ of \eqref{pr:elli}
satisfies
\[
    \|D^2 w\|_{L^{\infty}(\Omega)}
    \le C\left(1 +
      (\|u\|_{L^{\infty}(\Omega)} + \|v\|_{L^{\infty}(\Omega)})
      (1 + \log
      (1 + \|∇u\|_{L^q(\Omega)}^q + \|∇v\|_{L^q(\Omega)}^q)
      ) \right).
\]
\end{lem}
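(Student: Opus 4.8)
The plan is to combine the three preceding lemmas in the obvious chain: use Lemma~\ref{bmo:elli} to control $\|D^2 w\|_{\mathrm{BMO}(\Omega)}$ by $\|u\|_{L^\infty}+\|v\|_{L^\infty}$, use Lemma~\ref{log:Sobolev} to recover an $L^\infty$-bound on $D^2 w$ from the BMO-bound together with a $W^{1,q}$-bound on $D^2 w$, and use Lemma~\ref{max_reg:elli}(i) (the $W^{3,q}$-part) to bound $\|D^2 w\|_{W^{1,q}(\Omega)}$ by $\|u\|_{W^{1,q}}+\|v\|_{W^{1,q}}$. The role of the hypothesis $\|u\|_{L^1}\le K$, $\|v\|_{L^1}\le K$ is to let us replace the full $W^{1,q}$-norm by the gradient seminorm plus a constant depending only on $K$: indeed $\|u\|_{W^{1,q}}\le \|∇u\|_{L^q}+\|u\|_{L^q}$ and $\|u\|_{L^q}$ can be interpolated between $\|u\|_{L^1}\le K$ and $\|∇u\|_{L^q}$ via Gagliardo--Nirenberg (or simply bounded by $C(1+\|∇u\|_{L^q})$ after Poincaré-type control of the mean), which is what produces the clean right-hand side of the statement.

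First I would fix $q>n$ and $K>0$, and note that by Lemma~\ref{Lp-elli} (with $p=1$) and the hypotheses, $\|w\|_{L^1(\Omega)}\le \frac{\alpha}{\gamma}K+\frac{\beta}{\gamma}K=:C_1$. Applying Lemma~\ref{bmo:elli} componentwise to each second derivative $\partial_i\partial_j w$ — or directly to $w$ with $f=\alpha u+\beta v$, $\phi = w$ (so $\alpha_{\text{Lem}}=\gamma$, $\beta_{\text{Lem}}=1$, $f=\alpha u+\beta v$) — gives
\[
  \|D^2 w\|_{\mathrm{BMO}(\Omega)} \le C_2\big(\|u\|_{L^\infty(\Omega)}+\|v\|_{L^\infty(\Omega)}\big).
\]
Next, Lemma~\ref{max_reg:elli}(i) yields $\|w\|_{W^{3,q}(\Omega)}\le C_3(\|u\|_{W^{1,q}(\Omega)}+\|v\|_{W^{1,q}(\Omega)})$, hence $\|D^2 w\|_{W^{1,q}(\Omega)}\le C_3(\|u\|_{W^{1,q}}+\|v\|_{W^{1,q}})$. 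Using $\|u\|_{W^{1,q}}\le \|∇u\|_{L^q}+\|u\|_{L^q}$ and Gagliardo--Nirenberg $\|u\|_{L^q}\le C(\|∇u\|_{L^q}^\theta\|u\|_{L^1}^{1-\theta}+\|u\|_{L^1})\le C(K)(1+\|∇u\|_{L^q})$ (and likewise for $v$), I obtain
\[
  \|D^2 w\|_{W^{1,q}(\Omega)} \le C_4\big(1+\|∇u\|_{L^q(\Omega)}+\|∇v\|_{L^q(\Omega)}\big).
\]

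Finally I feed these two estimates into Lemma~\ref{log:Sobolev} applied to each component $f=\partial_i\partial_j w\in W^{1,q}(\Omega)$:
\[
  \|D^2 w\|_{L^\infty} \le C\Big(1+\|D^2 w\|_{\mathrm{BMO}}\big(1+\log^+\|D^2 w\|_{W^{1,q}}\big)\Big).
\]
Substituting the BMO-bound and the $W^{1,q}$-bound, and using $\log^+(C_4(1+\|∇u\|_{L^q}+\|∇v\|_{L^q}))\le C_5(1+\log(1+\|∇u\|_{L^q}^q+\|∇v\|_{L^q}^q))$ (here the $q$-th powers inside the logarithm are harmless, being absorbed by a constant factor of the $\log$, since $\log(a^q)=q\log a$ and $\|∇u\|_{L^q}+\|∇v\|_{L^q}\le C(1+\|∇u\|_{L^q}^q+\|∇v\|_{L^q}^q)^{1/q}$), gives exactly the claimed inequality. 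No step is a serious obstacle; the only mild subtlety — and the one place to be careful — is the bookkeeping that turns the Gagliardo--Nirenberg interpolation and the various norm comparisons into the precise form $1+\log(1+\|∇u\|_{L^q}^q+\|∇v\|_{L^q}^q)$, i.e. making sure the $L^1$-bounds (not any higher norm) are all that is needed to absorb the lower-order terms, which is the whole point of carrying the constant $K$.
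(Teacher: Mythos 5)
Your proof is correct and follows essentially the same route as the paper's: the chain Lemma~\ref{bmo:elli} (BMO bound on $D^2w$ via $\|u\|_{L^\infty}+\|v\|_{L^\infty}$), Lemma~\ref{max_reg:elli}\,(i) ($W^{1,q}$ bound on $D^2w$ via $W^{1,q}$ norms of $u,v$), Gagliardo--Nirenberg plus the $L^1$ hypothesis to replace $W^{1,q}$ norms by gradient norms, and finally Lemma~\ref{log:Sobolev} to conclude, is exactly what the paper does, merely in a slightly different order. The only superfluous step is the initial invocation of Lemma~\ref{Lp-elli} to bound $\|w\|_{L^1}$, which is never used afterward.
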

%%%%%%%%%%%%%%%%%%%%%%%%%%%%%%%%%%%%%%%%%%%%%%%%%%%%%%%%%%%%%%%%%%
%------------------------------proof------------------------------%
\begin{proof}
We first note that
Lemma~\ref{max_reg:elli} provides $C_1 > 0$ such that
\begin{align}\label{Singular2}
    \|D^2 w\|_{W^{1,q}(\Omega)}
    \le C_1 (\|u\|_{W^{1,q}(\Omega)} + \|v\|_{W^{1,q}(\Omega)})
\end{align}
for all $u,v \in W^{1,q}(\Omega)$ and
the corresponding classical solution $w$ of \eqref{pr:elli}.

From Lemma~\ref{log:Sobolev}, we have
\begin{align}\label{BMO}
    \|D^2 w\|_{L^{\infty}(\Omega)}
    \le C_2 \left(1 + \|D^2 w\|_{{\rm BMO}(\Omega)}
        (1 + \log^{+} \|D^2 w\|_{W^{1,q}(\Omega)}) \right)
\end{align}
for some $C_2 > 0$.
In addition,
%Lemma~\ref{max_reg:elli} and 
Lemma~\ref{bmo:elli}
entails the existence of $C_3 > 0$ such that
\begin{align}
    \|D^2 w\|_{{\rm BMO}(\Omega)}
    \le C_3 (\|u\|_{L^{\infty}(\Omega)} + \|v\|_{L^{\infty}(\Omega)}),
      \label{Singular1}
%\intertext{and}
%    &.
%      \label{Singular2}
\end{align}
Here, we apply the Gagliardo--Nirenberg inequality
and the Young inequality to see that with some $C_4>0$,
\begin{align}
\notag
    &\|u\|_{W^{1,q}(\Omega)} + \|v\|_{W^{1,q}(\Omega)}
\\ \notag
    &\quad\, \le C_4 (\|∇u\|_{L^q(\Omega)} + \|∇v\|_{L^q(\Omega)}
        + \|u\|_{L^1(\Omega)} + \|v\|_{L^1(\Omega)})
\\
    &\quad\, \le C_4 (2 + \|u\|_{L^1(\Omega)} + \|v\|_{L^1(\Omega)})
      (1 + \|∇u\|_{L^q(\Omega)}^q
	       + \|∇v\|_{L^q(\Omega)}^q)
       \label{Singular3}
\end{align}
holds for every $u,v\in W^{1,q}(\Omega)$.
Combining \eqref{Singular2}, \eqref{BMO}, \eqref{Singular1}
and \eqref{Singular3} yields the result.
\end{proof}
%-----------------------------------------------------------------%

%\newpage
%%==============================================================%%
%%==============================================================%%
%%                                               Section 3                                              %%
%%                             Parabolic--parabolic--elliptic case                              %%
%%==============================================================%%
%%==============================================================%%
\section{Parabolic--parabolic--elliptic case}\label{PPE}

The main purpose of this section is
to find a common existence time of solutions to \eqref{2sp-LV},
regardless of the value of $d_1$ and $d_2$ (Lemma~\ref{2sp-enough}),
which acts as the basis for
considering the approximation of solutions in \eqref{HHE-LV}.
We also prepare some lemmas about the estimate of solutions
which will be used in Section~\ref{HHE}.

%
%%==============================================================%%
%%                                               Subsection 3.1                                      %%
%%                     Some basic results of solutions to (1.1)                              %%
%%==============================================================%%
\subsection{Some basic results of solutions to \tops{\eqref{2sp-LV}}{(1.1)}}\label{subsec:locex}

We begin this section with following local well-posedness
of solution to \eqref{2sp-LV}.

%
%%%%%%%%%%%%%%%%%%%%%%%%%%%%%%%%%%%%%%%%%%%%%%%%%%%%%%%%%%%%%%%%%%
%                                                 Lemma                                             %
%%%%%%%%%%%%%%%%%%%%%%%%%%%%%%%%%%%%%%%%%%%%%%%%%%%%%%%%%%%%%%%%%%
%
\begin{lem}\label{local-2sp}
Let $d_1, d_2, d_3, \chi_1, \chi_2,
  \mu_1, \mu_2, a_1, a_2, \alpha, \beta, \gamma \ge 0$,
and suppose that $u_0 \in C^0(\Ombar)$
and $v_0 \in C^0(\Ombar)$ are nonnegative.
Then there exist $\Tmaxd \in (0, \infty]$ and
a uniquely determined triple
$(u,v,w)$ of functions
\begin{equation}\label{ex:regularity}
  (u, v, w) \in \big(
    C^0(\Ombar \times [0, \Tmaxd)) \cap
    C^{2,1}(\Ombar \times (0, \Tmaxd))
    \big)^3
\end{equation}
which solves \eqref{2sp-LV} classically in
$\Omega \times (0, \Tmaxd)$, and is such that
\[
  \mbox{either}\ \Tmaxd = \infty
    \quad\mbox{or}\quad
  \limsup_{t \nearrow \Tmaxd}
  (\|u(\cdot,t)\|_{L^{\infty}(\Omega)} + 
    \|v(\cdot,t)\|_{L^{\infty}(\Omega)}) = \infty.
\]
Moreover, $u$, $v$ and $w$ are nonnegative
in $\Ombar \times (0, \Tmaxd)$ and belong to $C^\infty(\Om\times (0,\Tmaxd))$.
\end{lem}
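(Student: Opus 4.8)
The plan is to set up a standard Banach fixed-point argument for the chemotaxis pair $(u,v)$, using the elliptic equation to eliminate $w$, and then to bootstrap regularity. First I would note that the third equation, $0 = d_3\Delta w + \alpha u + \beta v - \gamma w$ with Neumann data, defines a solution operator $\mathcal{S}\colon (u,v)\mapsto w$; by elliptic regularity (Lemma~\ref{max_reg:elli}) this operator maps, say, $C^0(\Ombar)\times C^0(\Ombar)$ into $W^{2,q}(\Omega)$ for any $q$, hence $\nabla w$ is Hölder continuous in $x$ whenever $u,v$ are bounded. Substituting $w=\mathcal{S}(u,v)$, the first two equations become a quasilinear parabolic system for $(u,v)$ alone, to which the well-developed local existence theory for such systems applies (e.g. via the variation-of-constants formula with the Neumann heat semigroups $e^{td_i\Delta}$ and a contraction in $C^0(\Ombar\times[0,T])\times C^0(\Ombar\times[0,T])$ for $T$ small, exactly as in Winkler's standard references). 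This yields $\Tmaxd\in(0,\infty]$, a unique maximal mild solution $(u,v)\in C^0(\Ombar\times[0,\Tmaxd))$, and then $w\in C^0(\Ombar\times[0,\Tmaxd))$ through $\mathcal{S}$.

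Next I would upgrade regularity on $\Ombar\times(0,\Tmaxd)$. Given boundedness of $(u,v)$ on compact time intervals, Lemma~\ref{max_reg:elli}(ii) gives $w\in C^{2+\sigma}$ in space (locally uniformly in $t$) once $u,v$ are Hölder continuous; bootstrapping with parabolic Schauder estimates applied to the $u$- and $v$-equations — whose coefficients involve $\nabla w$ and $D^2w$, controlled by Hölder norms of $u,v$ — successively improves $(u,v)$ to $C^{2,1}(\Ombar\times(0,\Tmaxd))$, and then $w\in C^{2,1}$ as well, giving \eqref{ex:regularity}; iterating the Schauder/elliptic regularity pair yields $C^\infty$ in the interior $\Om\times(0,\Tmaxd)$. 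The extensibility criterion is the usual dichotomy: if $\Tmaxd<\infty$ but $\|u(\cdot,t)\|_{L^\infty}+\|v(\cdot,t)\|_{L^\infty}$ stayed bounded as $t\nearrow\Tmaxd$, then $w$ would stay bounded in $W^{2,q}$ and one could re-run the fixed-point argument from a time close to $\Tmaxd$ with a uniform existence span, extending the solution beyond $\Tmaxd$ — a contradiction.

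For nonnegativity, I would argue as follows. Since $w\ge 0$ is not needed a priori for the comparison, first observe $u\equiv 0$ and $v\equiv 0$ are subsolutions of their respective equations (the competition and chemotaxis terms vanish there), so the maximum principle for the scalar parabolic operators — applied with the now-smooth coefficient field $\nabla w$ — gives $u\ge 0$ and $v\ge 0$ on $\Ombar\times(0,\Tmaxd)$; then the elliptic maximum principle applied to $-d_3\Delta w+\gamma w=\alpha u+\beta v\ge 0$ with Neumann data yields $w\ge 0$. The main obstacle, and the only genuinely delicate point, is the coupling through $\nabla\cdot(u\nabla w)$: because $w$ solves only an elliptic equation with right-hand side $u,v$, the chemotactic drift $\nabla w$ has regularity exactly one derivative better than $(u,v)$ in space and none in time, so one must be careful to close the fixed-point estimates in a space (e.g. $C^0$ in time with values in $C^0$, or a small power of $t$ times $\nabla$) where the map $(u,v)\mapsto\mathcal{S}(u,v)$ and the Duhamel integral are both contractions; the Neumann heat-semigroup smoothing estimates $\|e^{td_i\Delta}\nabla\cdot\varphi\|_{L^\infty}\le C t^{-1/2}\|\varphi\|_{L^\infty}$ make this work, and the same bounds give the uniform local existence time used for the extensibility criterion — this is entirely standard once $w$ has been eliminated, and I would cite the corresponding results rather than reproduce the computation.
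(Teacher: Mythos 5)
Your proposal is correct and follows essentially the same route as the paper, which simply cites the standard fixed-point argument of Stinner--Tello--Winkler \cite{STW-2014} (eliminate $w$ by the elliptic solution operator, contract in $C^0$ via the variation-of-constants formula and Neumann semigroup smoothing, bootstrap with Schauder/elliptic regularity, and derive the extensibility dichotomy from a uniform local existence time) together with the maximum principle for nonnegativity. Your sketch simply fills in what that citation leaves implicit.
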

%%%%%%%%%%%%%%%%%%%%%%%%%%%%%%%%%%%%%%%%%%%%%%%%%%%%%%%%%%%%%%%%%%
%------------------------------proof------------------------------%
\begin{proof}
This local existence result can be proved by
a standard fixed point argument as in \cite[Lemma 2.1]{STW-2014}.
The nonnegativity of $u$, $v$ and $w$ is
a consequence of the maximum principle.
\end{proof}
%-----------------------------------------------------------------%

The regularity of solutions in Lemma~\ref{local-2sp} is the usual for classical solutions of parabolic equations. 
Some of the essential estimates in the following will be based on an ODE comparison argument employed in the study of $\frac{\rm d}{{\rm d}t} \io |∇w|^q$, 
whose application requires continuity of $t\mapsto \io |∇w(\cdot,t)|^q$ at $t=0$. This continuity is not covered by \eqref{ex:regularity}. 
In \cite{L-2015,X-2020}, this problem was solved by posing higher requirements on the initial data, including a compatibility condition on $\partial_{\nu} u_0$ on the boundary (cf.~\cite[remark, p.~6]{X-2020}), which led to $C^1$ regularity up to the initial boundary.

Here we get rid of this technical condition by applying Amann's theory \cite{A-1993} to the parabolic system of equations solved by solution components $u$ and $v$ (for given $w$).

%
%%%%%%%%%%%%%%%%%%%%%%%%%%%%%%%%%%%%%%%%%%%%%%%%%%%%%%%%%%%%%%%%%%
%                                                 Lemma                                             %
%%%%%%%%%%%%%%%%%%%%%%%%%%%%%%%%%%%%%%%%%%%%%%%%%%%%%%%%%%%%%%%%%%
%
\begin{lem}\label{W1q:regularity}
In addition to the hypotheses of Lemma~\ref{local-2sp},
let $q > n$ and $u_0, v_0 \in W^{1,q}(\Omega)$.
Then the solution $(u,v,w)$ from Lemma~\ref{local-2sp}
has the following additional regularity\/{\rm :}
\begin{align*}
  u, v \in C^0([0, \Tmaxd); W^{1,q}(\Omega)).
\end{align*}
\end{lem}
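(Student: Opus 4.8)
The plan is to upgrade the regularity of the already-constructed solution $(u,v,w)$ from Lemma~\ref{local-2sp} by viewing $(u,v)$ as a solution of a linear (in $(u,v)$) parabolic system whose coefficients are built from the -- now known to be regular -- component $w$. First I would fix $T < \Tmaxd$ and recall from Lemma~\ref{local-2sp} that $w \in C^{2,1}(\Ombar \times (0,T]) \cap C^0(\Ombar \times [0,T])$, and that $u,v \in C^0(\Ombar \times [0,T])$ are bounded there. Rewriting the first two equations of \eqref{2sp-LV} as
\[
  u_t = d_1 \Delta u - \chi_1 ∇ w \cdot ∇ u + \big(\mu_1(1 - u - a_1 v) - \chi_1 \Delta w\big)u, \qquad
  v_t = d_2 \Delta v - \chi_2 ∇ w \cdot ∇ v + \big(\mu_2(1 - a_2 u - v) - \chi_2 \Delta w\big)v,
\]
we see that $(u,v)$ solves a triangular (indeed diagonal in the principal part) reaction--diffusion--advection system of the form $\partial_t U = \nabla\cdot(\mathbf{a}\nabla U) + \mathbf{f}(x,t,U)$ with Neumann boundary conditions, where the diffusion matrix is diagonal with constant entries $d_1,d_2 > 0$ and the lower-order data depend on $x$ and $t$ only through $w$, $∇w$, $\Delta w$ and through $U$ itself in a locally Lipschitz way. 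On any interval $[\tau,T]$ with $\tau > 0$ these coefficients are as smooth as required, and on $[0,T]$ they are at least bounded and continuous, which is what Amann's theory needs for a mild/strong solution in the $W^{1,q}$ setting.

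Next I would invoke Amann's results \cite{A-1993} on quasilinear parabolic systems: for initial data $u_0, v_0 \in W^{1,q}(\Omega)$ with $q > n$ (so that $W^{1,q}(\Omega) \hookrightarrow C^0(\Ombar)$ and $W^{1,q}$ serves as an admissible initial-value space between the base space $L^q$ and the domain of the realization of the Neumann Laplacian), the system above has a unique maximal solution in the class $C^0([0,T_{\max}); W^{1,q}(\Omega))$, and this solution also lies in $C^0(\Ombar \times [0,T_{\max})) \cap C^{2,1}(\Ombar\times(0,T_{\max}))$ by parabolic smoothing. The crucial point is then a uniqueness/identification argument: the solution produced by Amann's theorem is, in particular, a classical solution of \eqref{2sp-LV} on $\Omega \times (0, T)$ with the same initial data and the same (given) $w$, and hence -- by the uniqueness statement already contained in Lemma~\ref{local-2sp} -- coincides with $(u,v,w)$ on the common interval of existence. (One should note $w$ itself is determined pointwise in $t$ from $(u,v)$ by the elliptic equation via Lemma~\ref{max_reg:elli}, so the fixed-point structure closes; alternatively one treats $w$ as the given data and only the $u,v$ equations need Amann's machinery.) Consequently $u, v \in C^0([0,T]; W^{1,q}(\Omega))$, and since $T < \Tmaxd$ was arbitrary we conclude $u, v \in C^0([0,\Tmaxd); W^{1,q}(\Omega))$ as claimed.

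I expect the main obstacle to be the bookkeeping needed to verify that the hypotheses of Amann's abstract theorem are genuinely met -- specifically, checking that the Neumann boundary operator is normally elliptic and compatible, that $W^{1,q}(\Omega)$ is an admissible interpolation space of the right class for the $L^q$-realization of the Neumann Laplacian when $q>n$, and that the nonlinearity $\mathbf{f}$ (which contains the term $\Delta w$, only bounded and continuous up to $t=0$, not Hölder) satisfies the measurability and local-Lipschitz conditions required for a $W^{1,q}$-solution rather than merely a classical one. The logistic and competition terms $-\mu_1 u^2 - \mu_1 a_1 uv$ etc.\ are locally Lipschitz on $W^{1,q}\hookrightarrow L^\infty$, so they cause no trouble; the advection term $-\chi_1 ∇w\cdot ∇u$ has a coefficient $∇w$ that is continuous on $\Ombar\times[0,T]$ (by Lemma~\ref{max_reg:elli}(i) applied pointwise in $t$, together with the Sobolev embedding $W^{2,q}\hookrightarrow C^1$) and this is exactly the borderline regularity Amann's framework is designed to handle. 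Once these verifications are in place, the identification with the Lemma~\ref{local-2sp} solution and the conclusion are immediate.
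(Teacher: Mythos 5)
Your proposal is correct and follows essentially the same route as the paper: freeze $w$, view the first two equations of \eqref{2sp-LV} as a parabolic system in $(u,v)$ alone, apply Amann's theory \cite{A-1993} (the paper cites Theorems~14.4 and~14.6) to obtain a solution in $C^0([0,\widetilde T);W^{1,q}(\Omega))$, and identify it with the solution from Lemma~\ref{local-2sp} via uniqueness. The only cosmetic difference is that the paper keeps the chemotactic term in divergence form, $\partial_j(a_j\omega)$ with $a_j=\operatorname{diag}(\chi_1\partial_jw,\chi_2\partial_jw)$, whereas you expand it to non-divergence form, pushing $\Delta w$ into the zero-order coefficient; both fit Amann's framework and the coefficient regularity you worry about is not an issue since $\Delta w = -(\alpha u+\beta v-\gamma w)/d_3$ is continuous up to $t=0$.
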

%%%%%%%%%%%%%%%%%%%%%%%%%%%%%%%%%%%%%%%%%%%%%%%%%%%%%%%%%%%%%%%%%%
%------------------------------proof------------------------------%
\begin{proof}
For fixed $w$, we consider the system
\begin{align}\label{2sp}
  \begin{cases}
    \util_t = d_1 \Delta \util - \chi_1 ∇ \cdot (\util ∇ w)
             + \mu_1 \util (1 - \util - a_1 \vtil),
    & x\in\Omega,\ t>0, 
  \\
    \vtil_t = d_2 \Delta \vtil - \chi_2 ∇ \cdot (\vtil ∇ w)
             + \mu_2 \vtil (1 - a_2 \util - \vtil),
    & x\in\Omega,\ t>0,
  \\
    ∇\util \cdot \nu = ∇\vtil \cdot \nu = 0,
    & x\in \pa\Omega,\ t>0,
  \\
    \util(x,0) = u_0(x),\ \vtil(x,0) = v_0(x),
    & x\in \Omega.
    \end{cases}
\end{align}
\newcommand{\matr}[1]{\begin{pmatrix}#1\end{pmatrix}}
If we let
\[
 a_{jk}:=\delta_{jk} \matr{d_1&0\\0&d_2},\quad a_j :=\matr{\chi_1 \pa_j w & 0 \\ 0 & \chi_2 \pa_j w},\quad j,k\in\{1,\ldots,n\},
\]
and define
 \begin{align*}
   \mathcal{A}\omega
   &:=
     - \sum_{j, k=1}^{n} \pa_j \kl{ a_{jk}\partial_k\omega
}
     + \sum_{j = 1}^{n} \pa_j \kl{a_j\omega}
,&\qquad 
  \mathcal{F}(\omega)
  &:=
    \begin{pmatrix}
      \mu_1 \util (1 - \util - a_1 \vtil) \\
      \mu_2 \vtil (1 - a_2 \util - \vtil)
    \end{pmatrix},
\end{align*}
then for $\omega := \matr{\util\\ \vtil}$, this system can be written as
\begin{align*}
    \begin{cases}
    \omega_t + \mathcal{A}\omega = \mathcal{F}(\omega),
    & x\in\Omega,\ t>0, 
  \\
    ∇\omega \cdot \nu = 0,
    & x\in \pa\Omega,\ t>0,
  \\
    \omega(x,0) = (u_0(x), v_0(x)),
    & x\in \Omega.
    \end{cases}
\end{align*}
Now we conclude from
\cite[Theorems 14.4 and 14.6]{A-1993} that
there exist $\til{T} \in (0, \Tmaxd]$ and a uniquely determined pair
\[
  (\util, \vtil) \in \big(
    C^0([0, \til{T}); W^{1,q}(\Omega)) \cap
    C^{2,1}(\Ombar \times (0, \til{T}))
    \big)^2
\]
that solves \eqref{2sp} classically
in $\Omega \times (0, \til{T})$.
Due to uniqueness of solutions to \eqref{2sp-LV} as asserted by Lemma~\ref{local-2sp},
this together with the regularity of $(u,v)$ on $\Ombar\times(0,\Tmaxd)$
from Lemma~\ref{local-2sp} implies that
$(\util, \vtil) = (u, v)$ and $\til{T} = \Tmaxd$.
\end{proof}
%-----------------------------------------------------------------%

As usual, boundedness of mass is important, but easily obtained.

%
%%%%%%%%%%%%%%%%%%%%%%%%%%%%%%%%%%%%%%%%%%%%%%%%%%%%%%%%%%%%%%%%%%
%                                                 Lemma                                             %
%%%%%%%%%%%%%%%%%%%%%%%%%%%%%%%%%%%%%%%%%%%%%%%%%%%%%%%%%%%%%%%%%%
%
\begin{lem}\label{L1-2sp}
Let $T > 0$, and suppose that
$(u, v, w)$ is a classical solution to \eqref{2sp-LV}
in $\Omega \times (0,T)$
with initial data $u_0 \in C^0(\Ombar)$ and $v_0 \in C^0(\Ombar)$.
Then we have
\begin{align}
    &\|u(\cdot, t)\|_{L^1(\Omega)} 
    \le \max\{|\Omega|, \|u_0\|_{L^1(\Omega)}\}
    \label{L1u}
\\
    \intertext{and}
    &\|v(\cdot, t)\|_{L^1(\Omega)} 
    \le \max\{|\Omega|, \|v_0\|_{L^1(\Omega)}\}
    \label{L1v}
\end{align}
for all $t \in (0, T)$.
\end{lem}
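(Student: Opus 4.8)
The plan is to integrate the first and second PDEs of \eqref{2sp-LV} over $\Omega$, exploiting the homogeneous Neumann boundary conditions to kill the diffusion and chemotaxis contributions, and then to bound the resulting logistic terms from above by dropping the nonpositive quadratic and cross terms. Concretely, for the $u$-equation I would write $\dt \io u = d_1 \io \Delta u - \chi_1 \io ∇\cdot(u∇w) + \mu_1 \io u(1-u-a_1 v)$. The first term vanishes by $∇u\cdot\nu = 0$ on $\pa\Omega$, and the second vanishes by $∇w\cdot\nu = 0$ on $\pa\Omega$ (both via the divergence theorem). Since $u,v \ge 0$ on $\Ombar\times(0,T)$ by Lemma~\ref{local-2sp} and $a_1,\mu_1 \ge 0$, we have $\mu_1 u(1-u-a_1 v) \le \mu_1 u(1-u) = \mu_1 u - \mu_1 u^2$, so $\dt \io u \le \mu_1 \io u - \mu_1 \io u^2$.

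Next I would discard the $-\mu_1\io u^2$ term (it is nonpositive) to get the crude bound $\dt \io u \le \mu_1 \io u$, which already yields exponential-in-time control; but to obtain the stated $t$-independent bound I would instead keep the quadratic term and invoke the Cauchy--Schwarz (or Hölder) inequality $\left(\io u\right)^2 \le |\Omega| \io u^2$, giving $\dt \io u \le \mu_1 \io u - \frac{\mu_1}{|\Omega|}\left(\io u\right)^2$. Writing $y(t) := \io u(\cdot,t)$, this is the logistic ODE inequality $y' \le \mu_1 y(1 - y/|\Omega|)$, whose solutions satisfy $y(t) \le \max\{|\Omega|, y(0)\}$ for all $t$: indeed, if $y(0) \le |\Omega|$ then $y$ stays below $|\Omega|$ by a comparison argument at the first hypothetical crossing time, and if $y(0) > |\Omega|$ then $y' \le 0$ wherever $y \ge |\Omega|$, so $y$ is nonincreasing until it reaches $|\Omega|$ and stays there. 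Since $y(0) = \io u_0 = \|u_0\|_{L^1(\Omega)}$ by nonnegativity of $u_0$, this gives exactly \eqref{L1u}. The estimate \eqref{L1v} follows by the identical argument applied to the second equation, using $\mu_2 v(1 - a_2 u - v) \le \mu_2 v(1-v)$ and $\left(\io v\right)^2 \le |\Omega|\io v^2$.

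There is no real obstacle here; the only points requiring a word of care are the justification that the boundary integrals of $\Delta u$ and $∇\cdot(u∇w)$ vanish — which is immediate from the Neumann conditions in \eqref{2sp-LV} and the regularity $(u,v,w) \in C^{2,1}(\Ombar\times(0,T))$ — and the ODE comparison, for which one may either cite a standard comparison principle for scalar ODEs or give the short first-crossing-time argument sketched above. One might also note that the continuity of $t \mapsto \io u(\cdot,t)$ up to $t=0$ needed to identify the initial value is guaranteed by $(u,v,w) \in C^0(\Ombar\times[0,\Tmaxd))$ from Lemma~\ref{local-2sp}, so no appeal to the sharper $W^{1,q}$-regularity of Lemma~\ref{W1q:regularity} is required for this lemma.
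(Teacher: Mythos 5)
Your proof is correct and follows essentially the same route as the paper: integrate the $u$-equation over $\Omega$, use the Neumann conditions to eliminate the diffusion and chemotaxis terms, drop the nonpositive competition term, apply Cauchy--Schwarz to replace $\io u^2$ by $\frac{1}{|\Omega|}(\io u)^2$, and conclude by ODE comparison with the logistic equation; the argument for $v$ is identical. You have simply spelled out the details (boundary integrals vanishing, the first-crossing comparison argument, continuity at $t=0$) that the paper leaves implicit.
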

%%%%%%%%%%%%%%%%%%%%%%%%%%%%%%%%%%%%%%%%%%%%%%%%%%%%%%%%%%%%%%%%%%
%------------------------------proof------------------------------%
\begin{proof}
Integrating the first equation of \eqref{2sp-LV}
over $\Omega$
with an application of the Cauchy--Schwarz inequality yields
\begin{align*}
    \dt \io u \le \mu_1 \io u
      - \dfrac{\mu_1}{|\Omega|} \left(\io u \right)^2.
\end{align*}
Then, an ODE-comparison leads to \eqref{L1u}.
Doing the same way for $v$, we also get \eqref{L1v}.
\end{proof}
%-----------------------------------------------------------------%

%
%%==============================================================%%
%%                                        Subsection 3.2                                      %%
%%               Controlling W^1,q-norm of solutions by their L^\infty-norm        %%
%%==============================================================%%
\subsection{Controlling the \tops{$W^{1,q}$}{W1q}-norm of a solution by its \tops{$L^{\infty}$}{L infty}-norm}\label{W1,q-Linf}

In this section we are going to derive an inequality
which is crucial for our construction of solutions to \eqref{HHE-LV}.
Note that this section contributes the most to
remove the radial symmetric setting in Theorem~\ref{main_thm}.

The following lemma parallels \cite[Lemma 3.5]{W-2014},
\cite[Lemma 3.7]{L-2015} and \cite[Lemma 2.5]{X-2020},
however, in order to give an estimate
without dependencies on $d_1$ and $d_2$,
we need an upper bound on those.

%
%%%%%%%%%%%%%%%%%%%%%%%%%%%%%%%%%%%%%%%%%%%%%%%%%%%%%%%%%%%%%%%%%%
%                                                 Lemma                                             %
%%%%%%%%%%%%%%%%%%%%%%%%%%%%%%%%%%%%%%%%%%%%%%%%%%%%%%%%%%%%%%%%%%
%
\begin{lem}\label{control_grad}
Let $q > \max\{n, 2\}$ and $d_{*} > 0$.
Let $u_0, v_0 \in W^{1,q}(\Omega)$ be nonnegative.
Then there is $C > 0$ such that
for all $d_1, d_2 \in (0, d_{*})$ and $T > 0$,
any classical solution $(u,v,w)$ of \eqref{2sp-LV}
in $\Omega \times (0,T)$
satisfies
\begin{align*}
    &\dt
    \left(\int_{\Omega} |∇ u(\cdot, t)|^q 
    + \int_{\Omega} |∇ v(\cdot, t)|^q \right)
\\
    &\quad\, \le C(1 + 
      f(t))
      \left(\int_{\Omega} |∇ u(\cdot, t)|^q 
    + \int_{\Omega} |∇ v(\cdot, t)|^q \right)
\\
    &\qquad \, + C \cdot
      f(t)
      \log
      (1 + \|∇u(\cdot,t)\|_{L^q(\Omega)}^q 
        + \|∇v(\cdot,t)\|_{L^q(\Omega)}^q)
      \left(\int_{\Omega} |∇ u(\cdot, t)|^q 
    + \int_{\Omega} |∇ v(\cdot, t)|^q \right)
\\
    &\qquad \, + C \cdot
       g(t)
\end{align*}
for all $t \in (0,T)$, where
\begin{align}
     &f(t) := \|u(\cdot,t)\|_{L^{\infty}(\Omega)}
         + \|v(\cdot,t)\|_{L^{\infty}(\Omega)}, \quad t \in (0,T)
      \label{def:f}
\intertext{and}
     &g(t) := (\|u(\cdot,t)\|_{L^{\infty}(\Omega)}
         + \|v(\cdot,t)\|_{L^{\infty}(\Omega)})
         (\|u(\cdot,t)\|_{L^{\infty}(\Omega)}^q
         + \|v(\cdot,t)\|_{L^{\infty}(\Omega)}^q), \quad t \in (0,T).
      \label{def:g}
\end{align}
\end{lem}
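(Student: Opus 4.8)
The plan is to test the equations for $u$ and $v$ (from \eqref{2sp-LV}) by $-∇\cdot(|∇u|^{q-2}∇u)$ and $-∇\cdot(|∇v|^{q-2}∇v)$, respectively, which after integration by parts produces $\frac1q\dt\io|∇u|^q$ (plus boundary terms) on the left and, on the right, the three types of contributions arising from the diffusion term $d_1\Delta u$, the chemotactic term $-\chi_1∇\cdot(u∇w)$, and the logistic term $\mu_1 u(1-u-a_1v)$ --- together with the corresponding terms for $v$. Since the computations for $v$ are structurally identical, I would treat $u$ in detail and indicate the parallel bounds for $v$, finally adding the two differential inequalities. The key is to control each right-hand contribution by $C(1+f(t))\io|∇u|^q$, $C f(t)\log(1+\|∇u\|_{L^q}^q+\|∇v\|_{L^q}^q)\io|∇u|^q$, or $Cg(t)$, with constants independent of $d_1,d_2\in(0,d_*)$.

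For the diffusion term, integrating by parts in $\io|∇u|^{q-2}∇u\cdot∇\Delta u$ yields a good negative bulk term $-\frac{d_1}{2}\io|∇u|^{q-2}|D^2u|^2$ type contribution (after the usual algebra, using $q\ge 2$) plus a boundary integral $\int_{\pa\Omega}|∇u|^{q-2}(∇|∇u|^2\cdot\nu)$; here, rather than invoking convexity to discard it, I would follow \cite{ISY-2014,LW-2017} (specifically the version noted in the remark after Lemma~\ref{D2w}, using $a\in(0,1)$ close to $1$ in \cite[Lemma~2.5]{ISY-2014}, equivalently \cite[Lemma~2.1\,c)]{LW-2017}) to estimate this boundary term by $\varepsilon\io|∇u|^{q-2}|D^2u|^2 + C(\varepsilon)\io|∇u|^q$, absorbing it. For the chemotactic term, expanding $∇(∇u\cdot∇w)=D^2u\,∇w+D^2w\,∇u$ leads to two kinds of integrands: one containing $|∇u|^{q-2}|D^2u||∇u||∇w|$, which is controlled by Young's inequality against the negative diffusion term at the cost of $C d_1^{-1}\|∇w\|_{L^\infty}^2\io|∇u|^q$ --- and here $\|∇w\|_{L^\infty}\le C(\|u\|_{L^\infty}+\|v\|_{L^\infty})$ by Lemma~\ref{max_reg:elli}(i) plus Sobolev embedding, while the $d_1^{-1}$ is harmless because the competing good term carries $d_1$; the other integrand contains $|∇u|^q|D^2w|$, and bounding $\|D^2w\|_{L^\infty}$ by Lemma~\ref{D2w} is exactly what produces the $f(t)\log(1+\|∇u\|_{L^q}^q+\|∇v\|_{L^q}^q)$ factor (after using Lemma~\ref{L1-2sp} to control the $L^1$-norms appearing in Lemma~\ref{D2w}). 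The chemotactic term also contributes a piece with $u\,|∇u|^{q-2}∇u\cdot∇\Delta w$; rewriting $\Delta w=\frac1{d_3}(\gamma w-\alpha u-\beta v)$ from \eqref{2sp-LV} turns $∇\Delta w$ into a combination of $∇w$, $∇u$, $∇v$, and then the $∇u$ and $∇v$ terms give $C f(t)(\io|∇u|^q+\io|∇v|^q)$ after Young, while the $∇w$ term is lower order; one must again watch that a factor $d_1^{-1}$ times $d_1$ from the diffusion term appears when absorbing $D^2u$-type factors, so no $d_1$-dependence survives. Finally the logistic term is the easiest: $∇(u(1-u-a_1v))$ expands into $∇u$, $u∇u$, $v∇u$, $u∇v$ times bounded or $L^\infty$-controlled coefficients, giving $C(1+f(t))(\io|∇u|^q+\io|∇v|^q)$, and a leftover $\io|∇u|^{q-2}∇u\cdot u∇(\cdots)$ does not occur since the source is not differentiated twice --- one only gets first-order terms here, handled by Young's inequality directly (no $D^2u$, no $g(t)$ needed from this term; the $g(t)$ term comes solely from the worst pairing in the chemotactic part after the log-Sobolev step, namely $f(t)\cdot(\|u\|_{L^\infty}^q+\|v\|_{L^\infty}^q)$ type remainders).

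The main obstacle, and the novelty relative to \cite{L-2015,X-2020}, is the treatment of the two integrals in \eqref{diff_int} \emph{without} radial symmetry: in the radial setting the boundary term and the $D^2w$-term could be handled by explicit one-dimensional reductions, whereas here one must (a) control the boundary integral from the diffusion term via the BMO/Campanato-flavoured boundary trace estimate of \cite{ISY-2014,LW-2017} instead of convexity, and (b) control $\|D^2w\|_{L^\infty}$ via the chain Lemma~\ref{bmo:elli} $\to$ Lemma~\ref{log:Sobolev} $\to$ Lemma~\ref{D2w}, which is precisely why the logarithmic factor $\log(1+\|∇u\|_{L^q}^q+\|∇v\|_{L^q}^q)$ appears in the statement. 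A secondary technical point is bookkeeping the $d_1,d_2$ dependence: every place where a $D^2u$ or $D^2v$ factor is absorbed into the negative diffusion term costs an inverse power of $d_1$ or $d_2$, and one must check that this is always multiplied by a matching positive power coming from that same diffusion term, so that the final constant $C$ depends only on $d_*$, $q$, $u_0$, $v_0$ (through their $L^1$- and hence, via Lemma~\ref{L1-2sp}, time-uniform bounds) and the fixed parameters --- but not on $d_1,d_2$ themselves. Once all contributions are collected and $\varepsilon$ is fixed small enough to absorb the $|∇u|^{q-2}|D^2u|^2$-type terms, multiplying through by $q$ and summing the $u$- and $v$-inequalities yields exactly the asserted differential inequality with the functions $f$ and $g$ from \eqref{def:f}--\eqref{def:g}.
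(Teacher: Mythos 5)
Your plan goes wrong at the crucial step where you treat the chemotactic term involving $D^2u$. You propose to estimate the integrand $|∇u|^{q-2}(D^2u\,∇w)\cdot∇u$ by Young's inequality, absorbing the $D^2u$-factor into the negative diffusion term $-d_1\io|∇u|^{q-2}|D^2u|^2$, and you assert that the resulting factor $d_1^{-1}$ is ``harmless because the competing good term carries $d_1$''. This is exactly backwards: the cost of absorbing a term into a good contribution that carries a prefactor $d_1$ is proportional to $d_1^{-1}$, and this $d_1^{-1}$ then appears as a coefficient of $\io|∇u|^q$ in the differential inequality. As $d_1\to 0$ (which is precisely the regime the lemma must cover, since $C$ is required to be uniform over $d_1\in(0,d_*)$), this coefficient blows up. There is no way to make the absorption uniform, because the good term vanishes in the hyperbolic limit. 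Moreover, in the paper's computation the entire negative bulk term produced by the diffusion contribution is used up (exactly, with no remainder) to cancel the boundary integral arising from the trace estimate of \cite{ISY-2014,LW-2017}; nothing is left to absorb into.

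The step you are missing is structural: the $D^2u$-containing part of the chemotactic integrand is actually a perfect gradient. Indeed
\begin{align*}
|∇u|^{q-2}(D^2u\,∇u)\cdot ∇w \;=\; \frac1q\,∇\!\left(|∇u|^q\right)\cdot ∇w,
\end{align*}
so after an integration by parts this contribution becomes $-\frac1q\io|∇u|^q\,\Delta w$, and substituting $\Delta w = -\frac{\alpha u+\beta v-\gamma w}{d_3}$ reduces it to terms bounded by $C(\|u\|_{L^\infty}+\|v\|_{L^\infty}+\|w\|_{L^\infty})\io|∇u|^q$, with no $D^2u$ and no $d_1$-dependence at all. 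The remaining chemotactic piece is $|∇u|^{q-2}(D^2w\,∇u)\cdot∇u$, which is the one genuinely requiring Lemma~\ref{D2w} and producing the logarithmic factor. Without this observation (which is precisely the point that lets the argument survive $d_1,d_2\to 0$), the proof does not close. The rest of your outline — the ISY/LW boundary estimate for the diffusion term, the $\|D^2w\|_{L^\infty}$ estimate via BMO and log-Sobolev giving the $f(t)\log(\cdots)$ factor, the easy handling of the logistic term, and the origin of $g(t)$ in the term involving $\|∇w\|_{L^q}^q$ — is in line with the paper.
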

%%%%%%%%%%%%%%%%%%%%%%%%%%%%%%%%%%%%%%%%%%%%%%%%%%%%%%%%%%%%%%%%%%
%------------------------------proof------------------------------%
\begin{proof}
In view of the identity
\[
  ∇ \cdot (u ∇ w)
  = ∇ u \cdot ∇ w + u \Delta w
  = ∇ u \cdot ∇ w 
    - u \frac{\alpha u + \beta v - \gamma w}{d_3},
\]
we can rewrite the first equation of \eqref{2sp-LV} as follows:
  \begin{align*}
    u_t &= d_1 \Delta u - \chi_1 ∇ u \cdot ∇ w
          + \chi_1 u \frac{\alpha u + \beta v - \gamma w}{d_3}
          + \mu_1 u - \mu_1 u^2 - \mu_1 a_1 uv
\\
        &= d_1 \Delta u - \chi_1 ∇ u \cdot ∇ w
         + \frac{\chi_1 \alpha}{d_3} u^2
         + \frac{\chi_1 \beta}{d_3} uv
         - \frac{\chi_1 \gamma}{d_3} uw
         + \mu_1 u - \mu_1 u^2 - \mu_1 a_1 uv.
  \end{align*}
Then $u$ solves
  \begin{align}\label{nabla-ut}
\notag
    ∇ u_t &= d_1 ∇ \Delta u
                    - \chi_1 ∇ (∇ u \cdot ∇ w)
                    + 2 \frac{\chi_1 \alpha}{d_3} u ∇ u
                    + \frac{\chi_1 \beta}{d_3} u ∇ v
                    + \frac{\chi_1 \beta}{d_3} v ∇ u
\\
       &\quad\, - \frac{\chi_1 \gamma}{d_3} u ∇ w
                    - \frac{\chi_1 \gamma}{d_3} w ∇ u
                    + \mu_1 ∇ u
                    - 2 \mu_1 u ∇ u
                    - \mu_1 a_1 u ∇ v
                    - \mu_1 a_1 v ∇ u
  \end{align}
in $\Omega\times(0,T)$.
Testing \eqref{nabla-ut} by $|∇ u|^{q-2}∇ u$ yields
  \begin{align}
\notag
    \frac{1}{q} \dt \int_{\Omega} |∇ u|^q
    &= d_1 \int_{\Omega} |∇ u|^{q-2}
         ∇ u \cdot ∇ \Delta u
     - \chi_1 \int_{\Omega} |∇ u|^{q-2}
         ∇ u \cdot ∇ (∇ u \cdot ∇ w)
\\ \notag
    &\quad\, + 2 \frac{\chi_1 \alpha}{d_3}
                     \int_{\Omega} u |∇ u|^q
     + \frac{\chi_1 \beta}{d_3}
         \int_{\Omega} |∇ u|^{q-2} u ∇ u \cdot ∇ v
     + \frac{\chi_1 \beta}{d_3} \int_{\Omega} v |∇ u|^q
\\ \notag
    &\quad\, - \frac{\chi_1 \gamma}{d_3}
       \int_{\Omega} |∇ u|^{q-2} u ∇ u \cdot ∇ w
     - \frac{\chi_1 \gamma}{d_3} \int_{\Omega} w |∇ u|^q
     + \mu_1 \int_{\Omega} |∇ u|^q
\\ \notag
    &\quad\, - 2 \mu_1 \int_{\Omega} u |∇ u|^q
     - \mu_1 a_1
      \int_{\Omega} |∇ u|^{q-2} u ∇ u \cdot ∇ v
     - \mu_1 a_1 \int_{\Omega} v |∇ u|^q
\\
    &=: I_1 + I_2 + \dots + I_{11} \label{nablau-q}
  \end{align}
in $(0, T)$.
By means of the identity
$\Delta |∇ u|^2
= 2 ∇ u \cdot ∇ \Delta u + 2 |D^2 u|^2$
and integration by parts,
we obtain
  \begin{align*}
    I_1 &= \frac{d_1}{2}
              \int_{\Omega} |∇ u|^{q-2} \Delta |∇ u|^2
         - d_1 \int_{\Omega} |∇ u|^{q-2} |D^2 u|^2
\\
         &\le \frac{d_1}{2}
              \int_{\Omega} |∇ u|^{q-2} \Delta |∇ u|^2
\\
        &= \frac{d_1}{2} \int_{\pa\Omega} |∇ u|^{q-2}
              (∇ |∇ u|^2 \cdot \nu)
         - \frac{d_1(q-2)}{4} \int_{\Omega} 
             |∇ u|^{q-4} \big|∇ |∇ u|^2 \big|^2
  \end{align*}
in $(0, T)$.
Here, we apply \cite[Lemma 2.1.\ c)]{LW-2017}
to find $C_1 > 0$ such that
\[
    \int_{\pa\Omega} |∇ u|^{q-2} (∇ |∇ u|^2 \cdot \nu)
    \le \frac{q-2}{2} \int_{\Omega} 
           |∇ u|^{q-4} \big|∇ |∇ u|^2 \big|^2
    + C_1 \left(\int_{\Omega} |∇ u|^2\right)^{\frac{q}{2}}.
\]
Therefore, the H\"{o}lder inequality implies
\begin{align}\label{I1}
    I_1 \le \frac{d_1 C_1}{2}
                \left(\int_{\Omega} |∇ u|^2\right)^{\frac{q}{2}}
        \le \frac{d_{*} C_1}{2} |\Omega|^{\frac{q-2}{2}}
                \int_{\Omega} |∇ u|^q
\end{align}
in $(0, T)$.
To estimate $I_2$, we first compute
  \begin{align*}
    |∇ u|^{q-2} ∇ u \cdot ∇ (∇ u \cdot ∇ w)
    &= |∇ u|^{q-2} \sum_{j=1}^{n} \frac{\pa}{\pa x_j} 
         \left(\sum_{i=1}^{n} \frac{\pa u}{\pa x_i}
                 \frac{\pa w}{\pa x_i}\right)
         \frac{\pa u}{\pa x_j}
\\
    &= |∇ u|^{q-2}
        \left(\sum_{i,j=1}^{n} \frac{\pa^2 u}{\pa x_i \pa x_j}
          \frac{\pa u}{\pa x_j} \frac{\pa w}{\pa x_i}
     + \sum_{i,j=1}^{n} \frac{\pa u}{\pa x_i}
         \frac{\pa^2 w}{\pa x_i \pa x_j} \frac{\pa u}{\pa x_j} \right)
\\
    &= \frac{1}{q} ∇(|∇ u|^q) \cdot ∇ w
     + |∇u|^{q-2} (D^2w∇u)\cdot∇u,
  \end{align*}
so that we can see that
  \begin{align}
\notag
    I_2
    &= - \frac{\chi_1}{q}
               \int_{\Omega} ∇(|∇ u|^q) \cdot ∇ w
      - \chi_1 \int_{\Omega} 
          |∇u|^{q-2} (D^2w∇u)\cdot∇u
\\
    &=: I_{2,A} + I_{2,B} \label{I2}
  \end{align}
in $(0, T)$.
Here an integration by parts and inserting the identity
$\Delta w = - \frac{\alpha u + \beta v - \gamma w}{d_3}$
show that
  \begin{align}
\notag
    I_{2,A} &= \frac{\chi_1}{q} \int_{\Omega} |∇ u|^q \Delta w
\\ \notag
     &\le \left(\frac{\chi_1 \alpha}{q d_3} \|u\|_{L^{\infty}(\Omega)}
         + \frac{\chi_1 \beta}{q d_3} \|v\|_{L^{\infty}(\Omega)}
         + \frac{\chi_1 \gamma}{q d_3} \|w\|_{L^{\infty}(\Omega)}
         \right) \int_{\Omega} |∇ u|^q
\\
    &\le C_2(\|u\|_{L^{\infty}(\Omega)} + \|v\|_{L^{\infty}(\Omega)})
        \int_{\Omega} |∇ u|^q \label{I2A}
  \end{align}
in $(0, T)$
with some $C_2 > 0$,
where we also used Lemma~\ref{Lp-elli}.
On the other hand, invoking Lemma~\ref{D2w},
we can obtain
\begin{align}
\notag
    I_{2,B} &\le \chi_1 \|D^2 w\|_{L^{\infty}(\Omega)}
     \int_{\Omega} |∇ u|^q
\\
    &\le C_3 \left(1 +
      (\|u\|_{L^{\infty}(\Omega)} + \|v\|_{L^{\infty}(\Omega)})
      (1 + \log
      (1 + \|∇u\|_{L^q(\Omega)}^q + \|∇v\|_{L^q(\Omega)}^q)
      ) \right)
      \int_{\Omega} |∇ u|^q \label{I2B}
\end{align}
in $(0, T)$
with some $C_3 > 0$.
Now the integral $I_3$ can be estimated as
\begin{align}\label{I3}
    I_3 \le 2 \frac{\chi_1 \alpha}{d_3} \|u\|_{L^{\infty}(\Omega)}
            \int_{\Omega} |∇ u|^q
       \quad\mbox{in}\ (0, T),
\end{align}
and similarly, we can treat $I_5$ as
\begin{align}\label{I5}
  I_5 \le \frac{\chi_1 \beta}{d_3} \|v\|_{L^{\infty}(\Omega)}
         \int_{\Omega} |∇ u|^q
       \quad\mbox{in}\  (0, T).
\end{align}
Using the Young inequality, we have
\begin{align}
\notag
    I_4 + I_{10} &\le \left(\frac{\chi_1 \beta}{d_3} + \mu_1 a_1\right)
        \|u\|_{L^{\infty}(\Omega)}
        \int_{\Omega} |∇ u|^{q-1} |∇ v|
\\
    &\le \left(\frac{\chi_1 \beta}{d_3} + \mu_1 a_1\right)
        \|u\|_{L^{\infty}(\Omega)}
        \left(\frac{q-1}{q} \int_{\Omega} |∇ u|^q
           + \frac{1}{q} \int_{\Omega} |∇ v|^q \right)
   \label{I4and10}
\end{align}
in $(0, T)$.
Applying the Young inequality
and Lemma~\ref{max_reg:elli},
%\blue{(corresponds to \cite[Lemma 2.3]{L-2015})},
%% Maximal regularity for Elliptic PDEs (corresponds to Lemma 2.3) %%
we deduce
  \begin{align}
\notag
    I_6 &\le \frac{\chi_1 \gamma}{d_3}
      \|u\|_{L^{\infty}(\Omega)}
      \left(\dfrac{q-1}{q} \io |∇ u|^{q} 
        + \frac{1}{q} \io |∇ w|^{q} \right)
\\
    &\le C_4 \|u\|_{L^{\infty}(\Omega)}
      \io |∇ u|^{q} 
    + C_4 |\Omega| \|u\|_{L^{\infty}(\Omega)}
     (\|u\|_{L^{\infty}(\Omega)}^q + \|v\|_{L^{\infty}(\Omega)}^q)
      \label{I6}
  \end{align}
in $(0, T)$
with some $C_4 > 0$.
We note that from the nonnegativity of $u$, $v$ and $w$,
we can infer that $I_7 \le 0$, $I_9 \le 0$ and $I_{11}\le 0$
on $(0, T)$.
Plugging \eqref{I1}, \eqref{I2}, \eqref{I3}, \eqref{I5},
\eqref{I4and10} and \eqref{I6} into \eqref{nablau-q},
and taking into account \eqref{I2A}, \eqref{I2B},
we can establish
\begin{align}
\notag
    &\frac{1}{q} \dt \int_{\Omega} |∇ u|^q
%\\ \notag
    \quad\, \le \quad C_5 (1 + \|u\|_{L^{\infty}(\Omega)}
      + \|v\|_{L^{\infty}(\Omega)})
      \int_{\Omega} |∇ u|^q
\\ \notag
    &\qquad\, + C_5 \big( (
        \|u\|_{L^{\infty}(\Omega)} + \|v\|_{L^{\infty}(\Omega)})
        \log
      (1 + \|∇u\|_{L^q(\Omega)}^q + \|∇v\|_{L^q(\Omega)}^q)
      \big) \int_{\Omega} |∇ u|^q
\\
    &\qquad\, + C_5 \|u\|_{L^{\infty}(\Omega)}
      \int_{\Omega} |∇ v|^q
      + C_5 \|u\|_{L^{\infty}(\Omega)}
     (\|u\|_{L^{\infty}(\Omega)}^q + \|v\|_{L^{\infty}(\Omega)}^q)
     \label{Lq-nablau}
\end{align}
in $(0, T)$
with some $C_5 > 0$.
Similarly, for $v$ we can obtain
\begin{align}
\notag
    &\frac{1}{q} \dt \int_{\Omega} |∇ v|^q
%\\ \notag     &
\quad\, \le \quad C_{6} (1 + \|u\|_{L^{\infty}(\Omega)}
      + \|v\|_{L^{\infty}(\Omega)})
      \int_{\Omega} |∇ v|^q
\\ \notag
    &\qquad\, + C_{6} \big( (
        \|u\|_{L^{\infty}(\Omega)} + \|v\|_{L^{\infty}(\Omega)})
        \log
      (1 + \|∇u\|_{L^q(\Omega)}^q + \|∇v\|_{L^q(\Omega)}^q)
      \big) \int_{\Omega} |∇ v|^q
\\
    &\qquad\, + C_{6} \|u\|_{L^{\infty}(\Omega)}
      \int_{\Omega} |∇ u|^q
      + C_{6} \|v\|_{L^{\infty}(\Omega)}
     (\|u\|_{L^{\infty}(\Omega)}^q + \|v\|_{L^{\infty}(\Omega)}^q)
     \label{Lq-nablav}
\end{align}
in $(0, T)$
with some $C_{6} > 0$.
Combining
\eqref{Lq-nablau} and \eqref{Lq-nablav},
we observe that
there is some $C_{7} > 0$ such that
\begin{align*}
&\dt
    \left(\int_{\Omega} |∇ u(\cdot, t)|^q 
    + \int_{\Omega} |∇ v(\cdot, t)|^q \right)
\\
    &\quad\, \le C_{7}(1 + 
      f(t))
      \left(\int_{\Omega} |∇ u(\cdot, t)|^q 
    + \int_{\Omega} |∇ v(\cdot, t)|^q \right)
\\
    &\qquad \, + C_{7} \cdot
      f(t)
     \log
      (1 + \|∇u(\cdot,t)\|_{L^q(\Omega)}^q + \|∇v(\cdot,t)\|_{L^q(\Omega)}^q)
      \left(\int_{\Omega} |∇ u(\cdot, t)|^q 
    + \int_{\Omega} |∇ v(\cdot, t)|^q \right)
\\
    &\qquad \, + C_{7} \cdot
       g(t)
\end{align*}
for all $t \in (0, T)$,
where the functions $f$ and $g$ are defined as in
\eqref{def:f} and \eqref{def:g}.
Hence the proof is completed.
%
%\[
%y(t) := \|∇ u(t)\|_{L^q(\Omega)}^q 
%  + \|∇ v(t)\|_{L^q(\Omega)}^q.
%\]
\end{proof}
%-----------------------------------------------------------------%

%\newpage
Using the differential inequality in Lemma~\ref{control_grad},
we can now control the $W^{1,q}$-norm of solutions to \eqref{2sp-LV}
by their $L^{\infty}$-norm.
In the next lemma, 
with the intention of obtaining an estimate
that helps to construct solutions to \eqref{HHE-LV},
we will derive additional estimates
before using an comparison argument,
as opposed to
\cite[Corollary 3.6]{W-2014} and \cite[Lemma 3.8]{L-2015}.

We note that
thanks to Lemma~\ref{W1q:regularity}, we see that the functions
$t\mapsto \|∇u(\cdot,t)\|_{L^q(\Omega)}$ and
$t\mapsto \|∇v(\cdot,t)\|_{L^q(\Omega)}$
are continuous at $t = 0$,
and we don't have to assume that $u_0$ and $v_0$
are compatible,
which was required in \cite[Corollary 3]{L-2015} and \cite[Lemma 2.5]{X-2020}. 

%
%%%%%%%%%%%%%%%%%%%%%%%%%%%%%%%%%%%%%%%%%%%%%%%%%%%%%%%%%%%%%%%%%%
%                                                 Lemma                                             %
%%%%%%%%%%%%%%%%%%%%%%%%%%%%%%%%%%%%%%%%%%%%%%%%%%%%%%%%%%%%%%%%%%
%
\begin{lem}\label{kari}
Under the assumptions of Lemma~\ref{control_grad},
we have
\begin{align}
\notag
    &e + \io |∇u(\cdot, t)|^q + \io |∇v(\cdot, t)|^q
\\
    &\quad\, \le \exp\left[
      \log\left( e + \io |∇u_0|^q + \io |∇v_0|^q\right)
      \exp\left(\int_{0}^{t} h(s) \, ds\right)\right]
      \label{int_ineq}
\end{align}
for all $t \in (0, T)$, where
\begin{align}\label{def:h}
     h(t) := 2C(1 + 4 \max\set{
      \norm[\Lom\infty]{u(t)}, \norm[\Lom\infty]{v(t)}, 1}^{q+1}),
        \quad t \in (0,T)
\end{align}
with $C$ from Lemma~\ref{control_grad}.
\end{lem}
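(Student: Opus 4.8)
The plan is to integrate the differential inequality from Lemma~\ref{control_grad} using a comparison argument for the quantity
\[
  y(t) := e + \io |∇u(\cdot,t)|^q + \io |∇v(\cdot,t)|^q,
\]
which by Lemma~\ref{W1q:regularity} is continuous on $[0,T)$ (in particular at $t=0$) and satisfies $y \ge e > 1$, so that $\log y$ and $\log\log y$ make sense and all logarithms appearing below are of quantities exceeding $1$. First I would rewrite the conclusion of Lemma~\ref{control_grad}: since $\int_\Om |∇u|^q + \int_\Om |∇v|^q \le y$ and, because $y\ge e$, also $1 + \|∇u\|_{L^q}^q + \|∇v\|_{L^q}^q \le y$, we get
\[
  y'(t) \le C\bigl(1 + f(t)\bigr) y(t) + C f(t)\,(\log y(t))\, y(t) + C g(t),
\]
with $f,g$ as in \eqref{def:f}, \eqref{def:g}. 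The term $Cg(t)$ is handled by absorbing it into the first term: writing $m(t):=\max\{\|u(\cdot,t)\|_{L^\infty},\|v(\cdot,t)\|_{L^\infty},1\}$ we have $f(t)\le 2m(t)$ and $g(t)\le 2m(t)\cdot 2m(t)^q = 4m(t)^{q+1}$, while $y(t)\ge e\ge 1$, so $Cg(t)\le 4C m(t)^{q+1} y(t)$. Likewise $C(1+f(t)) \le C(1+2m(t)) \le 2C(1+2m(t)^{q+1})$ and $Cf(t)\le 2Cm(t) \le 2Cm(t)^{q+1}$; hence one obtains the cleaner inequality
\[
  y'(t) \le \tfrac12 h(t)\, y(t) + \tfrac12 h(t)\, (\log y(t))\, y(t) \le h(t)\,(\log y(t))\, y(t),
\]
where $h$ is exactly the function in \eqref{def:h} (note $\log y \ge 1$, so the non-logarithmic part is dominated by the logarithmic part).

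Next I would linearize by substituting $z(t):=\log\log y(t)$, which is legitimate since $y(t)\ge e$. A direct computation gives $z'(t) = \dfrac{y'(t)}{y(t)\log y(t)} \le h(t)$ by the inequality just derived. Integrating from $0$ to $t$ yields $z(t) \le z(0) + \int_0^t h(s)\,ds$, i.e.
\[
  \log\log y(t) \le \log\log y(0) + \int_0^t h(s)\,ds.
\]
Exponentiating once gives $\log y(t) \le (\log y(0))\,\exp\!\bigl(\int_0^t h(s)\,ds\bigr)$ — here I use that $\exp$ turns the additive constant into a multiplicative factor via $\exp(a+b)=e^a e^b$ with $e^{\log\log y(0)} = \log y(0)$ — and exponentiating a second time produces precisely
\[
  e + \io |∇u(\cdot,t)|^q + \io |∇v(\cdot,t)|^q = y(t) \le \exp\!\left[\,\log\!\Bigl(e + \io |∇u_0|^q + \io |∇v_0|^q\Bigr)\exp\!\Bigl(\int_0^t h(s)\,ds\Bigr)\right],
\]
which is \eqref{int_ineq}.

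I do not expect any serious obstacle here, as this is a routine ODE comparison once the bookkeeping is done; the only points requiring a little care are (a) justifying that $y$ is genuinely $C^1$ (or at least absolutely continuous) on $(0,T)$ and continuous up to $0$ so that the fundamental theorem of calculus applies — this follows from the regularity $u,v\in C^0([0,\Tmaxd);W^{1,q}(\Omega))\cap C^{2,1}(\Ombar\times(0,\Tmaxd))$ from Lemmas~\ref{local-2sp} and \ref{W1q:regularity}, which makes $t\mapsto\io|∇u|^q$ differentiable on $(0,T)$ with the derivative identity used in Lemma~\ref{control_grad} valid there, and continuous at $t=0$; and (b) verifying that every argument of a logarithm stays $\ge 1$, which is guaranteed by the choice of the additive constant $e$ in the definition of $y$ (this is exactly why $e$, rather than $1$, appears in \eqref{int_ineq}). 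The substitution $z=\log\log y$ is the engine of the proof and is available precisely because $y\ge e$.
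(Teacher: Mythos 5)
Your approach is essentially the same as the paper's: coarse-grain the differential inequality from Lemma~\ref{control_grad} into one of the form $y'\le h(\log y)\,y$ for the shifted quantity $y=e+\io|\nabla u|^q+\io|\nabla v|^q$, and then integrate by the substitution $z=\log\log y$, which is precisely the ODE-comparison step the paper leaves implicit in its closing ``This implies'' line. (The paper works with the unshifted quantity and reaches the equivalent form $y'\le h\,(e+y)\log(e+y)$.) There is one small bookkeeping slip in the intermediate step: from the bounds you state, namely $C(1+f)\le 2C(1+2m^{q+1})$, $Cf\le 2Cm^{q+1}$ and $Cg\le 4Cm^{q+1}y$, the non-logarithmic coefficient of $y$ collects to $2C(1+2m^{q+1})+4Cm^{q+1}=h$, not $\tfrac12 h$, so you actually obtain $y'\le h\,y+2Cm^{q+1}(\log y)\,y$, and then using $\log y\ge 1$ gives $y'\le(h+2Cm^{q+1})(\log y)\,y$, whose prefactor slightly exceeds $h$. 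The fix is trivial: keep the tighter estimate $C(1+f)\le C(1+2m)$ (drop the gratuitous factor $2$), first absorb every term into the $(\log y)\,y$ form via $\log y\ge1$, and only then invoke $m\le m^{q+1}$; this yields $y'\le\bigl(C+4Cm+4Cm^{q+1}\bigr)(\log y)\,y\le\bigl(2C+8Cm^{q+1}\bigr)(\log y)\,y=h(\log y)\,y$, after which your $\log\log$-substitution concludes the proof exactly as you describe.
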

%%%%%%%%%%%%%%%%%%%%%%%%%%%%%%%%%%%%%%%%%%%%%%%%%%%%%%%%%%%%%%%%%%
%------------------------------proof------------------------------%
\begin{proof}
Let $y(t) := \io |∇u(\cdot, t)|^q + \io |∇v(\cdot, t)|^q$.
By Lemma~\ref{control_grad},
\begin{align}\label{di:grad}
    y'(t) \le C(1 +  f(t)) y(t)
    + C\cdot f(t) \log(1 + y(t)) y(t)
    + C\cdot g(t)
\end{align}
for all $t \in (0,T)$.
Here, we define
\[
    m(t) := 4 \max\set{
      \norm[\Lom\infty]{u(t)}, \norm[\Lom\infty]{v(t)}, 1}^{q+1},
    \quad t \in (0, T).
\]
Then we have $F(t) \le m(t)$ and $G(t) \le m(t)$
for all $t \in (0,T)$,
so we can estimate the right-hand side of \eqref{di:grad} as
\begin{align}
\notag
    &C(1 +  f(t)) y(t)
    + C\cdot f(t) \log(1 + y(t)) y(t)
    + C\cdot g(t)
\\ \notag
    &\quad\, \le C(1 + m(t))
      [y(t) + \log(1 + y(t))y(t) + 1]
%\\ \notag
%    &\quad\, \le C(1 + m(t))
%      [\log(e + y(t))y(t) + \log(e + y(t))y(t) + \log(e + y(t))]
%\\ \notag
%    &\quad\, = C(1 + m(t)) (1 + 2y(t))\log(e + y(t))
\\
    &\quad\, \le 2C(1 + m(t)) (e + y(t))\log(e + y(t))
      \quad\mbox{for all}\ t \in (0, T),
      \label{di:grad:right}
\end{align}
where we also used the fact
$1 \le \log(e + y(t))$ for all $t \in (0, T)$.
From \eqref{def:h} we know $h(t) = 2C(1 + m(t))$, so
we can infer from \eqref{di:grad} and \eqref{di:grad:right} that
\[
    y'(t) \le h(t)(e + y(t))\log(e + y(t))
       \quad\mbox{for all}\ t \in (0, T).
\]
This implies
%
%\begin{align*}
%    \int_{0}^{t} h(s) \, ds
%    &\ge \int_{0}^{t} \dfrac{y'(s)}{(e + y(s))\log(e + y(s))}\, ds
%\\
%    &= \log(\log(e + y(t))) - \log(\log(e + y(0)))
%      \quad\mbox{for all}\ t \in (0, T),
%\end{align*}
%
%whereas we compute
%
%\[
%    \int_{0}^{t} \dfrac{y'(s)}{(e + y(s))\log(e + y(s))}\, ds
%    = \log(\log(e + y(t))) - \log(\log(e + y(0))),
%\]
%
%and we obtain
%
%\[
%    \log(\log(e + y(t)))
%    \le \log(\log(e + y(0))) + \int_{0}^{t} h(s) \, ds
%      \quad\mbox{for all}\ t \in (0, T).
%\]
%
%Therefore, we have
%
%\begin{align*}
%    \log(e + y(t))
%    &\le \exp\left(
%      \log(\log(e + y(0))) + \int_{0}^{t} h(s) \, ds
%      \right)
%\\
%    &= \log(e + y(0)) \exp\left(\int_{0}^{t} h(s) \, ds \right),
%\end{align*}
%%
%and
%
\[
    e + y(t) \le
      \exp\left(
      \log(e + y(0)) \exp\left(\int_{0}^{t} h(s) \, ds \right)
      \right)
\]
for all $t \in (0, T)$, which completes the proof.
\end{proof}
%-----------------------------------------------------------------%

%\newpage
%
%%==============================================================%%
%%                                        Subsection 3.3                                      %%
%%                       Solutions to (1.1) exist long enough                             %%
%%==============================================================%%
\subsection{Solutions to \tops{\eqref{2sp-LV}}{(1.1)} exist long enough}

We now show the following lemma which guarantees that
solutions to \eqref{2sp-LV} exist in $d_1, d_2$-independent time.
The idea of the proof is based on \cite[Lemma 3.9]{L-2015},
however, unlike in \cite{L-2015} we have derived the differential inequality
for $\io |∇u(\cdot, t)|^q + \io |∇v(\cdot, t)|^q$ in Lemma~\ref{control_grad}.
Therefore we will use this instead of the integral inequality \eqref{int_ineq}
and establish the result by a comparison argument.

%
%%%%%%%%%%%%%%%%%%%%%%%%%%%%%%%%%%%%%%%%%%%%%%%%%%%%%%%%%%%%%%%%%%
%                                                 Lemma                                             %
%%%%%%%%%%%%%%%%%%%%%%%%%%%%%%%%%%%%%%%%%%%%%%%%%%%%%%%%%%%%%%%%%%
%
\begin{lem}\label{2sp-enough}
(Cf. {\rm \cite[Lemma 3.9]{L-2015}})
Let $q > \max\{n, 2\}$,
and $d_{*} > 0$ be as same constant as in Lemma~\ref{control_grad}.
Then for all $D > 0$ there exist $T(D) > 0$ and $M(D) > 0$ such that
for any nonnegative $u_0, v_0 \in W^{1,q}(\Omega)$ with
\begin{align}\label{initial:W1q:bound}
    \|u_0\|_{W^{1,q}(\Omega)}, \|v_0\|_{W^{1,q}(\Omega)} \le D,
\end{align}
for all $d_1, d_2 \in (0, d_{*})$
the classical solution $(u,v,w)$ of \eqref{2sp-LV} exists
on $\Omega \times (0, T(D))$ and satisfies
\[
    \|u\|_{L^{\infty}(\Omega \times (0, T(D)))}
    + \|v\|_{L^{\infty}(\Omega \times (0, T(D)))} \le M(D).
\]
\end{lem}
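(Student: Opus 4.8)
The plan is to bootstrap from the local existence result (Lemma~\ref{local-2sp}) together with the blow-up criterion, using the differential inequality from Lemma~\ref{control_grad} and the standard $L^p\hookrightarrow L^\infty$ semigroup machinery. The key point is that everything must be done \emph{uniformly in $d_1,d_2\in(0,d_*)$}, which is exactly why Lemma~\ref{control_grad} was phrased with the $d_*$-independent constant $C$.

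First I would set up a barrier. Fix $D>0$ and assume \eqref{initial:W1q:bound}. By Lemma~\ref{L1-2sp} the $L^1$-norms of $u,v$ stay bounded (by $\max\{|\Omega|,D|\Omega|^{1/q'}\}$ or similar), so together with the embedding $W^{1,q}\hookrightarrow L^\infty$ (valid since $q>n$) I get a bound on $\|u(\cdot,t)\|_{L^\infty}+\|v(\cdot,t)\|_{L^\infty}$ purely in terms of $\io|\nabla u|^q+\io|\nabla v|^q$ and the fixed $L^1$-bound; call this bound $\Phi\big(\io|\nabla u|^q+\io|\nabla v|^q\big)$ for a fixed increasing function $\Phi$ depending only on $D,q,\Omega$. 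Plugging this into $f,g$ from \eqref{def:f}, \eqref{def:g} and into Lemma~\ref{control_grad}, the quantity $y(t):=\io|\nabla u(\cdot,t)|^q+\io|\nabla v(\cdot,t)|^q$ satisfies an autonomous differential inequality $y'(t)\le \Psi(y(t))$ where $\Psi$ is a fixed (super-linear, but locally finite) function independent of $d_1,d_2$, and $y(0)\le cD^q$. Comparison with the ODE $z'=\Psi(z)$, $z(0)=cD^q$, which has some positive existence time $T_0(D)$ and stays below some $M_0(D)$ on $[0,T_0(D)/2]$, then gives a $d_1,d_2$-independent time $T(D)$ and a bound $y(t)\le M_0(D)$ on $(0,T(D))$ — as long as the classical solution exists that far. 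I would take $T(D)$ to be $T_0(D)/2$ (or any fixed fraction).

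Second, I would close the argument by contradiction with the blow-up criterion. Suppose $\Tmaxd< T(D)$ for some admissible $d_1,d_2$. On $(0,\Tmaxd)$ the comparison above is valid (the solution exists there), so $y(t)\le M_0(D)$, hence by the $L^\infty$-control via $\Phi$, $\|u(\cdot,t)\|_{L^\infty}+\|v(\cdot,t)\|_{L^\infty}\le \Phi(M_0(D))=:M(D)$ on $(0,\Tmaxd)$, uniformly. But this contradicts $\limsup_{t\nearrow\Tmaxd}(\|u(\cdot,t)\|_{L^\infty}+\|v(\cdot,t)\|_{L^\infty})=\infty$ from Lemma~\ref{local-2sp}. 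Therefore $\Tmaxd\ge T(D)$ for every $d_1,d_2\in(0,d_*)$, and the stated uniform $L^\infty$-bound $M(D)$ holds on $\Omega\times(0,T(D))$.

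The main obstacle is making sure the comparison step is rigorous despite the circularity (the bound on $y$ needs the solution to exist; existence needs the bound): the clean way is a continuity/open-closed argument — let $t_0$ be the supremum of times $\tau\le\min\{\Tmaxd,T(D)\}$ on which $y\le M_0(D)$; on $[0,t_0)$ the differential inequality holds and, using that $\Psi$-comparison gives $y<M_0(D)$ strictly on $[0,T(D))$, one rules out $y(t_0)=M_0(D)$, forcing $t_0=\min\{\Tmaxd,T(D)\}$ and then, via the blow-up criterion, $t_0=T(D)$. A secondary technical point is the continuity of $t\mapsto y(t)$ at $t=0$, which is precisely guaranteed by Lemma~\ref{W1q:regularity} (so no compatibility condition on $u_0,v_0$ is needed); this is the improvement over \cite{L-2015,X-2020} that the preceding discussion emphasized, and I would cite Lemma~\ref{W1q:regularity} at exactly this spot.
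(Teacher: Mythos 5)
Your proposal is correct and follows essentially the same route as the paper: close the differential inequality from Lemma~\ref{control_grad} via the $L^1$-bound of Lemma~\ref{L1-2sp} and the embedding $W^{1,q}\hookrightarrow L^\infty$, compare with the resulting autonomous ODE to get a $d_1,d_2$-independent time $T(D)$ and gradient bound, and then invoke the extensibility criterion of Lemma~\ref{local-2sp} to conclude existence up to $T(D)$. The paper simply makes the comparison ODE explicit (a polynomial right-hand side obtained after using $\log(1+y)\le y$) rather than abstracting it into $\Psi$, and it phrases the final step as ``it suffices to bound $\|u\|_{L^\infty}+\|v\|_{L^\infty}$'' rather than a contradiction, but these are cosmetic; your remark about the need for continuity of $y$ at $t=0$ via Lemma~\ref{W1q:regularity} correctly identifies the technical point the authors also emphasize.
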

%%%%%%%%%%%%%%%%%%%%%%%%%%%%%%%%%%%%%%%%%%%%%%%%%%%%%%%%%%%%%%%%%%
%------------------------------proof------------------------------%
\begin{proof}
Fix $C > 0$ as well as in Lemma~\ref{control_grad}.
We also fix constants $C_1, C_2 > 0$ such that
\begin{align}\label{esti:MD1}
    \|\psi\|_{L^{\infty}(\Omega)}
    \le C_1 \|∇\psi\|_{L^q(\Omega)} + C_1 \|\psi\|_{L^1(\Omega)}
  \quad\mbox{and}\quad
    \|\psi\|_{L^1(\Omega)} \le C_2 \|\psi\|_{W^{1,q}(\Omega)}
\end{align}
for all $\psi \in W^{1,q}(\Omega)$, and let
\begin{align*}
    C_3(D) := \max\{|\Omega|, C_2 D\}.
\end{align*}
We can infer from continuity and an argument based on
local well-posedness that
there is $T(D) > 0$ such that the problem
\[
    \begin{cases}
    %\begin{split}
    y_{D}'(t) = 
    2C C_1 y_{D}^{2+\frac{1}{q}}(t)
        + 2C C_1 C_3(D) y_{D}^{2}(t)
      + 2CC_1(3^q C_1^q + 1) y_{D}^{1+\frac{1}{q}}(t)
\\
      \qquad \quad\quad\, + C(2C_1 C_3(D) + 1) y_{D}(t) + C(2 C_1 C_3(D))^{q+1},
     % \end{split}
  \\
    y_{D}(0) = 2D^q
  \end{cases}
\]
possesses a solution $y_{D}$, which satisfies
\begin{align}\label{comp:esti}
    y_{D}(t) \le 2D^q + 1
    \quad\mbox{for all}\ t \in (0, T(D)).
\end{align}
Now, we let $u_0, v_0 \in W^{1,q}(\Omega)$ be nonnegative
and satisfy \eqref{initial:W1q:bound},
and for fixed $d_1, d_2 \in (0, d_{*})$
we let $(u,v,w)$ be the corresponding solution of \eqref{2sp-LV}.
In light of Lemma~\ref{local-2sp},
it is sufficient to show boundedness of
$\|u\|_{L^{\infty}(\Omega)} + \|v\|_{L^{\infty}(\Omega)}$
on $(0, T(D))$.
To see this, we use the differential inequality obtained in
Lemma~\ref{control_grad}.
Here, by Lemma~\ref{L1-2sp} we have
\begin{align}\label{esti:MD2}
    \|u(\cdot, t)\|_{L^1(\Omega)} + \|v(\cdot, t)\|_{L^1(\Omega)}
    \le 2 C_3 (D)
  \quad\mbox{for all}\ t \in (0, T(D)),
\end{align}
which implies that
\begin{align}\label{fD}
\notag
    f(t) &\le C_1 (\|∇u(\cdot, t)\|_{L^q(\Omega)}
        + \|∇v(\cdot, t)\|_{L^q(\Omega)})
        + C_1 (\|u(\cdot, t)\|_{L^1(\Omega)} 
          + \|v(\cdot, t)\|_{L^1(\Omega)})
\\
    &\le C_1 (\|∇u(\cdot, t)\|_{L^q(\Omega)}
        + \|∇v(\cdot, t)\|_{L^q(\Omega)})
        + 2 C_1 C_3(D)
\end{align}
for all $t \in (0, T(D))$.
Moreover, we infer from \eqref{fD} that
\begin{align}\label{gD}
    g(t) \le [f(t)]^{q+1}
    \le 3^q C_1^{q+1}
      (\|∇u(\cdot, t)\|_{L^q(\Omega)}^{q+1}
        + \|∇v(\cdot, t)\|_{L^q(\Omega)}^{q+1})
      + 3^q (2 C_1 C_3(D))^{q+1}
\end{align}
for all $t \in (0, T(D))$.
Here we abbreviate
$y(t) := \io |∇u(\cdot, t)|^q + \io |∇v(\cdot, t)|^q$.
Then we have
$\|∇u(\cdot, t)\|_{L^q(\Omega)} \le y^{\frac{1}{q}}(t)$
and
$\|∇v(\cdot, t)\|_{L^q(\Omega)} \le y^{\frac{1}{q}}(t)$,
so as a consequence of \eqref{fD} and \eqref{gD},
Lemma~\ref{control_grad} entails that
\begin{align}
\notag
    y'(t) &\le C(1 + 2 C_1 y^{\frac{1}{q}}(t) + 2C_1 C_3(D))y(t)
\\ \notag
    &\quad + C(2 C_1 y^{\frac{1}{q}}(t) + 2C_1 C_3(D)) \log(1 + y(t)) y(t)
\\ \notag
    &\quad + C(2\cdot 3^q C_1^{q+1} y^{\frac{q+1}{q}}(t)
        + 3^q (2C_1 C_3(D))^{q+1})
\\ \notag
    &\le 2C C_1 y^{2+\frac{1}{q}}(t) + 2C C_1 C_3(D) y^2(t)
      + 2CC_1(3^q C_1^q + 1) y^{1+\frac{1}{q}}(t)
\\
    &\quad + C(2C_1 C_3(D) + 1) y(t) + C(2 C_1 C_3(D))^{q+1}
    \label{comp:diff1}
\end{align}
for all $t \in (0, T(D))$,
where we also used the fact that
$\log (1 + z) \le z$ for any $z \ge 0$.
Additionally,
\begin{align}\label{comp:diff2}
    y(0) = \io |∇u_0|^q + \io |∇v_0|^q \le 2 D^q.
\end{align}
According to \eqref{comp:diff1} and \eqref{comp:diff2},
by an ODE comparison and \eqref{comp:esti},
we therefore conclude that
\[
    y(t) \le y_{D}(t) \le 2D^q + 1
\]
for all $t \in (0, T(D))$.
Along with \eqref{esti:MD1} and \eqref{esti:MD2},
we can see that
\[
    \|u(\cdot, t)\|_{L^{\infty}(\Omega)}
    + \|v(\cdot, t)\|_{L^{\infty}(\Omega)}
    \le 2C_1 (2D^q + 1)^{\frac{1}{q}} + 2C_1 C_3 (D)
    =: M(D)
\]
for all $t \in (0, T(D))$.
\end{proof}
%-----------------------------------------------------------------%

%\newpage
%
%%==============================================================%%
%%                                        Subsection 3.4                                      %%
%%                             Boundedness of u_t and v_t                        %%
%%==============================================================%%
\subsection{Boundedness of \tops{$u_t$ and $v_t$}{ut and vt}}

As a preparation, we will show some regularity of
the time derivatives of
bounded solutions in an appropriate space
in the following lemma.
The statement and the proof parallel \cite[Lemma 3.10]{L-2015}.
%
%%%%%%%%%%%%%%%%%%%%%%%%%%%%%%%%%%%%%%%%%%%%%%%%%%%%%%%%%%%%%%%%%%
%                                                 Lemma                                             %
%%%%%%%%%%%%%%%%%%%%%%%%%%%%%%%%%%%%%%%%%%%%%%%%%%%%%%%%%%%%%%%%%%
%
\begin{lem}\label{utvt}
(Cf. {\rm \cite[Lemma 3.10]{L-2015}})
Let $q > \max\{n, 2\}$, $T > 0$, $M > 0$
and $u_0, v_0 \in W^{1,q}(\Omega)$.
Then for all $d_{*} > 0$ and $p \in (1,\infty)$
there is $C > 0$ such that
the following holds for $d_1, d_2 \in (0, d_{*})$\/{\rm :}
If a solution
$(u,v,w)$
of \eqref{2sp-LV}
in $\Omega \times (0,T)$
fulfills
\[
    |u(x,t)| + |v(x,t)| \le M
\]
for all $(x, t) \in \Omega \times (0, T)$, then
\[
    \|u_t\|_{L^p((0,T); \, (W^{1, \frac{q}{q-1}}(\Omega))^{*})}
    \le C
      \quad\mbox{and}\quad
    \|v_t\|_{L^p((0,T); \, (W^{1, \frac{q}{q-1}}(\Omega))^{*})}
    \le C.
\]
\end{lem}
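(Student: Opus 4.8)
The plan is to test each equation against an arbitrary $\varphi \in W^{1,\frac{q}{q-1}}(\Omega)$ and bound the resulting pairing uniformly in $t$ (hence in $L^p((0,T))$, since $T<\infty$), using the pointwise bound $|u|+|v|\le M$ together with the already-established control of $\|\nabla u(\cdot,t)\|_{L^q}$ and $\|\nabla v(\cdot,t)\|_{L^q}$. First I would write, for a.e.\ $t\in(0,T)$ and all $\varphi\in W^{1,\frac{q}{q-1}}(\Omega)$,
\[
  \langle u_t(\cdot,t),\varphi\rangle
  = -d_1\io \nabla u\cdot\nabla\varphi
    + \chi_1\io u\,\nabla w\cdot\nabla\varphi
    + \mu_1\io u(1-u-a_1 v)\,\varphi,
\]
which follows from the weak formulation of the first equation in \eqref{2sp-LV} (the boundary terms vanish by the Neumann conditions). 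The three terms are then estimated separately by Hölder's inequality with exponents $q$ and $\frac{q}{q-1}$ (for the first two) and with $\infty$ and $1$ (for the reaction term): the diffusion term is bounded by $d_{*}\|\nabla u\|_{L^q}\|\nabla\varphi\|_{L^{q/(q-1)}}$, the chemotaxis term by $\chi_1 M\|\nabla w\|_{L^q}\|\nabla\varphi\|_{L^{q/(q-1)}}$, and the reaction term by $\mu_1(M + M^2 + a_1 M^2)\,|\Omega|^{1-1/q}\,\|\varphi\|_{L^q}\le C\|\varphi\|_{W^{1,q/(q-1)}(\Omega)}$ by the Sobolev embedding $W^{1,q/(q-1)}(\Omega)\hookrightarrow L^q(\Omega)$ (valid since $q>n$ forces $\frac{q}{q-1}<\frac{n}{n-1}$, but one can also just use a cruder embedding into $L^{q}$ for $q$ below the Sobolev exponent, or interpolate — the point is that $W^{1,q/(q-1)}\hookrightarrow L^q$ holds for the relevant range). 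Taking the supremum over $\|\varphi\|_{W^{1,q/(q-1)}}\le 1$ gives
\[
  \|u_t(\cdot,t)\|_{(W^{1,q/(q-1)}(\Omega))^{*}}
  \le d_{*}\|\nabla u(\cdot,t)\|_{L^q(\Omega)}
     + \chi_1 M\|\nabla w(\cdot,t)\|_{L^q(\Omega)}
     + C(M,\mu_1,a_1,|\Omega|).
\]

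Next I would control the right-hand side uniformly on $(0,T)$. For $\|\nabla w(\cdot,t)\|_{L^q}$ this is immediate from Lemma~\ref{max_reg:elli}(i), which gives $\|\nabla w(\cdot,t)\|_{L^q}\le\|w(\cdot,t)\|_{W^{2,q}}\le C(\|u(\cdot,t)\|_{L^q}+\|v(\cdot,t)\|_{L^q})\le 2CM|\Omega|^{1/q}$ thanks to the pointwise bound. The remaining ingredient is a uniform-in-$t$ bound for $\|\nabla u(\cdot,t)\|_{L^q}$ and $\|\nabla v(\cdot,t)\|_{L^q}$ on $(0,T)$: this follows from Lemma~\ref{kari} (equivalently, from the differential inequality of Lemma~\ref{control_grad} integrated against the now-bounded functions $f$ and $g$, since $f(t)\le M$ and $g(t)\le M^{q+1}$ on $(0,T)$), which yields a finite bound depending only on $q$, $d_{*}$, $M$, $T$ and $\|\nabla u_0\|_{L^q}+\|\nabla v_0\|_{L^q}$. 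Combining, $\|u_t(\cdot,t)\|_{(W^{1,q/(q-1)})^{*}}$ is bounded by a constant independent of $t\in(0,T)$ (and of $d_1,d_2\in(0,d_{*})$), so raising to the $p$-th power and integrating over the finite interval $(0,T)$ gives the claimed bound on $\|u_t\|_{L^p((0,T);(W^{1,q/(q-1)})^{*})}$. The estimate for $v_t$ is obtained in exactly the same way from the second equation of \eqref{2sp-LV}.

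The argument is essentially routine once the $W^{1,q}$-gradient bound is in hand; the only point requiring a little care is the Sobolev embedding used on the reaction term, i.e.\ checking that $W^{1,\frac{q}{q-1}}(\Omega)$ embeds into $L^q(\Omega)$ for the given $q$ — if $\frac{q}{q-1}$ is below the Sobolev threshold one uses the Sobolev embedding, and otherwise (which happens only for small $q$) one has a continuous embedding into all $L^r$ anyway; either way the constant depends only on $q$ and $\Omega$. I do not expect a serious obstacle here, since all the hard analytic work — the $d_1,d_2$-independent control of $\|\nabla u\|_{L^q}$ and $\|\nabla v\|_{L^q}$ via the BMO-type estimate of Lemma~\ref{D2w} — has already been carried out in Lemmas~\ref{control_grad}, \ref{kari} and \ref{2sp-enough}.
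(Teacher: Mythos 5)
Your proposal follows the same route as the paper: test each equation against a function in $W^{1,\frac{q}{q-1}}(\Omega)$, estimate the three terms by H\"older, and invoke Lemma~\ref{kari} and Lemma~\ref{max_reg:elli} to control $\|\nabla u(\cdot,t)\|_{L^q(\Omega)}$, $\|\nabla v(\cdot,t)\|_{L^q(\Omega)}$ and $\|\nabla w(\cdot,t)\|_{L^q(\Omega)}$ uniformly on $(0,T)$, then integrate in time. The structure is correct.

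However, the Sobolev embedding you invoke for the reaction term is false. You claim $W^{1,\frac{q}{q-1}}(\Omega)\hookrightarrow L^q(\Omega)$ ``holds for the relevant range,'' but it does not: for $n\ge 3$ and $q>n$ one has $r:=\frac{q}{q-1}<\frac{n}{n-1}<n$ and the Sobolev exponent is $r^*=\frac{nr}{n-r}=\frac{nq}{(n-1)q-n}$, which satisfies $r^*<q$ whenever $q>\frac{2n}{n-1}$ (in particular for all $q>n$ when $n\ge 3$, e.g.\ $n=3$, $q=4$ gives $r^*=\frac{12}{5}<4$). The same failure occurs for $n=2$ once $q>4$. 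So the intermediate step $\|\varphi\|_{L^q(\Omega)}\le C\|\varphi\|_{W^{1,q/(q-1)}(\Omega)}$ is unjustified.

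The fix is trivial and is exactly what the paper does: since $|u(1-u-a_1v)|\le M(1+M+a_1M)$ pointwise, bound the reaction term directly by
\[
  \mu_1 M(1+M+a_1M)\,\|\varphi\|_{L^1(\Omega)}
  \le \mu_1 M(1+M+a_1M)\,|\Omega|^{1/q}\,\|\varphi\|_{L^{q/(q-1)}(\Omega)}
  \le C\|\varphi\|_{W^{1,q/(q-1)}(\Omega)},
\]
which uses only H\"older on the bounded domain, and no Sobolev embedding at all. You went through $L^q(\Omega)$ unnecessarily; the natural pairing is $L^\infty\times L^1$, and $L^1$ is controlled by $L^{q/(q-1)}$ since $\frac{q}{q-1}>1$. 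With that correction the argument is complete and coincides with the paper's.
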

%%%%%%%%%%%%%%%%%%%%%%%%%%%%%%%%%%%%%%%%%%%%%%%%%%%%%%%%%%%%%%%%%%
%------------------------------proof------------------------------%
\begin{proof}
We multiply the first equation of \eqref{2sp-LV} by $\psi\in C^1(\Ombar)$
and integrate over $\Omega$ to obtain
  \begin{align}\label{ut:psi}
\notag
    \left| \io u_t \psi \right|
%     &= \left| \io \big( d_1 \Delta u \psi
%          - \chi_1 ∇ \cdot (u ∇ w) \psi
%          + \mu_1 u(1 - u - a_1 v)\psi \big) \right|
% \\ \notag
    &\le d_1 \left| \io ∇u \cdot ∇\psi \right|
        + \chi_1 \left| \io u ∇w \cdot ∇\psi \right|
\\
    &\quad\, + \mu_1 \|u\|_{L^{\infty}(\Omega)}
          (1 + \|u\|_{L^{\infty}(\Omega)} 
             + a_1 \|v\|_{L^{\infty}(\Omega)})
           \left| \io \psi \right|.  
  \end{align}
for all $t\in (0,T)$.
By virtue of Lemma~\ref{kari} and Lemma~\ref{max_reg:elli} having bounds for $\norm[\Lom q]{∇u(\cdot,t)}$ and $\norm[\Lom q]{∇w(\cdot,t)}$, uniform in $t\in (0,T)$, we can derive that with some $C>0$, 
\[
 \left| \io u_t \psi \right|
  \le C \|\psi\|_{W^{1,\frac{q}{q-1}}(\Omega)}
\]
holds for every $\psi\in C^1(\Ombar)$ and throughout $(0,T)$, and hence $\|u_t(\cdot,t)\|_{(W^{1, \frac{q}{q-1}}(\Omega))^{*}}\le C$ for all $t\in(0,T)$. Accordingly,
    \[
  \left( \int_{0}^{T} 
      \|u_t\|_{(W^{1, \frac{q}{q-1}}(\Omega))^{*}}^{p}
  \right)^{\frac{1}{p}}
  \le C T^{\frac{1}{p}}.    
\]
The proof for $v$ proceeds analogously.
\end{proof}
%-----------------------------------------------------------------%

%\newpage
%%==============================================================%%
%%==============================================================%%
%%                                               Section 4                                              %%
%%                             Hyperbolic--hyperbolic--elliptic case                              %%
%%==============================================================%%
%%==============================================================%%
\section{Hyperbolic--hyperbolic--elliptic case}\label{HHE}

In this section
we consider the local well-posedness of \eqref{HHE-LV}.
Based on the argument in Section~\ref{PPE},
we will prove that the solution to \eqref{HHE-LV}
is an approximating solution of \eqref{2sp-LV}.

The concept of the solution and limiting procedure
is similar to \cite{X-2020} and \cite{L-2015}.
Compared to these works, thanks to Section~\ref{W1,q-Linf},
some of the assumptions of initial data are removed.

Here and throughout the sequel,
we denote the solution of \eqref{2sp-LV}
with any $d_1, d_2 > 0$ by
$(u_{(d_1, d_2)}, v_{(d_1, d_2)}, w_{(d_1, d_2)})$.

%
%%==============================================================%%
%%                                               Subsection 4.1                                      %%
%%           Definition and uniqueness of strong W^1,q-solution                            %%
%%==============================================================%%
\subsection{Definition and uniqueness of the strong \tops{$W^{1,q}$}{W1q}-solution}

Following \cite{W-2014,L-2015,X-2020}, we first give the solution concept for \eqref{HHE-LV}.

%
%----------------------------------------------------------------%
%----------------------------------------------------------------%
%                                               Definition                                             %
%----------------------------------------------------------------%
%----------------------------------------------------------------%
%
\begin{df}\label{df:W1q}
Let $d_3, \chi_1, \chi_2, \mu_1, \mu_2 > 0$, $a_1, a_2 \ge 0$,
$\alpha, \beta, \gamma > 0$ 
and $T \in (0, \infty]$.
A pair $(u, v,w)$ of functions is called a
\emph{strong $W^{1,q}$-solution} of \eqref{HHE-LV}
in $\Omega \times (0,T)$ if
\begin{enumerate}[{\rm (i)}]
%%%%%%%%%%%%%%%%%%%%%%%%%%%%%%%%
\item
$u, v \in C^0(\Ombar \times [0,T)) \cap 
     L^{\infty}_{{\rm loc}}([0,T); W^{1,q}(\Omega))$
and
$w \in C^{2,0}(\Ombar \times [0,T))$,
%%%%%%%%%%%%%%%%%%%%%%%%%%%%%%%%
\item
$u, v, w$ are nonnegative,
%%%%%%%%%%%%%%%%%%%%%%%%%%%%%%%%
\item
$w$ classically solves
\[
    \begin{cases}
    0 = d_3 \Delta w + \alpha u + \beta v - \gamma w,
    & x\in\Omega,\ t \in (0,T),
  \\
   ∇ w \cdot \nu = 0,
    & x\in \pa\Omega,\ t \in (0,T),
  \end{cases}
\]
%
%%%%%%%%%%%%%%%%%%%%%%%%%%%%%%%%
\item for any $\varphi \in C_0^\infty(\Ombar\times[0,T))$
\begin{align}\label{strong:u}
    - \int_{0}^{T} \int_{\Omega} u \varphi_t
    -\int_{\Omega} u_0 \varphi(\cdot, 0) 
   = \chi_1 \int_{0}^{T} \int_{\Omega} u ∇w\cdot∇\varphi
        + \int_{0}^{T} \int_{\Omega} 
          \mu_1 u (1 - u - a_1 v)\varphi
\end{align}
and
\begin{align}\label{strong:v}
    - \int_{0}^{T} \int_{\Omega} v \varphi_t
    -\int_{\Omega} v_0 \varphi(\cdot, 0) 
    = \chi_2 \int_{0}^{T} \int_{\Omega} v ∇w\cdot∇\varphi
        + \int_{0}^{T} \int_{\Omega} 
          \mu_2 v (1 - a_2 u - v)\varphi
\end{align}
hold true.
\end{enumerate}
%
%In the case $T = \infty$,
%we also call $(u,v,w)$ a \emph{global strong $W^{1,q}$-solution}
%of \eqref{HHE-LV}.
\end{df}
%----------------------------------------------------------------%
%

We next claim that these $W^{1,q}$-solutions of \eqref{HHE-LV}
are unique,
and we also state continuous dependence of $W^{1,q}$-solutions to
\eqref{HHE-LV} with respect to the initial data.

%
%%%%%%%%%%%%%%%%%%%%%%%%%%%%%%%%%%%%%%%%%%%%%%%%%%%%%%%%%%%%%%%%%%
%                                                 Lemma                                             %
%%%%%%%%%%%%%%%%%%%%%%%%%%%%%%%%%%%%%%%%%%%%%%%%%%%%%%%%%%%%%%%%%%
%
\begin{lem}\label{unique:HHE}
Let $q > n$, $T \in (0, \infty]$,
and let $u_0, v_0 \in W^{1,q}(\Omega)$ with $u_0, v_0 \ge 0$.
Then problem \eqref{HHE-LV} possesses
at most one strong $W^{1,q}$-solution in $\Omega \times (0,T)$.
In addition, suppose that
$(u_{0j})_{j \in \N} \subset W^{1,q}(\Omega)$,
$(v_{0j})_{j \in \N} \subset W^{1,q}(\Omega)$
are sequences of nonnegative functions satisfying \eqref{ini:con}.
Assume that there are $(u, v, w)$, $(u_j, v_j, w_j)$ which form
the $W^{1,q}$-solution of \eqref{HHE-LV}
in $\Omega \times (0, T)$ for some $T > 0$ with initial data $u_0, v_0$,
and $u_{0j}, v_{0j}$, respectively.
Then the property \eqref{sol:con} holds for any $t \in (0, T)$.
\end{lem}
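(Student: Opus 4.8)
The plan is to prove uniqueness and continuous dependence simultaneously by a single stability estimate: given two $W^{1,q}$-solutions $(u,v,w)$ and $(\tilde u, \tilde v, \tilde w)$ with (possibly different) initial data $u_0,v_0$ and $\tilde u_0, \tilde v_0$, I would control $\|(u-\tilde u)(\cdot,t)\|_{L^q} + \|(v-\tilde v)(\cdot,t)\|_{L^q}$ in terms of $\|u_0-\tilde u_0\|_{L^q} + \|v_0-\tilde v_0\|_{L^q}$ via a Gronwall argument. Uniqueness is then the special case of equal initial data, and the convergence statement \eqref{sol:con} follows by letting $j\to\infty$ (the $W^{2,q}$-convergence of $w_j$ being an immediate consequence via the elliptic estimate in Lemma~\ref{max_reg:elli}(i) applied to $w_j - w$, whose source is $\alpha(u_j-u)+\beta(v_j-v)$).

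For the core estimate I would first note that, since solutions lie in $L^\infty_{\mathrm{loc}}([0,T);W^{1,q}(\Omega))$ with $q>n$, Sobolev embedding gives them a uniform $L^\infty$-bound $M$ on $\Omega\times(0,T')$ for any $T'<T$; likewise $\|\nabla w\|_{L^\infty}$ and $\|D^2 w\|$ are controlled via Lemmas~\ref{max_reg:elli} and \ref{bmo:elli}. Writing the equation for $U:=u-\tilde u$ in the form $U_t = -\chi_1\nabla\cdot(U\nabla w) - \chi_1\nabla\cdot(\tilde u\nabla(w-\tilde w)) + \mu_1 U(1-u-a_1v) + \mu_1\tilde u(-U - a_1 V)$, I would test with $|U|^{q-2}U$ (justified by an approximation/regularization since $U$ is only $W^{1,q}$ in space and $L^p$-in-time in a dual space for its derivative — here the $u_t\in L^p((0,T);(W^{1,q/(q-1)})^*)$-type regularity from Lemma~\ref{utvt}, or rather its analogue for \eqref{HHE-LV}, makes $\frac{d}{dt}\io|U|^q$ meaningful in the distributional sense). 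The advective term $-\chi_1\io|U|^{q-2}U\,\nabla\cdot(U\nabla w)$ integrates by parts to $\frac{\chi_1}{q}\io|U|^q\Delta w \le \frac{\chi_1}{q d_3}(\alpha\|u\|_\infty+\beta\|v\|_\infty+\gamma\|w\|_\infty)\io|U|^q$, i.e. it is absorbed into a Gronwall constant. The cross term $-\chi_1\io|U|^{q-2}U\,\nabla\cdot(\tilde u\nabla(w-\tilde w))$ is estimated, after integration by parts, by $C\io|U|^{q-1}(|\nabla(w-\tilde w)| + |\tilde u|^{-1}|\nabla\tilde u||w-\tilde w|)$ — wait, more carefully one keeps it as $\chi_1\io\nabla(|U|^{q-2}U)\cdot\tilde u\nabla(w-\tilde w)$, which produces a $\io|U|^{q-2}|\nabla U||w-\tilde w|_{W^{1,\cdot}}$ term; this is the delicate point.

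The main obstacle is precisely this cross term: it involves $\nabla U$, and $U$ is only in $W^{1,q}$ (no better), so one cannot simply dominate $\io|U|^{q-2}|\nabla U|\cdot(\text{stuff})$ by $\io|U|^q$ without spending a derivative. The standard resolution is to not differentiate $|U|^{q-2}U$ against it but instead write the chemotaxis contribution to $\frac{1}{q}\frac{d}{dt}\io|U|^q$ as $-\chi_1\io|U|^{q-2}U\nabla U\cdot\nabla w - \chi_1\io|U|^{q-1}\cdot(\text{sign})U\tilde u\,\Delta(w-\tilde w)$ type terms after the product rule, i.e. $\nabla\cdot(\tilde u\nabla(w-\tilde w)) = \nabla\tilde u\cdot\nabla(w-\tilde w) + \tilde u\Delta(w-\tilde w)$; here $\Delta(w-\tilde w) = d_3^{-1}(\gamma(w-\tilde w) - \alpha U - \beta V)$ from the elliptic equation, eliminating all second derivatives, and $\|\nabla(w-\tilde w)\|_{L^q}\le C(\|U\|_{L^q}+\|V\|_{L^q})$, $\|w-\tilde w\|_{L^\infty}\le C(\|U\|_{L^q}+\|V\|_{L^q})$ by Lemma~\ref{max_reg:elli}(i) with Sobolev embedding. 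The only surviving genuinely awkward term is $-\chi_1\io|U|^{q-2}U\,\nabla\tilde u\cdot\nabla(w-\tilde w)$, which one bounds by $\chi_1\|\nabla\tilde u\|_{L^q}\big(\io|U|^{q}\big)^{(q-1)/q}\|\nabla(w-\tilde w)\|_{L^q}^{?}$ — use Hölder with exponents $q/(q-2)$, $q$, $q$ won't quite close, so instead one estimates $\||U|^{q-1}\|_{L^{q/(q-1)}} = \|U\|_{L^q}^{q-1}$ against $\|\nabla\tilde u\|_{L^q}\|\nabla(w-\tilde w)\|_{L^\infty}$, and $\|\nabla(w-\tilde w)\|_{L^\infty}\le C\|w-\tilde w\|_{W^{2,q}}\le C(\|U\|_{L^q}+\|V\|_{L^q})$ again by Lemma~\ref{max_reg:elli}(i) and $q>n$. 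This yields, writing $E(t):=\|U(\cdot,t)\|_{L^q}^q + \|V(\cdot,t)\|_{L^q}^q$, a differential inequality $E'(t)\le C(M,T',\ldots)E(t)$ (all terms being of total degree $q$ in $(U,V)$), whence $E(t)\le E(0)e^{Ct}$; equal initial data forces $E\equiv 0$, giving uniqueness, and the general bound combined with $E_j(0)\to 0$ gives \eqref{sol:con}. A technical care point throughout is the rigorous justification of testing the weak formulation \eqref{strong:u}--\eqref{strong:v} by $|U|^{q-2}U$: I would do this by mollifying in time, or by first establishing (as in \cite{L-2015,W-2014}) that the $W^{1,q}$-solution has enough additional regularity on $(0,T)$ to make the computation classical, then passing $t\downarrow 0$ using continuity of $t\mapsto\|U(\cdot,t)\|_{L^q}$ which follows from $u,v\in C^0(\Ombar\times[0,T))$.
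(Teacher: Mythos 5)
Your proposal — subtracting the two solution triples, testing the evolution equation for $U=u-\tilde u$ and $V=v-\tilde v$ with $|U|^{q-2}U$ and $|V|^{q-2}V$, using the elliptic equation to replace $\Delta(w-\tilde w)$ and $W^{2,q}\hookrightarrow W^{1,\infty}$ to control $\nabla(w-\tilde w)$, and closing a Grönwall inequality for $\|U\|_{L^q}^q+\|V\|_{L^q}^q$ — is exactly the ``difference of two solutions plus Grönwall'' argument that the paper invokes by citing \cite[Lemma~4.2]{W-2014} and \cite[Lemma~4.2]{L-2015}, adapted here to the two-species coupling. You have correctly identified both the key cancellation (the self-advection term reducing to $\io|U|^q\Delta w$ after integration by parts) and the delicate cross term $\io|U|^{q-2}U\,\nabla\tilde u\cdot\nabla(w-\tilde w)$, handled via $\|\nabla(w-\tilde w)\|_{L^\infty}\le C\|w-\tilde w\|_{W^{2,q}}\le C(\|U\|_{L^q}+\|V\|_{L^q})$, so the approach and the estimates match the paper's intended route.
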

%%%%%%%%%%%%%%%%%%%%%%%%%%%%%%%%%%%%%%%%%%%%%%%%%%%%%%%%%%%%%%%%%%
%------------------------------proof------------------------------%
\begin{proof}
These follow from an investigation of the difference of
two solutions and Grönwall's inequality;
for details see
\cite[Lemma 4.2]{W-2014} and \cite[Lemma 4.2]{L-2015}.
\end{proof}

%
%%==============================================================%%
%%                                        Subsection 4.2                                      %%
%%                        Local existence and approximation                         %%
%%==============================================================%%
\subsection{Local existence and approximation}

In this section we will show that
the convergence of solutions to \eqref{2sp-LV}
toward strong $W^{1,q}$-solutions of \eqref{HHE-LV}.

The following lemma implies that
if the solutions of \eqref{2sp-LV} are uniformly bounded,
then we can construct strong $W^{1,q}$-solutions
by approximation.
This parallels \cite[Lemma 4.4]{L-2015}
and \cite[Lemma 3.2]{X-2020},
with small adjustments
due to the removal of approximation on initial data
and the upper bound of $d_1$ and $d_2$ in Lemma~\ref{kari}.

%
%%%%%%%%%%%%%%%%%%%%%%%%%%%%%%%%%%%%%%%%%%%%%%%%%%%%%%%%%%%%%%%%%%
%                                                 Lemma                                             %
%%%%%%%%%%%%%%%%%%%%%%%%%%%%%%%%%%%%%%%%%%%%%%%%%%%%%%%%%%%%%%%%%%
%
\begin{lem}\label{bdd:app}
(Cf. {\rm \cite[Lemma 4.4]{L-2015}})
Let $q > \max\{n, 2\}$ and
$u_0, v_0 \in W^{1,q}(\Omega)$ be nonnegative.
Suppose that $(d_{1,j})_j, (d_{2,j})_j \subset (0, \infty)$,
$T > 0$, $M > 0$ are such that
$d_{1,j} \searrow 0$ and $d_{2,j} \searrow 0$ as $j \to \infty$,
and such that
whenever $d_1 \in (d_{1,j})_j$ and $d_2 \in (d_{2,j})_j$,
for the solution
$(u_{(d_1, d_2)}, v_{(d_1, d_2)}, w_{(d_1, d_2)})$
of \eqref{2sp-LV} we have
\begin{align}\label{assume:bound_uv}
  u_{(d_1, d_2)} (x,t) + v_{(d_1, d_2)} (x,t) \le M
      \quad
        \mbox{for all}\ x \in \Omega
        \ \mbox{and}\ t \in (0,T).
\end{align}
Then there exists
a strong $W^{1,q}$-solution $(u,v,w)$ of \eqref{HHE-LV}
in $\Omega \times (0,T)$ such that
\begin{align*}
  &u_{(d_1, d_2)} \to u
    \quad\mbox{in}\ 
  C^0(\Ombar \times [0,T]),
\\
  &u_{(d_1, d_2)} \wsc u
      \quad\mbox{in}\ 
  L^{\infty}((0,T); W^{1,q}(\Omega)),  
\\
  &v_{(d_1, d_2)} \to v
      \quad\mbox{in}\ 
  C^0(\Ombar \times [0,T]),
\\
  &v_{(d_1, d_2)} \wsc v
      \quad\mbox{in}\ 
  L^{\infty}((0,T); W^{1,q}(\Omega))
      \quad\mbox{and}
\\
  &w_{(d_1, d_2)} \to w  
      \quad\mbox{in}\ 
  C^{2,0}(\Ombar \times [0,T])          
\end{align*}
as $(d_1,d_2)=(d_{1,j},d_{2,j})\to 0$.
\end{lem}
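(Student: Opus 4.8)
The plan is to extract a convergent subsequence from the family $(u_{(d_1,d_2)}, v_{(d_1,d_2)}, w_{(d_1,d_2)})$ by combining the uniform $L^\infty$ bound \eqref{assume:bound_uv} with the estimates gathered in Section~\ref{PPE}, and then to pass to the limit in the weak formulation \eqref{strong:u}--\eqref{strong:v}. First I would invoke Lemma~\ref{kari} (applicable because, for $d_1,d_2\in(d_{1,j})_j\times(d_{2,j})_j\subset(0,d_*)$ for a suitable fixed $d_*$, and because \eqref{assume:bound_uv} feeds the function $h$ in \eqref{def:h} a uniform bound) to conclude that $\io|\nabla u_{(d_1,d_2)}(\cdot,t)|^q + \io|\nabla v_{(d_1,d_2)}(\cdot,t)|^q$ is bounded uniformly in $t\in(0,T)$ and in $(d_1,d_2)$; together with Lemma~\ref{L1-2sp} this gives a uniform bound in $L^\infty((0,T);W^{1,q}(\Omega))$ for both $u_{(d_1,d_2)}$ and $v_{(d_1,d_2)}$. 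Next, Lemma~\ref{utvt} yields a uniform bound for $\partial_t u_{(d_1,d_2)}$ and $\partial_t v_{(d_1,d_2)}$ in $L^p((0,T);(W^{1,q/(q-1)}(\Omega))^*)$ for any fixed $p\in(1,\infty)$.

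With these two bounds in hand, an Aubin--Lions--Simon compactness argument (using the compact embedding $W^{1,q}(\Omega)\hookrightarrow\hookrightarrow C^0(\overline\Omega)$, valid since $q>n$, and the continuous embedding $C^0(\overline\Omega)\hookrightarrow (W^{1,q/(q-1)}(\Omega))^*$) produces a subsequence along which $u_{(d_{1,j},d_{2,j})}\to u$ and $v_{(d_{1,j},d_{2,j})}\to v$ strongly in $C^0(\overline\Omega\times[0,T])$, while the $L^\infty((0,T);W^{1,q})$ bound gives, after passing to a further subsequence, weak-$*$ convergence in $L^\infty((0,T);W^{1,q}(\Omega))$ to the same limits. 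The limits $u,v$ are nonnegative (as uniform limits of nonnegative functions) and, by lower semicontinuity, lie in $L^\infty_{\mathrm{loc}}([0,T);W^{1,q}(\Omega))$. For $w$: since $w_{(d_1,d_2)}$ solves the elliptic problem \eqref{pr:elli} with right-hand side $\alpha u_{(d_1,d_2)}+\beta v_{(d_1,d_2)}$, Lemma~\ref{max_reg:elli}(i) gives a uniform $W^{3,q}(\Omega)$ bound, hence (again with $q>n$) a uniform $C^{2,\theta}(\overline\Omega)$ bound for some $\theta>0$; combined with the $C^0$-convergence of $u_{(d_1,d_2)}+v_{(d_1,d_2)}$ and elliptic continuous dependence, we get $w_{(d_1,d_2)}\to w$ in $C^{2,0}(\overline\Omega\times[0,T])$, where $w$ classically solves the stationary equation with data $\alpha u+\beta v$; this settles Definition~\ref{df:W1q}(i)--(iii).

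For Definition~\ref{df:W1q}(iv), I would take the weak formulation of the first two equations of \eqref{2sp-LV} tested against $\varphi\in C_0^\infty(\overline\Omega\times[0,T))$, i.e.
\[
  -\int_0^T\!\!\int_\Omega u_{(d_1,d_2)}\varphi_t - \int_\Omega u_0\varphi(\cdot,0)
  = -d_1\int_0^T\!\!\int_\Omega \nabla u_{(d_1,d_2)}\cdot\nabla\varphi
    + \chi_1\int_0^T\!\!\int_\Omega u_{(d_1,d_2)}\nabla w_{(d_1,d_2)}\cdot\nabla\varphi
    + \int_0^T\!\!\int_\Omega \mu_1 u_{(d_1,d_2)}(1-u_{(d_1,d_2)}-a_1 v_{(d_1,d_2)})\varphi,
\]
and pass to the limit term by term: the term $-d_1\int_0^T\io\nabla u_{(d_1,d_2)}\cdot\nabla\varphi$ vanishes because $d_1\searrow 0$ while $\nabla u_{(d_1,d_2)}$ stays bounded in $L^\infty((0,T);L^q(\Omega))$; the chemotaxis term converges because $u_{(d_1,d_2)}\to u$ uniformly and $\nabla w_{(d_1,d_2)}\to\nabla w$ uniformly; the reaction term converges by uniform convergence of $u_{(d_1,d_2)},v_{(d_1,d_2)}$; and the left-hand side converges by uniform convergence of $u_{(d_1,d_2)}$. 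This yields \eqref{strong:u}, and \eqref{strong:v} follows identically. Finally, uniqueness of the strong $W^{1,q}$-solution (Lemma~\ref{unique:HHE}) upgrades subsequential convergence to convergence of the full net $(d_1,d_2)=(d_{1,j},d_{2,j})\to 0$. The main obstacle I anticipate is purely bookkeeping: ensuring that the constant $d_*$ from Lemma~\ref{control_grad}/Lemma~\ref{kari} is fixed in advance so that the gradient bounds are genuinely uniform across the whole sequence $(d_{1,j},d_{2,j})$, and carefully matching the compactness topologies (the interplay of strong $C^0$, weak-$*$ $L^\infty_tW^{1,q}_x$, and the time-derivative space) so that every term in the weak formulation passes to the limit cleanly.
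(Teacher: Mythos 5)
Your proposal is correct and follows the paper's own proof essentially step for step: uniform $W^{1,q}$-bounds from Lemma~\ref{kari}, time-derivative bounds from Lemma~\ref{utvt}, an Aubin--Lions--Simon compactness argument (the paper instead invokes the pre-packaged \cite[Lemma 4.4]{W-2014}, which directly yields relative compactness in $C^0([0,T];C^\sigma(\Ombar))$), convergence of $w$ via elliptic estimates, passage to the limit in the weak formulation, and Lemma~\ref{unique:HHE} to upgrade subsequential convergence of the full sequence. The only minor deviation is in securing the $C^{2,0}$-convergence of $w$: the paper applies the Schauder estimate from Lemma~\ref{max_reg:elli}(ii) to the differences $w_{d_{j_i}}-w_{d_{j_k}}$ and exploits the $C^\sigma$-convergence of $u,v$, whereas your route via a uniform $W^{3,q}$-bound together with the $W^{2,q}$-estimate requires an additional Ehrling/interpolation step to pass from $C^0([0,T];W^{2,q}(\Omega))$- to $C^0([0,T];C^2(\Ombar))$-convergence; both are sound.
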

%%%%%%%%%%%%%%%%%%%%%%%%%%%%%%%%%%%%%%%%%%%%%%%%%%%%%%%%%%%%%%%%%%
%------------------------------proof------------------------------%
\begin{proof}
Throughout the proof, we let $d_j := (d_{1,j}, d_{2,j})$
for each $j \in \N$.
As a consequence of \eqref{assume:bound_uv},
we infer from Lemma~\ref{kari} that
for all sufficiently large $j \in \N$ we have
\begin{align}\label{sub:bound1}
\begin{cases}
  (u_{d_j})_j
    \quad\mbox{is bounded in}\ 
  L^{\infty}((0,T); W^{1,q}(\Omega)),
\\
   (v_{d_j})_j
    \quad\mbox{is bounded in}\ 
  L^{\infty}((0,T); W^{1,q}(\Omega)).
\end{cases}   
\end{align}
Furthermore, by Lemma~\ref{utvt} we see that
\begin{align}\label{sub:bound2}
\begin{cases}
  (u_{d_j t})_j
    \quad\mbox{is bounded in}\ 
  L^{p}((0,T); (W^{1, \frac{q}{q-1}}(\Omega))^{*}),
\\
   (v_{d_j t})_j
    \quad\mbox{is bounded in}\ 
  L^{p}((0,T); (W^{1, \frac{q}{q-1}}(\Omega))^{*}).
\end{cases}   
\end{align}
for some $p \in (1, \infty)$.
Hence, \cite[Lemma 4.4]{W-2014} implies that
\begin{align}\label{sub:compact}
\begin{cases}
  (u_{d_j})_j
    \quad\mbox{is relatively compact in}\ 
  C^0([0,T]; C^{\sigma}(\Ombar)),
\\
   (v_{d_j})_j
    \quad\mbox{is relatively compact in}\ 
  C^0([0,T]; C^{\sigma}(\Ombar))
\end{cases}
\end{align}
for some $\sigma \in (0,1)$.
Due to \eqref{sub:bound1}, \eqref{sub:bound2}
and \eqref{sub:compact},
for any subsequence of $(d_j)_j$
one can pick a further subsequence $({d_j}_i)_i$
thereof such that
\begin{align}
  &u_{{d_j}_i} \to u
    \quad\mbox{in}\ 
  C^0([0,T]; C^{\sigma}(\Ombar)), \label{sub:converge1}
\\
  &u_{{d_j}_i} \wsc u
    \quad\mbox{in}\ 
  L^{\infty}((0,T); W^{1,q}(\Omega)), \label{test:converge1}
\\
  &v_{{d_j}_i} \to v
    \quad\mbox{in}\ 
  C^0([0,T]; C^{\sigma}(\Ombar)), \label{sub:converge2}
\\
  &v_{{d_j}_i} \wsc v
    \quad\mbox{in}\ 
  L^{\infty}((0,T); W^{1,q}(\Omega)), \label{test:converge2}
\end{align}
as $i \to \infty$ for some $u$ and $v$.
In addition, from Lemma~\ref{max_reg:elli}
we can find $C>0$ such that
\begin{align*}
  \|w_{{d_j}_i} - w_{{d_j}_k}\|_{C^{2,0}(\Ombar \times [0,T])}
  &\le 
   \|w_{{d_j}_i} - w_{{d_j}_k}\|_{C^{2+\sigma,0}(\Ombar \times [0,T])}
\\
  &\le C(
   \|u_{{d_j}_i} - u_{{d_j}_k}\|_{C^{\sigma,0}(\Ombar \times [0,T])}
     +
   \|v_{{d_j}_i} - v_{{d_j}_k}\|_{C^{\sigma,0}(\Ombar \times [0,T])})
\end{align*}
for $i, k \in \N$.
Thus,
\eqref{sub:converge1} and \eqref{sub:converge2} ensures that
\begin{align}\label{test:converge3}
  w_{{d_j}_i} \to w
    \quad\mbox{in}\ 
  C^{2,0}(\Ombar \times [0,T])  
\end{align}
as $i \to \infty$ for some $w$.
We claim that
$(u,v,w)$ is a strong $W^{1,q}$-solution of \eqref{HHE-LV}.
To see this,
we test the first equation of \eqref{2sp-LV}
by an arbitrary
$\varphi \in C_0^{\infty}(\Ombar \times [0,T))$
to obtain
\begin{align*}
     & - \int_{0}^{T} \io u_{{d_j}_i} \varphi_t
    -\io u_0 \varphi(\cdot, 0)
%\\ \quad\,
   = - d_{1, j_{i}} \int_{0}^{T} \io ∇u_{{d_j}_i} \cdot ∇\varphi
     + \chi_1 \int_{0}^{T} \io u_{{d_j}_i} ∇w_{{d_j}_i}\cdot∇\varphi
\\
   &\qquad\, + \mu_1 \int_{0}^{T} \io u_{{d_j}_i} \varphi
    - \mu_1 \int_{0}^{T} \io u_{{d_j}_i}^2 \varphi
    - a_1 \mu_1 \int_{0}^{T} \io u_{{d_j}_i} v_{{d_j}_i} \varphi.
\end{align*}
By \eqref{sub:converge1}, \eqref{test:converge1}
and \eqref{test:converge3},
we can take $i \to \infty$ in above equation,
and it readily follows that $u$ satisfies \eqref{strong:u}.
In a similar way, \eqref{sub:converge2}, \eqref{test:converge2}
and \eqref{test:converge3} imply that
\eqref{strong:v} holds true for $v$, which proves the claim.
Now, due to the uniqueness statement in Lemma~\ref{unique:HHE},
we know that every sequence $(u_{d_j}, v_{d_j}, w_{d_j})$
converges to the solution $(u,v,w)$ of \eqref{HHE-LV}.
\end{proof}
%-----------------------------------------------------------------%

%\newpage
Thanks to Lemma~\ref{2sp-enough},
uniform boundedness \eqref{assume:bound_uv} is 
always achieved on small time scales,
so we can prove the following local existence of
strong $W^{1,q}$-solutions,
corresponding to \cite[Lemma 4.5]{L-2015}
and \cite[Lemma 3.3]{X-2020}.
Similarly as in Lemma~\ref{bdd:app},
we can obtain the solutions to \eqref{HHE-LV}
without approximation on initial data.

%
%%%%%%%%%%%%%%%%%%%%%%%%%%%%%%%%%%%%%%%%%%%%%%%%%%%%%%%%%%%%%%%%%%
%                                                 Lemma                                             %
%%%%%%%%%%%%%%%%%%%%%%%%%%%%%%%%%%%%%%%%%%%%%%%%%%%%%%%%%%%%%%%%%%
%
\begin{lem}\label{exist:app}
(Cf. {\rm \cite[Lemma 4.5]{L-2015}})
Let $q > \max\{n, 2\}$.
Then for any $D > 0$, there exists $T(D) > 0$ such that
for any nonnegative $u_0, v_0 \in W^{1,q}(\Omega)$ satisfy
$\|u_0\|_{W^{1,q}(\Omega)}, \|v_0\|_{W^{1,q}(\Omega)} \le D$
there is a unique strong $W^{1,q}$-solution $(u,v,w)$
of \eqref{HHE-LV} in $\Omega \times (0, T(D))$.
Moreover,
$(u,v,w)$ can be approximated by solutions
$(u_{(d_1, d_2)}, v_{(d_1, d_2)}, w_{(d_1, d_2)})$
of \eqref{2sp-LV} in the sense that
\begin{align}
  &u_{(d_1, d_2)} \to u
    \quad\mbox{in}\ 
  C^0(\Ombar \times [0,T(D)]),
  \label{exist:app:1}
\\
  &u_{(d_1, d_2)} \wsc u
      \quad\mbox{in}\ 
  L^{\infty}((0,T(D)); W^{1,q}(\Omega)),  
  \label{exist:app:2}
\\
  &v_{(d_1, d_2)} \to v
      \quad\mbox{in}\ 
  C^0(\Ombar \times [0,T(D)]),
  \label{exist:app:3}
\\
  &v_{(d_1, d_2)} \wsc v
      \quad\mbox{in}\ 
  L^{\infty}((0,T(D)); W^{1,q}(\Omega))
      \quad\mbox{and}
      \label{exist:app:4}
\\
  &w_{(d_1, d_2)} \to w  
      \quad\mbox{in}\ 
  C^{2,0}(\Ombar \times [0,T(D)])     
  \label{exist:app:5}     
\end{align}
as $(d_1,d_2)\to 0$.
Furthermore, this solution satisfies
\begin{align}
\notag
    &e + \io |∇u(\cdot, t)|^q + \io |∇v(\cdot, t)|^q
\\
    &\quad\, \le \exp\left[
      \log\left( e + \io |∇u_0|^q + \io |∇v_0|^q\right)
      \exp\left(\int_{0}^{t} h(s) \, ds\right)\right]
    \label{W1q:sol:esti}
\end{align}
for a.e. $t \in (0, T(D))$,
where the function $h$ is defined as in \eqref{def:h}.
\end{lem}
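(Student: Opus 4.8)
The plan is to prove Lemma~\ref{exist:app} by fixing the existence time $T(D)$ and the bound $M(D)$ from Lemma~\ref{2sp-enough}, so that for all $d_1,d_2\in(0,d_*)$ the classical solution $(u_{(d_1,d_2)},v_{(d_1,d_2)},w_{(d_1,d_2)})$ of \eqref{2sp-LV} exists on $\Omega\times(0,T(D))$ and satisfies the uniform $L^\infty$-bound $\|u_{(d_1,d_2)}\|_{L^\infty(\Omega\times(0,T(D)))}+\|v_{(d_1,d_2)}\|_{L^\infty(\Omega\times(0,T(D)))}\le M(D)$. This is precisely the hypothesis \eqref{assume:bound_uv} needed to invoke Lemma~\ref{bdd:app}: given any null sequences $d_{1,j}\searrow 0$, $d_{2,j}\searrow 0$ (with $d_{1,j},d_{2,j}<d_*$ for large $j$), Lemma~\ref{bdd:app} produces a strong $W^{1,q}$-solution $(u,v,w)$ of \eqref{HHE-LV} on $\Omega\times(0,T(D))$ together with all the stated convergences along these sequences. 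Uniqueness of this solution is exactly Lemma~\ref{unique:HHE}.

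Next I would upgrade the sequential convergence to convergence as $(d_1,d_2)\to 0$ (not merely along subsequences). Since the limit $(u,v,w)$ is unique by Lemma~\ref{unique:HHE} and independent of the chosen null sequence, a standard subsequence argument applies: if some mode of convergence in \eqref{exist:app:1}--\eqref{exist:app:5} failed along a sequence $(d_{1,k},d_{2,k})\to 0$, one could extract a further subsequence to which Lemma~\ref{bdd:app} applies, obtaining convergence to the unique solution $(u,v,w)$ along that subsequence — a contradiction. Hence all five convergences hold in the full limit $(d_1,d_2)\to 0$.

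Finally, to establish the a.e.\ estimate \eqref{W1q:sol:esti}, I would pass to the limit in the inequality \eqref{int_ineq} of Lemma~\ref{kari}, which holds for each classical solution $(u_{(d_1,d_2)},v_{(d_1,d_2)})$ with $C$ from Lemma~\ref{control_grad} and with $h$ replaced by the approximating analogue $h_{(d_1,d_2)}(t)=2C(1+4\max\{\|u_{(d_1,d_2)}(t)\|_{\Lom\infty},\|v_{(d_1,d_2)}(t)\|_{\Lom\infty},1\}^{q+1})$. On the left-hand side, the weak-$*$ convergence $u_{(d_1,d_2)}\wsc u$, $v_{(d_1,d_2)}\wsc v$ in $L^\infty((0,T(D));W^{1,q}(\Omega))$ combined with weak lower semicontinuity of the norm $\psi\mapsto \io|\nabla\psi|^q$ gives, for a.e.\ $t$, that $\io|\nabla u(\cdot,t)|^q+\io|\nabla v(\cdot,t)|^q$ is bounded above by the liminf of the approximating quantities. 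On the right-hand side, the uniform convergence $u_{(d_1,d_2)}\to u$, $v_{(d_1,d_2)}\to v$ in $C^0(\Ombar\times[0,T(D)])$ forces $\|u_{(d_1,d_2)}(t)\|_{\Lom\infty}\to\|u(t)\|_{\Lom\infty}$ uniformly in $t$, so $h_{(d_1,d_2)}\to h$ pointwise with a uniform bound on $[0,T(D)]$, and dominated convergence yields $\int_0^t h_{(d_1,d_2)}(s)\,ds\to\int_0^t h(s)\,ds$; since $z\mapsto\exp(\log(e+\io|\nabla u_0|^q+\io|\nabla v_0|^q)\exp(z))$ is continuous, the right-hand side converges to the desired expression. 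Combining these two passages proves \eqref{W1q:sol:esti}. The main obstacle is the careful bookkeeping in this limit — in particular justifying the almost-everywhere (rather than everywhere) character of the bound, since the $W^{1,q}$-regularity of the limit is only of class $L^\infty_{\mathrm{loc}}$ in time and the weak-$*$ limit identification of $\nabla u(\cdot,t)$ is only valid for a.e.\ $t$ — but no genuinely new estimate is required beyond Lemmas~\ref{control_grad}--\ref{2sp-enough} and Lemma~\ref{bdd:app}.
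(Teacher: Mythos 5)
Your proposal is correct and follows the same route as the paper's proof: use Lemma~\ref{2sp-enough} to get the $d_1,d_2$-independent existence time and $L^\infty$-bound, invoke Lemma~\ref{bdd:app} for the approximation, cite Lemma~\ref{unique:HHE} for uniqueness, and pass to the limit in Lemma~\ref{kari} using weak lower semicontinuity. You simply spell out in more detail the two points the paper leaves to the reader — the standard ``every sequence has a further subsequence converging to the unique limit'' argument to upgrade from sequential to full convergence, and the dominated-convergence step for $\int_0^t h_{(d_1,d_2)}$ plus the a.e.\ character of the lower-semicontinuity passage — which is sound.
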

%%%%%%%%%%%%%%%%%%%%%%%%%%%%%%%%%%%%%%%%%%%%%%%%%%%%%%%%%%%%%%%%%%
%------------------------------proof------------------------------%
\begin{proof}
We apply Lemma~\ref{2sp-enough} to find $T(D) > 0$ such that
solutions $u_{(d_1, d_2)}$ and $v_{(d_1, d_2)}$ to \eqref{2sp-LV}
with initial data $u_0$ and $v_0$ exist on $(0, T(D))$
and are bounded on that interval.
Then Lemma~\ref{bdd:app} ensures
the existence of a strong $W^{1,q}$-solution of \eqref{HHE-LV}
with the claimed approximation properties
\eqref{exist:app:1}--\eqref{exist:app:5},
and also Lemma~\ref{unique:HHE} tells us uniqueness of the solution.
The inequality \eqref{W1q:sol:esti} results from
Lemma~\ref{kari}, \eqref{exist:app:1}, \eqref{exist:app:2},
\eqref{exist:app:3} and \eqref{exist:app:4}
by using the weak lower semicontinuity of the norm.
\end{proof}
%-----------------------------------------------------------------%

%\newpage
%
%%==============================================================%%
%%                                        Subsection 4.3                                      %%
%%                        Existence on maximal time intervals                         %%
%%==============================================================%%
\subsection{Existence on maximal time intervals}

The integral inequality \eqref{W1q:sol:esti} leads us to obtain
the following result on existence and extensibility of
strong $W^{1,q}$-solutions of \eqref{HHE-LV}.
The idea of the proof is based on \cite[Lemma 4.6]{L-2015}
and \cite[Theorem 1.1]{X-2020},
however, the inequality \eqref{W1q:sol:esti} forms different from those,
and we will give the full proof for the sake of completeness.

%
%%%%%%%%%%%%%%%%%%%%%%%%%%%%%%%%%%%%%%%%%%%%%%%%%%%%%%%%%%%%%%%%%%
%                                                 Lemma                                             %
%%%%%%%%%%%%%%%%%%%%%%%%%%%%%%%%%%%%%%%%%%%%%%%%%%%%%%%%%%%%%%%%%%
%
\begin{lem}\label{HHE-local}
(Cf. {\rm \cite[Lemma 4.6]{L-2015}})
Suppose that for some $q > \max\{n, 2\}$,
$u_0, v_0 \in W^{1,q}(\Omega)$ are nonnegative.
Then there exist $\Tmax \in (0, \infty]$ and
a uniquely determined triple $(u,v,w)$ of functions
\[
\begin{cases}
  u, v \in C^0(\Ombar \times [0, \Tmax)) \cap
     L^{\infty}_{{\rm loc}}([0, \Tmax); W^{1,q}(\Omega))
       \quad\mbox{and}
\\
   w \in C^{2,0}(\Ombar \times [0, \Tmax))
\end{cases}
\]   
which form a strong $W^{1,q}$-solution of \eqref{HHE-LV}
in $\Omega \times (0, \Tmax)$, and which are such that
\begin{align}\label{HHE-criterion}
  \mbox{either}\ \Tmax = \infty
  \quad\mbox{or}\quad
  \limsup_{t \nearrow \Tmax}
    (\|u(\cdot, t)\|_{L^{\infty}(\Omega)}
      + \|v(\cdot, t)\|_{L^{\infty}(\Omega)}) = \infty.
\end{align}
\end{lem}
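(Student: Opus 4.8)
The plan is to obtain $(u,v,w)$ on a maximal time interval by iterating the short-time existence result from Lemma~\ref{exist:app} and patching the pieces together, then to derive the extensibility criterion \eqref{HHE-criterion} from the a~priori bound \eqref{W1q:sol:esti}. Concretely: given nonnegative $u_0,v_0\in W^{1,q}(\Omega)$, set $D_0:=\max\{\|u_0\|_{W^{1,q}(\Omega)},\|v_0\|_{W^{1,q}(\Omega)},1\}$ and apply Lemma~\ref{exist:app} to get a unique strong $W^{1,q}$-solution on $(0,T(D_0))$. Since by Lemma~\ref{exist:app} the solution lies in $L^\infty_{\rm loc}([0,T(D_0));W^{1,q}(\Omega))$ and is continuous in $\Ombar\times[0,T(D_0))$, for any $t_0<T(D_0)$ we may use $(u(\cdot,t_0),v(\cdot,t_0))$ as new initial data and restart. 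I would make this precise by defining $\Tmax$ to be the supremum of all $T>0$ for which a strong $W^{1,q}$-solution exists on $\Omega\times(0,T)$; uniqueness (Lemma~\ref{unique:HHE}) guarantees that solutions on overlapping intervals coincide, so they glue to a single solution on $\Omega\times(0,\Tmax)$ with the stated regularity.

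Next I would verify the extensibility alternative \eqref{HHE-criterion}. Suppose $\Tmax<\infty$ but $\limsup_{t\nearrow\Tmax}(\|u(\cdot,t)\|_{L^\infty(\Omega)}+\|v(\cdot,t)\|_{L^\infty(\Omega)})=:L<\infty$. Then $f$ and hence $h$ from \eqref{def:h} are bounded on $(0,\Tmax)$, say $h\le H$, so \eqref{W1q:sol:esti} gives a uniform bound
\[
 e+\io|∇u(\cdot,t)|^q+\io|∇v(\cdot,t)|^q\le \exp\left[\log\left(e+\io|∇u_0|^q+\io|∇v_0|^q\right)e^{H\Tmax}\right]=:K_1
\]
for a.e.\ $t\in(0,\Tmax)$. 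Combined with the $L^1$-bounds (Lemma~\ref{L1-2sp}, passed to the limit, or directly from boundedness of $u,v$) and the embedding estimate $\|\psi\|_{L^\infty}\le C_1\|∇\psi\|_{L^q}+C_1\|\psi\|_{L^1}$, this yields a uniform bound $D_1$ for $\|u(\cdot,t)\|_{W^{1,q}(\Omega)}$ and $\|v(\cdot,t)\|_{W^{1,q}(\Omega)}$ on $(0,\Tmax)$. Applying Lemma~\ref{exist:app} with $D=D_1$ at any starting time $t_0\in(\Tmax-T(D_1),\Tmax)$ produces a solution extending past $\Tmax$, contradicting maximality; hence $\limsup_{t\nearrow\Tmax}(\|u(\cdot,t)\|_{L^\infty(\Omega)}+\|v(\cdot,t)\|_{L^\infty(\Omega)})=\infty$.

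One technical point I would address carefully is that restarting the solution requires the new initial data $(u(\cdot,t_0),v(\cdot,t_0))$ to be genuinely in $W^{1,q}(\Omega)$ with a controlled norm, not merely in $L^\infty_{\rm loc}([0,\Tmax);W^{1,q}(\Omega))$; here I would invoke that the bound \eqref{W1q:sol:esti} holds for a.e.\ $t$ and that the set of admissible restart times is of full measure, so one can always pick $t_0$ with $\|u(\cdot,t_0)\|_{W^{1,q}},\|v(\cdot,t_0)\|_{W^{1,q}}\le D_1$, and then continuity of $u,v$ in $\Ombar\times[0,\Tmax)$ ensures the glued function retains the claimed regularity across $t_0$. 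I expect the main obstacle to be precisely this gluing/restart bookkeeping — ensuring the $W^{1,q}$-regularity is not lost at the splicing times and that uniqueness from Lemma~\ref{unique:HHE} (which is stated for solutions with given initial data) genuinely forces the pieces to agree — rather than any new estimate, since the essential a~priori control is already packaged in Lemmas~\ref{kari} and~\ref{exist:app}.
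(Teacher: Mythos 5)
Your overall strategy — iterate Lemma~\ref{exist:app}, glue by uniqueness, and derive \eqref{HHE-criterion} from the a~priori estimate \eqref{W1q:sol:esti} — is the same as the paper's, but there is a gap at the point where you invoke \eqref{W1q:sol:esti} on the whole interval $(0,\Tmax)$. Lemma~\ref{exist:app} establishes \eqref{W1q:sol:esti} only on the short interval $(0,T(D))$ with $D$ the $W^{1,q}$-size of the initial data; once you glue together several such pieces to reach your $\Tmax$ (defined as the plain supremum of existence times), nothing you have written guarantees that the \emph{global} solution on $(0,\Tmax)$ still obeys the inequality \eqref{W1q:sol:esti} with the \emph{original} data $(u_0,v_0)$ on the right-hand side. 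Without this, the step ``so \eqref{W1q:sol:esti} gives a uniform bound $K_1$'' is unjustified, and the rest of the contradiction argument does not close.

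The paper avoids this circularity in two ways that you would need to replicate. First, it defines $\Tmax$ not as the supremum of existence times but as $\sup\mathcal{S}$, where $\mathcal{S}$ is the set of $\widetilde T$ for which a strong $W^{1,q}$-solution \emph{satisfying} \eqref{W1q:sol:esti} for a.e.\ $t\in(0,\widetilde T)$ exists; this builds the needed a~priori control into the maximal time by fiat. Second, and essentially, it verifies explicitly in the chain of inequalities culminating in \eqref{W1q:sol:esti4} that the estimate \emph{propagates} under gluing: after restarting at some $t_0$ with data $(u(\cdot,t_0),v(\cdot,t_0))$, the restarted estimate (with $t_0$ as base time) composes multiplicatively with the original one, using the structure $\exp[\log(\cdots)\exp(\int_0^{t_0}h)\cdot\exp(\int_{t_0}^{t}\widetilde h)]=\exp[\log(\cdots)\exp(\int_0^{t}\widetilde h)]$. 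It is precisely this computation that shows the extended solution belongs to $\mathcal{S}$ (or, in your formulation, that \eqref{W1q:sol:esti} indeed holds on the full interval), so that $t_0+T(D_1)>\Tmax$ produces the desired contradiction. You flagged the right concern in your last paragraph (restart bookkeeping), but the delicate point is not the measurability of restart times — it is the preservation of the logarithmic/exponential estimate under concatenation, which you must prove rather than assume.
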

%%%%%%%%%%%%%%%%%%%%%%%%%%%%%%%%%%%%%%%%%%%%%%%%%%%%%%%%%%%%%%%%%%
%------------------------------proof------------------------------%
\begin{proof}
When applied to
$D := \|u_0\|_{W^{1,q}(\Omega)} + \|v_0\|_{W^{1,q}(\Omega)}$,
Lemma~\ref{exist:app} provides $T > 0$
and a strong $W^{1,q}$-solution $(u,v,w)$ of \eqref{HHE-LV}
in $\Omega \times (0,T)$ fulfilling
\begin{align}
    \notag
    &e + \io |∇u(\cdot, t)|^q + \io |∇v(\cdot, t)|^q
\\
    &\quad\, \le \exp\left[
      \log\left( e + \io |∇u_0|^q + \io |∇v_0|^q\right)
      \exp\left(\int_{0}^{t} h(s) \, ds\right)\right]
      \label{W1q:sol:esti2}
\end{align}
for almost all $t \in (0, T)$,
where the function $h$ is given by \eqref{def:h}, that is,
\[
    h(t) = 2C(1 + 4 \max\set{
      \norm[\Lom\infty]{u(t)}, \norm[\Lom\infty]{v(t)}, 1}^{q+1})
        \quad\mbox{for all}\ t \in (0,T)
\]
with $C$ from Lemma~\ref{control_grad}.
Consequently, the set
\[
\begin{split}
    \mathcal{S}:= \big\{ \widetilde{T} > 0 \mid \,
        &\exists \  \mbox{strong}\ W^{1,q}\mbox{-solution of}\ 
          \eqref{HHE-LV} \ \mbox{in}\ \Omega \times (0, \widetilde{T})
\\
      &\mbox{with initial data}\  u_0 \ \mbox{and}\  v_0\ 
        \mbox{that satisfies}\ \eqref{W1q:sol:esti2}\ 
        \mbox{for a.e.}\ t \in (0, \widetilde{T}) \big\}
\end{split}
\]
is nonempty, and thus $\Tmax := \sup \mathcal{S} \le \infty$
is well-defined.
According to Lemma~\ref{unique:HHE},
the strong $W^{1,q}$-solution on $\Omega \times (0, \Tmax)$
clearly exists and is unique,
so we only have to verify \eqref{HHE-criterion}.

Suppose on the contrary that
\[
    \Tmax < \infty
    \quad\mbox{and}\quad
    \limsup_{t \nearrow \Tmax}
    (\|u(\cdot, t)\|_{L^{\infty}(\Omega)}
      + \|v(\cdot, t)\|_{L^{\infty}(\Omega)}) < \infty.
\]
Then there exists $M > 0$ such that
\begin{align}\label{cont:bound}
    u(x,t) + v(x,t) \le M
    \quad\mbox{for all}\ (x, t) \in \Omega \times (0, \Tmax).
\end{align}
By the definition of $\mathcal{S}$,
we can find a null set $N \subset (0, \Tmax)$ such that
\eqref{W1q:sol:esti} holds for all $t \in (0, \Tmax) \setminus N$,
and invoking \eqref{cont:bound} we infer that
there is some $D_1 > 0$ such that
\[
    \|u(\cdot, t_0)\|_{W^{1,q}(\Omega)}
    + \|v(\cdot, t_0)\|_{W^{1,q}(\Omega)}
    \le D_1
\]
for all $t_0 \in (0, \Tmax)\setminus N$.
Now we apply Lemma~\ref{exist:app} again to see that
there exists $T(D_1) > 0$ such that
for each $t_0 \in (0, \Tmax) \setminus N$, the problem
\[
  \begin{cases}
    {\widehat{u}}_t 
        = - \chi_1 ∇ \cdot (\widehat{u} ∇ \widehat{w})
             + \mu_1 \widehat{u} 
               (1 - \widehat{u} - a_1 \widehat{v}),
    & x\in\Omega,\ t>0, 
  \\
    {\widehat{v}}_t 
        = - \chi_2 ∇ \cdot (\widehat{v} ∇ \widehat{w})
             + \mu_2 \widehat{v}
               (1 - a_2 \widehat{u} - \widehat{v}),
    & x\in\Omega,\ t>0,
  \\
    0 = d_3 \Delta \widehat{w} - \gamma \widehat{w}
        + \alpha \widehat{u} + \beta \widehat{v},
    & x\in\Omega,\ t>0,
  \\
    ∇ \widehat{u} \cdot \nu = ∇ \widehat{v} \cdot \nu 
      = ∇ \widehat{w} \cdot \nu = 0,
    & x\in \pa\Omega,\ t>0,
  \\
    \widehat{u}(x,0)=u(x, t_0),\ \widehat{v}(x,0)=v(x, t_0),
    & x\in\Omega,
  \end{cases}
\]
admits a strong $W^{1,q}$-solution
$(\widehat{u}, \widehat{v}, \widehat{w})$
in $\Omega \times (0, T(D_1))$, satisfying
\begin{align}
\notag
    &e + \io |∇\widehat{u}(\cdot, t)|^q
      + \io |∇\widehat{v}(\cdot, t)|^q
\\
    &\quad\, \le \exp\left[
      \log\left( e + \io |∇u(\cdot, t_0)|^q 
         + \io |∇v(\cdot, t_0)|^q\right)
      \exp\left(\int_{0}^{t} \widehat{h}(s) \, ds\right)\right]
    \label{W1q:sol:esti3}
\end{align}
for a.e. $t \in (0, T(D_1))$, where
\begin{align*}
    \widehat{h}(t) := 2C(1 + 4 \max\set{
      \norm[\Lom\infty]{\widehat{u}(t)},
      \norm[\Lom\infty]{\widehat{v}(t)}, 1}^{q+1}),
        \quad t \in (0,T(D_1)).
\end{align*}
Thus, choosing any $t_0 \in (0, \Tmax)\setminus N$
such that $t_0 > \Tmax - \frac{T(D_1)}{2}$,
\[
    (\widetilde{u}, \widetilde{v}, \widetilde{w})(\cdot, t) =
    \begin{cases}
    (u, v, w)(\cdot, t),
    & t \in (0, t_0),
  \\
    (\widehat{u}, \widehat{v}, \widehat{w})
    (\cdot, t - t_0),
    & t \in [t_0, t_0 + T(D_1))
  \end{cases}
\]
would define a strong $W^{1,q}$-solution of \eqref{HHE-LV}
in $\Omega \times (0, t_0 + T(D_1))$,
which obviously would satisfy \eqref{W1q:sol:esti2} for a.e. $t < t_0$.
On the other hand, for a.e. $t \in( t_0, t_0+T(D_1))$,
combining \eqref{W1q:sol:esti3} with \eqref{W1q:sol:esti2}
we would obtain
\begin{align}
\notag
    &e + \io |∇\widetilde{u}(\cdot, t)|^q 
      + \io |∇\widetilde{v}(\cdot, t)|^q
\\ \notag
    &\quad\, \le \exp\left[
      \log\left( e + \io |∇u(\cdot, t_0)|^q 
         + \io |∇v(\cdot, t_0)|^q\right)
      \exp\left(\int_{0}^{t - t_0} \widehat{h}(s) \, ds\right)\right]
\\ \notag
    &\quad\, = \exp\left[
      \log\left( e + \io |∇u(\cdot, t_0)|^q 
         + \io |∇v(\cdot, t_0)|^q\right)
      \exp\left(\int_{t_0}^{t} \widetilde{h}(s) \, ds\right)\right]
\\ \notag
    &\quad\, \le \exp\left[
      \log\left( e + \io |∇u_0|^q + \io |∇v_0|^q\right)
      \exp\left(\int_{0}^{t_0} h(s) \, ds\right)
      \exp\left(\int_{t_0}^{t} \widetilde{h}(s) \, ds\right)
      \right]
\\
    &\quad\, = \exp\left[
      \log\left( e + \io |∇u_0|^q + \io |∇v_0|^q\right)
      \exp\left(\int_{0}^{t} \widetilde{h}(s) \, ds\right)
      \right],
    \label{W1q:sol:esti4}
\end{align}
where
\(    \widetilde{h}(t) := 2C(1 + 4 \max\set{
      \norm[\Lom\infty]{\widetilde{u}(t)},
      \norm[\Lom\infty]{\widetilde{v}(t)}, 1}^{q+1}),
        \quad t \in (0,T(D_1)).
\)
The estimate \eqref{W1q:sol:esti4} shows that
$(\widetilde{u}, \widetilde{v}, \widetilde{w})$
satisfies \eqref{W1q:sol:esti} for a.e.\ $t \in (0, t_0 + T(D_1))$.
Since $t_0 + T(D_1) > \Tmax + \frac{T(D_1)}{2} > \Tmax$,
this would finally lead to a contradiction
to the definition of $\Tmax$,
and hence we conclude that \eqref{HHE-criterion} holds.
\end{proof}

The previous lemmas already demonstrate local well-posedness of \eqref{HHE-LV}.

%------------------------------proof------------------------------%
\begin{prth1.2}
We only need to combine Lemmas~\ref{HHE-local} and \ref{unique:HHE}.
\qed
\end{prth1.2}

%\newpage
%%==============================================================%%
%%==============================================================%%
%%                                               Section 5                                              %%
%%                                        Proof of Theorem 1.1                                   %%
%%==============================================================%%
%%==============================================================%%
\section{Proof of \tops{Theorem~\ref{main_thm}}{the main theorem}}\label{conclude}

In this section we first show the blow-up result for \eqref{HHE-LV}.
The idea of deriving blow-up is based on \cite{W-2014},
and we will proceed as in \cite{L-2015} and \cite{X-2020}.
The purpose of this part is to improve the blow-up result
of \cite[Theorem 1.2]{X-2020},
and we take some different approach during the estimate
(Lemma~\ref{blowup:pre}).

After that, we use this blow-up result to prove Theorem~\ref{main_thm}.

%
%%==============================================================%%
%%                                        Subsection 5.1                                      %%
%%                               Blow-up of solutions to (1.2)                      %%
%%==============================================================%%
\subsection{Blow-up of solutions to \tops{\eqref{HHE-LV}}{the HHE system}}

We first introduce
$L^1$-norm boundedness of solution to \eqref{HHE-LV},
which helps to obtain the conditions for initial data.

%
%%%%%%%%%%%%%%%%%%%%%%%%%%%%%%%%%%%%%%%%%%%%%%%%%%%%%%%%%%%%%%%%%%
%                                                 Lemma                                             %
%%%%%%%%%%%%%%%%%%%%%%%%%%%%%%%%%%%%%%%%%%%%%%%%%%%%%%%%%%%%%%%%%%
%
\begin{lem}\label{L1-HHE}
Let $T > 0$, and assume that
$(u,v,w)$ is a strong $W^{1,q}$-solution of \eqref{HHE-LV}
in $\Omega \times (0,T)$
with nonnegative $u_0, v_0 \in W^{1,q}(\Omega)$.
Then
\[
  \io u(\cdot, t) + \io v(\cdot, t)
  \le \max\left\{ \io u_0 + \io v_0, \, 2|\Omega|\right\}
\]
for all $t \in (0,T)$.
\end{lem}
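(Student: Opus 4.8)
The plan is to integrate the $u$-equation and the $v$-equation of \eqref{HHE-LV} over $\Omega$, add them, and control the resulting kinetic terms from above by something that, after an ODE comparison, yields the claimed bound on $\io u + \io v$. Writing $m(t) := \io u(\cdot,t) + \io v(\cdot,t)$, the chemotactic divergence terms integrate to zero by the no-flux boundary conditions, so we are left with
\[
  \dt m(t)
  = \mu_1 \io u(1 - u - a_1 v)
    + \mu_2 \io v(1 - a_2 u - v)
  \le \mu_1 \io u - \mu_1 \io u^2 + \mu_2 \io v - \mu_2 \io v^2,
\]
where we discarded the nonpositive mixed terms $-a_1\mu_1 uv$ and $-a_2\mu_2 uv$ using $a_1,a_2\ge 0$ and nonnegativity of $u,v$ (guaranteed by the solution concept in Definition~\ref{df:W1q}).

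Next I would bound each species' kinetic part separately by a logistic-type ODE, exactly as in the proof of Lemma~\ref{L1-2sp}: by the Cauchy--Schwarz inequality, $\io u^2 \ge \frac{1}{|\Omega|}(\io u)^2$ and likewise for $v$, so with $U(t):=\io u$ and $V(t):=\io v$ we get $U' \le \mu_1 U - \frac{\mu_1}{|\Omega|}U^2$ and $V' \le \mu_2 V - \frac{\mu_2}{|\Omega|}V^2$. An ODE comparison then gives $U(t)\le\max\{|\Omega|,U(0)\}$ and $V(t)\le\max\{|\Omega|,V(0)\}$ for all $t\in(0,T)$, and adding these two bounds yields
\[
  m(t) \le \max\{|\Omega|,\,\io u_0\} + \max\{|\Omega|,\,\io v_0\}
       \le \max\left\{\io u_0 + \io v_0,\; 2|\Omega|\right\},
\]
where the last inequality follows because the left-hand sum is at most whichever of its two constituents dominates in each max, hence at most the larger of $\io u_0 + \io v_0$ and $2|\Omega|$. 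This is precisely the asserted estimate, so the proof concludes here.

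One technical point to be careful about: a strong $W^{1,q}$-solution need not a priori possess the $C^1$-in-time regularity needed to differentiate $m$ and test the PDE by $1$ directly. The clean way around this is to use the weak formulation \eqref{strong:u}--\eqref{strong:v} with test function $\varphi\equiv 1$ on a suitable time window, or equivalently to note that $u,v\in C^0(\Ombar\times[0,T))\cap L^\infty_{\mathrm{loc}}([0,T);W^{1,q}(\Omega))$ together with the (weak) evolution equations makes $t\mapsto U(t),V(t)$ absolutely continuous with the stated differential inequalities holding a.e.; the ODE comparison is then applied in its integral (Gr\"onwall-type) form. This regularity bookkeeping is the only mild obstacle; the kinetic estimate itself is routine and parallels Lemma~\ref{L1-2sp} verbatim.
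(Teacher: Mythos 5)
The paper does not give its own proof here---it cites \cite[Lemma 4.1]{X-2020} and the one-species analogues \cite[Lemma 4.7]{L-2015}, \cite[Lemma 4.1]{W-2014}---so there is no in-paper argument to compare to; your proposal supplies one from scratch. The PDE side is fine: testing \eqref{strong:u}--\eqref{strong:v} by (approximate) constants, discarding the nonpositive cross-competition terms, Cauchy--Schwarz, and the ODE comparison each give the correct componentwise bounds $\io u(\cdot,t)\le\max\{|\Omega|,\io u_0\}$ and $\io v(\cdot,t)\le\max\{|\Omega|,\io v_0\}$, and your remark about using the weak formulation to obtain absolute continuity of $t\mapsto\io u,\io v$ is the right technical device.

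The flaw is the final algebraic step. You assert
\[
\max\{|\Omega|,\textstyle\io u_0\}+\max\{|\Omega|,\textstyle\io v_0\}\le\max\left\{\textstyle\io u_0+\io v_0,\;2|\Omega|\right\},
\]
but this is false whenever exactly one of $\io u_0$, $\io v_0$ exceeds $|\Omega|$: take $|\Omega|=1$, $\io u_0=2$, $\io v_0=0$, giving $3$ on the left and $2$ on the right. In the mixed case the left side equals (large mass) $+\,|\Omega|$, which strictly exceeds both constituents of the right-hand maximum, so the justification ``at most whichever constituent dominates'' does not apply. Thus your argument only yields the strictly weaker estimate $\io u+\io v\le\max\{|\Omega|,\io u_0\}+\max\{|\Omega|,\io v_0\}$, not the asserted one. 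The deeper issue is structural: bounding $U(t):=\io u$ and $V(t):=\io v$ separately and adding genuinely loses information. Indeed, the two componentwise logistic bounds permit $U+V$ to overshoot $\max\{U(0)+V(0),2|\Omega|\}$ (e.g.\ $a_1=a_2=0$, $\mu_1\gg\mu_2$, with small $u_0$ growing rapidly toward carrying capacity while a larger $v_0$ decays slowly), so a correct derivation of the lemma's bound must track $m(t)=U(t)+V(t)$ itself rather than the components one at a time.
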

%%%%%%%%%%%%%%%%%%%%%%%%%%%%%%%%%%%%%%%%%%%%%%%%%%%%%%%%%%%%%%%%%%
%------------------------------proof------------------------------%
\begin{proof}
The proof can be found in \cite[Lemma 4.1]{X-2020}, see also \cite[Lemma 4.7]{L-2015} or \cite[Lemma 4.1]{W-2014}.
\end{proof}
%-----------------------------------------------------------------%

Now we borrow the following lemma from \cite{W-2014},
which is important to derive blow-up in \eqref{HHE-LV}.

%
%%%%%%%%%%%%%%%%%%%%%%%%%%%%%%%%%%%%%%%%%%%%%%%%%%%%%%%%%%%%%%%%%%
%                                                 Lemma                                             %
%%%%%%%%%%%%%%%%%%%%%%%%%%%%%%%%%%%%%%%%%%%%%%%%%%%%%%%%%%%%%%%%%%
%
\begin{lem}\label{finite_time}
Let $a > 0$, $b \ge 0$, $d > 0$ and $\kappa > 1$ be such that
\[
  a > \left(\dfrac{2b}{d}\right)^{\frac{1}{\kappa}}.
\]
If for some $T > 0$ the function $y \in C([0,T))$ is
nonnegative and satisfies
\[
  y(t) \ge a - bt + d \int_{0}^{t} y^{\kappa}(s) \, ds
\]
for all $t \in (0,T)$, then we necessarily have
\[
  T \le \dfrac{2}{(\kappa - 1) a^{\kappa - 1} d}.
\]
\end{lem}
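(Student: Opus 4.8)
The plan is to derive a superlinear ODE comparison for $y$ and then argue that a solution cannot exist past the claimed time. First I would reduce to the case $b=0$ by a shifting or monotonicity argument: since $y$ is nonnegative, the inequality $y(t)\ge a-bt+d\int_0^t y^\kappa$ forces $y$ to stay above the solution of the ODE $z'=dz^\kappa$, but the presence of the $-bt$ term means $y$ might dip initially before the integral term takes over. The condition $a>(2b/d)^{1/\kappa}$ is precisely what guarantees that the negative drift $-bt$ is dominated; concretely, I would show that $y(t)\ge \tfrac{a}{2}$ at least until the integral term has accumulated enough, by observing that on any interval where $y(t)\ge a/2$ one has $d\int_0^t y^\kappa\ge d(a/2)^\kappa t$, which already exceeds $bt$ once $d(a/2)^\kappa> b$, i.e. exactly under the hypothesis (up to the constant $2$). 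So a first sub-step is: $y(t)\ge a/2$ for all $t\in(0,T)$.

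Once we know $y\ge a/2$ everywhere on $(0,T)$, I would feed this back: for $t\in(0,T)$,
\[
  y(t)\ge a-bt+d\int_0^t y^\kappa(s)\,ds \ge \frac a2 + d\int_0^t y^\kappa(s)\,ds,
\]
using $a-bt\ge a/2$ on the relevant range again via the hypothesis. Now set $w(t):=\tfrac a2+d\int_0^t y^\kappa$. Then $w$ is $C^1$, $w(0)=a/2$, $w'(t)=d\,y^\kappa(t)\ge d\,w^\kappa(t)$ since $y\ge w$. This is a scalar autonomous differential inequality with superlinear right-hand side. Integrating $w'/w^\kappa\ge d$ gives
\[
  \frac{1}{(\kappa-1)}\Big(w(0)^{-(\kappa-1)}-w(t)^{-(\kappa-1)}\Big)\ge d\,t,
\]
hence $d\,t< \dfrac{w(0)^{-(\kappa-1)}}{\kappa-1}=\dfrac{(a/2)^{-(\kappa-1)}}{\kappa-1}$ for all $t\in(0,T)$, because the left side of the integrated inequality is strictly less than $w(0)^{-(\kappa-1)}/(\kappa-1)$. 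Taking $t\nearrow T$ yields $T\le \dfrac{2^{\kappa-1}}{(\kappa-1)a^{\kappa-1}d}$.

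This bound has a $2^{\kappa-1}$ where the statement claims a clean $2$; so the main obstacle — really the only delicate point — is getting the sharp constant. I would handle this by not throwing away half of $a$ prematurely: instead of the crude bound $y\ge a/2$, keep track of $y(t)\ge a - bt + d\int_0^t y^\kappa$ and compare directly with the solution $\bar y$ of the IVP $\bar y' = -b + d\bar y^\kappa$, $\bar y(0)=a$. A comparison principle (valid since the right-hand side is locally Lipschitz in $\bar y$ and the inequality is of the ``$\ge$'' integral type) gives $y\ge\bar y$ on the common interval of existence. Then one analyses $\bar y$: under $a^\kappa> 2b/d$, i.e. $d a^\kappa - b\ge \tfrac12 d a^\kappa$, the vector field $-b+d\bar y^\kappa$ at $\bar y=a$ is at least $\tfrac12 d a^\kappa>0$, so $\bar y$ is increasing and stays $\ge a$; consequently $\bar y'\ge d\bar y^\kappa - b\ge d\bar y^\kappa - \tfrac12 d a^\kappa\ge \tfrac12 d\bar y^\kappa$ (using $\bar y\ge a$). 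Integrating $\bar y'\ge \tfrac d2\bar y^\kappa$ from $0$ gives blow-up of $\bar y$ — and hence non-existence of a finite nonnegative $y$ satisfying the bound — no later than $t = \dfrac{2}{(\kappa-1)a^{\kappa-1}d}$, which is exactly the asserted estimate. The routine verification that the comparison argument applies to merely continuous $y$ (rather than $C^1$) via the Volterra integral formulation, and the elementary integration of the Bernoulli-type inequality, I would leave as standard.
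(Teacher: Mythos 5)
Your second, final argument — passing to the solution $\bar y$ of $\bar y' = -b + d\bar y^\kappa$, $\bar y(0)=a$, arguing $y\ge\bar y$, then showing $\bar y\ge a$, deducing $\bar y'\ge\tfrac d2\bar y^\kappa$, and integrating — is correct and produces the stated constant. The reference the paper cites (\cite[Lemma 4.9]{W-2014}) reaches the same bound slightly more directly, working with the $C^1$ auxiliary function $z(t):=a-bt+d\int_0^t y^\kappa(s)\,ds$: since $y\ge z$ on $(0,T)$ and $s\mapsto s^\kappa$ is increasing on $[0,\infty)$, one has $z'=-b+dy^\kappa\ge -b+dz^\kappa$ pointwise, and the same barrier argument you apply to $\bar y$ shows $z\ge a$ and then $z'\ge\tfrac d2 z^\kappa$. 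That version avoids invoking a separate Volterra comparison principle (which is valid for the monotone nonlinearity here, but to be fully self-contained would still want a Gronwall estimate on the negative part of $y-\bar y$). Both routes turn on exactly the same two observations: $da^\kappa>2b$ keeps the vector field $\ge\tfrac12 da^\kappa>0$, so the quantity being tracked never drops below $a$, and that in turn lets you absorb $-b$ into half of $d(\cdot)^\kappa$.

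One caution on your abandoned first attempt: $a^\kappa>2b/d$ does \emph{not} imply $d(a/2)^\kappa>b$ — take $\kappa=2$, $a=d=1$, $b=2/5$, so $da^\kappa=1>4/5=2b$ while $d(a/2)^\kappa=1/4<b$. Likewise ``$a-bt\ge a/2$'' fails once $t>a/(2b)$. So neither the sub-claim $y\ge a/2$ nor the reduction to $y(t)\ge\tfrac a2+d\int_0^t y^\kappa$ is justified as written; the issue is not merely a lost factor $2^{\kappa-1}$ but that the argument does not close. You rightly discard that route in favour of the comparison argument, which is sound.
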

%%%%%%%%%%%%%%%%%%%%%%%%%%%%%%%%%%%%%%%%%%%%%%%%%%%%%%%%%%%%%%%%%%
%------------------------------proof------------------------------%
\begin{proof}
\cite[Lemma 4.9]{W-2014}.
\end{proof}
%-----------------------------------------------------------------%

In order to use this lemma,
we will prepare the following integral inequality
accompanied by lower bounds for some norm of $u$ and $v$.
During the proof,
since the strong $W^{1,q}$-solution $(u,v,w)$ lacks the regularity
of $u_t$ and $v_t$,
we choose a suitable test function as in
\cite[Lemma 4.9]{L-2015}
and start from \eqref{strong:u} and \eqref{strong:v}.

%
%%%%%%%%%%%%%%%%%%%%%%%%%%%%%%%%%%%%%%%%%%%%%%%%%%%%%%%%%%%%%%%%%%
%                                                 Lemma                                             %
%%%%%%%%%%%%%%%%%%%%%%%%%%%%%%%%%%%%%%%%%%%%%%%%%%%%%%%%%%%%%%%%%%
%
\begin{lem}\label{blowup:pre}
(Cf. {\rm \cite[Lemma 4.9]{L-2015}})
For any $p > 1$ satisfying
\eqref{condition:p:1}, \eqref{condition:p:2},
\eqref{condition:p:3} or \eqref{condition:p:4},
there is $C > 0$ such that
for any $\eta > 0$
one can find $B(p, \eta) > 0$ with the following property:
Whenever $q > 1$ and $(u, v, w)$ is a strong $W^{1,q}$-solution
of \eqref{HHE-LV} with nonnegative $u_0, v_0 \in W^{1,q}(\Omega)$
in $\Omega \times (0,T)$ for some $T > 0$, we have
\begin{align*}
    \io u^p(\cdot, t) + \io v^p(\cdot, t)
%\\
    & \ge \io u_0^p + \io v_0^p
    +(C - \eta)
           \int_{0}^{t} \io u^{p+1}
           +(C - \eta)
           \int_{0}^{t} \io v^{p+1}
\\
    &\quad\, - B(p, \eta) \int_{0}^{t} \left(\io u \right)^{p+1}
        - B(p, \eta) \int_{0}^{t} \left(\io v \right)^{p+1}   
\end{align*}
for all $t \in (0,T)$.
\end{lem}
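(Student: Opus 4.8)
The plan is to test the weak formulations \eqref{strong:u} and \eqref{strong:v} with the (temporally regularized) choices $\varphi = u^{p-1}$ and $\varphi = v^{p-1}$, as in \cite[Lemma 4.9]{L-2015}, to derive
\[
  \tfrac1p\dt\io u^p = -\chi_1(p-1)\io u^{p-1}∇u\cdot∇w + \mu_1\io u^p - \mu_1\io u^{p+1} - \mu_1 a_1\io u^p v,
\]
and the analogous identity for $v$. Rewriting $(p-1)u^{p-1}∇u\cdot∇w = \f{p-1}{p}∇(u^p)\cdot∇w$ and integrating by parts, the chemotactic term becomes $\f{\chi_1(p-1)}{p}\io u^p\Delta w$; inserting the elliptic equation $\Delta w = -\f{1}{d_3}(\alpha u + \beta v - \gamma w)$ turns this into $-\f{\chi_1(p-1)}{p d_3}\io u^p(\alpha u + \beta v - \gamma w)$, producing a favourable $+\f{\chi_1\alpha(p-1)}{p d_3}\io u^{p+1}$ term, a cross term $+\f{\chi_1\beta(p-1)}{p d_3}\io u^p v$, and the sign-problematic term $+\f{\chi_1\gamma(p-1)}{p d_3}\io u^p w$. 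I would then handle the symmetric structure for $v$, add the two resulting identities, integrate in time, and discard the nonpositive competition contributions $-\mu_1 a_1\io u^p v$ etc.\ only where they are genuinely harmful (keeping in mind that cross terms like $\io u^p v$ appear with both signs and must be Young-split against $\io u^{p+1}$ and $\io v^{p+1}$).

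The decisive new ingredient — this is where I expect the main obstacle — is controlling $\io_0^t\io u^p w$ and $\io_0^t\io v^p w$ without carrying the $\gamma$-dependence that weakened \cite{X-2020}. The plan here, as foreshadowed in Remark~\ref{condition:remark}, is \emph{not} to bound $w$ pointwise by $u$ and $v$ via Lemma~\ref{Lp-elli} (which would reintroduce $\gamma$ in an unfavourable position), but instead to exploit the equation once more: test the elliptic PDE cleverly or use that $\io u^p w \le \|w\|_{L^{p+1}}\|u^p\|_{L^{\f{p+1}{p}}} = \|w\|_{L^{p+1}}\|u\|_{L^{p+1}}^p$ together with $\|w\|_{L^{p+1}} \le \f{\alpha}{\gamma}\|u\|_{L^{p+1}} + \f{\beta}{\gamma}\|v\|_{L^{p+1}}$ (Lemma~\ref{Lp-elli}), so that $\f{\chi_1\gamma(p-1)}{p d_3}\io u^p w$ is dominated by $\f{\chi_1(p-1)}{p d_3}(\alpha\io u^{p+1} + \beta\io u^{p}v + \ldots)$ with the $\gamma$'s cancelling. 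Combining with Young's inequality $\io u^p v \le \f{p}{p+1}\io u^{p+1} + \f{1}{p+1}\io v^{p+1}$ (and the mirrored versions), all $(p+1)$-power integrals can be collected. The coefficient of $\io_0^t\io u^{p+1}$ in the end will be a linear combination of $\f{\chi_1\alpha(p-1)}{p d_3}$, $\f{\chi_1\beta(p-1)}{p d_3}$, $\f{\chi_2 a_2 \alpha(p-1)}{p d_3}$-type terms minus the "bad" contributions $\f{\mu_1(p+1)}{1}$, $\mu_1 a_1$, $\mu_2 a_2$, etc.; positivity of this coefficient (uniformly, so that a fixed $C>0$ survives) is \emph{exactly} what conditions \eqref{condition:p:1}--\eqref{condition:p:4} encode. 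I would therefore organize the bookkeeping as a $2\times 2$ system: denoting the two coefficients-to-be-made-positive as linear functionals of $\chi_1,\chi_2$, the four cases correspond to the four ways of choosing which of the two Young splittings of the mixed terms $\io u^p v$, $\io v^p u$ to route "diagonally" versus "off-diagonally", and verifying in each case that the stated hypothesis makes both diagonal coefficients strictly positive.

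Finally, the lower-order terms $+\mu_1\io u^p$ and $+\mu_2\io v^p$, which have no definite sign relative to what we want, I would absorb: by Young, $\io u^p \le \eta'\io u^{p+1} + C(\eta')|\Omega|$ — but since the theorem's conclusion allows an \emph{additive} term of the form $-B(p,\eta)\io_0^t(\io u)^{p+1} - B(p,\eta)\io_0^t(\io v)^{p+1}$, it is cleaner to instead use Hölder, $\io u^p \le |\Omega|^{\f{1}{p+1}}(\io u^{p+1})^{\f{p}{p+1}}$ is not quite right; rather, note that the $(\io u)^{p+1}$ term on the right is designed to dominate, after Hölder's inequality $(\io u)^{p+1}\le|\Omega|^p\io u^{p+1}$ is \emph{reversed in spirit}, precisely the contribution $\io u^p$ bounded via $\io u^p \le (\io u^{p+1})^{p/(p+1)}|\Omega|^{1/(p+1)} \le \eta\io u^{p+1}+C(\eta)|\Omega|$ and then $C(\eta)|\Omega| \le C(\eta)|\Omega|^{1-p}(\io u)^{p+1}$ is false for large mass — so the correct route is: keep $+\mu_1\io u^p$, bound it by $\eta\io u^{p+1} + C(p,\eta)$ and then observe $C(p,\eta)\le C(p,\eta)(\max\{1,\io u + \io v\})^{p+1}$, absorbing the constant into the $B(p,\eta)(\io u)^{p+1}+B(p,\eta)(\io v)^{p+1}$ term via $1\le (\io u)^{p+1}+(\io v)^{p+1}+1$; the $\eta\io u^{p+1}$ part is subtracted from $C$, giving the $(C-\eta)$ coefficient. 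Time-integrating the resulting differential inequality from $0$ to $t$, using $u(\cdot,0)=u_0$, $v(\cdot,0)=v_0$ in the $W^{1,q}$-solution sense, yields exactly the claimed estimate, with $C$ the (strictly positive) diagonal coefficient produced by the active one of \eqref{condition:p:1}--\eqref{condition:p:4}.
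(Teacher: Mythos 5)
Your derivation of the energy inequality and the substitution $\Delta w = -\frac{1}{d_3}(\alpha u + \beta v - \gamma w)$ match the paper's route, but the central step — bounding $\frac{\chi_1\gamma(p-1)}{p d_3}\io u^p w$ — fails as proposed. Combining H\"older with Lemma~\ref{Lp-elli} as you suggest gives
\[
\frac{\chi_1\gamma(p-1)}{p d_3}\io u^p w
\le \|u\|_{L^{p+1}(\Omega)}^p\cdot\frac{\chi_1\gamma(p-1)}{p d_3}
\left(\frac{\alpha}{\gamma}\|u\|_{L^{p+1}(\Omega)} + \frac{\beta}{\gamma}\|v\|_{L^{p+1}(\Omega)}\right)
= \frac{\chi_1\alpha(p-1)}{p d_3}\io u^{p+1} + \frac{\chi_1\beta(p-1)}{p d_3}\|u\|_{L^{p+1}(\Omega)}^p\|v\|_{L^{p+1}(\Omega)}.
\]
This must be \emph{subtracted} in the energy inequality, and its first term exactly cancels the favourable $+\frac{\chi_1\alpha(p-1)}{p d_3}\io u^{p+1}$ produced by the $\alpha u^2/d_3$-part of the chemotaxis term; after the cancellation the coefficient of $\io u^{p+1}$ is $-\mu_1$, not a positive constant $C-\eta$. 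The second term is at least as large as the favourable cross term $+\frac{\chi_1\beta(p-1)}{p d_3}\io u^p v$ and wipes it out as well. That the $\gamma$'s cancel is cosmetic; the real issue is that the coefficient produced by Lemma~\ref{Lp-elli} is of unit size, not of order $\eta$, so there is nothing left to win.

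What the paper does instead is to gain a \emph{strictly subcritical exponent}. Rather than Lemma~\ref{Lp-elli}, it uses the embedding $W^{1,\tilde{p}}(\Omega) \hookrightarrow L^{p+1}(\Omega)$ with $\tilde{p} = \frac{n(p+1)}{n+p+1} < p+1$ (for $n\ge 2$), the elliptic maximal regularity of Lemma~\ref{max_reg:elli} (so that $\|w\|_{W^{1,\tilde{p}}(\Omega)} \le C_2(\|u\|_{L^{\tilde{p}}(\Omega)} + \|v\|_{L^{\tilde{p}}(\Omega)})$), and the interpolation $\|u\|_{L^{\tilde{p}}(\Omega)} \le \|u\|_{L^1(\Omega)}^{1-\theta}\|u\|_{L^{p+1}(\Omega)}^\theta$ for some $\theta \in (0,1)$. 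The resulting bound for $\io u^p w$ is of the form $\|u\|_{L^1}^{1-\theta}\|u\|_{L^{p+1}}^{p+\theta} + \|u\|_{L^{p+1}}^p\|v\|_{L^1}^{1-\theta}\|v\|_{L^{p+1}}^\theta$. Because the exponent $p+\theta$ is strictly below $p+1$, Young's inequality can split this into $\eta\io u^{p+1}$ plus $\eta\io v^{p+1}$ with $\eta$ \emph{arbitrary}, together with powers of $\io u$ and $\io v$ — which is precisely where the $B(p,\eta)\int_0^t(\io u)^{p+1}$ and $B(p,\eta)\int_0^t(\io v)^{p+1}$ corrections arise. Without this strict gain in exponent there is no small parameter available, and the argument does not close.

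Two secondary points: the $+\mu_1\io u^p$ and $+\mu_2\io v^p$ terms are nonnegative, hence simply dropped in the lower bound — no absorption is required; and your suggestion that the $B(p,\eta)(\io u)^{p+1}$ correction is introduced to deal with these terms misattributes its origin, which is entirely in the interpolation estimate above.
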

%%%%%%%%%%%%%%%%%%%%%%%%%%%%%%%%%%%%%%%%%%%%%%%%%%%%%%%%%%%%%%%%%%
%------------------------------proof------------------------------%
\begin{proof}
For fixed $T_0 \in (0, T)$, $t_0 \in (0, T_0)$
and $\delta \in (0, T_0 - t_0)$,
we define $\chi_{\delta} \in W^{1, \infty}(\R)$ by
\[
    \chi_{\delta}(t) :=
    \begin{cases}
      1, & t < t_0, \\
      \frac{t_0 - t - \delta}{\delta},
      & t \in [t_0, t_0 + \delta], \\
      0, & t > t_0 + \delta.
    \end{cases}
\]
For each $\xi > 0$, the function $(u + \xi)^{p-1}$ belongs to
$L^{\infty}_{{\rm loc}}([0,T); W^{1,q}(\Omega))$,
so if we set $u(\cdot, t) := u_0$ for $t \le 0$,
then for any $h \in (0,1)$,
\[
    \varphi(x,t) := \chi_{\delta}(t) \dfrac{1}{h}
      \int_{t-h}^{t} (u(x,s) + \xi)^{p-1}\, ds,
    \quad
      (x,t) \in \Omega \times (0, T)
\]
defines a test function for \eqref{strong:u}. Arguing as in \cite[Lemma 4.9]{L-2015}, and passing to the limits $\delta\to 0$, $h\to 0$ and $\xi\to 0$ (in this order), we arrive at (cf. \cite[bottom of p.~1523]{L-2015})
\begin{align*}
    &\io u^p(\cdot, t_0) - \io u_0^p
      + \dfrac{p-1}{p} \io u_0^p - \dfrac{p-1}{p} \io u^p(\cdot, t_0)
\\
    &\quad\, \ge (p-1) \chi_1 \int_{0}^{t_0} \io
      u^{p-1} ∇w\cdot∇u
    + \mu_1 \int_{0}^{t_0} \io u^p (1 - u - a_1 v),
\end{align*}
Here we integrate by parts and employ the identity
$\Delta w = - \frac{\alpha u + \beta v - \gamma w}{d_3}$
to obtain
\begin{align}
\notag
    &\dfrac{1}{p} \io u^p(\cdot, t_0) - \dfrac{1}{p} \io u_0^p
\\ \notag
    &\quad\, \ge -\dfrac{p-1}{p} \chi_1 \int_{0}^{t_0} \io u^p \Delta w
      + \mu_1 \int_{0}^{t_0} \io u^p (1 - u - a_1 v)
\\ \notag
     &\quad\, = 
       \dfrac{\chi_1 \alpha (p-1)}{d_3 p} \int_{0}^{t_0} \io u^{p+1}
       + \dfrac{\chi_1 \beta (p-1)}{d_3 p} \int_{0}^{t_0} \io u^p v
       - \dfrac{\chi_1 \gamma (p-1)}{d_3 p} \int_{0}^{t_0} \io u^p w
\\ \notag
     &\qquad\,
       + \mu_1 \int_{0}^{t_0} \io u^p
       - \mu_1 \int_{0}^{t_0} \io u^{p+1}
       - a_1 \mu_1 \int_{0}^{t_0} \io u^p v
\\ \notag
    &\quad\, \ge
      \left(\dfrac{\chi_1 \alpha (p-1)}{d_3 p} - \mu_1 \right)
        \int_{0}^{t_0} \io u^{p+1}
      - \dfrac{\chi_1 \gamma (p-1)}{d_3 p} \int_{0}^{t_0} \io u^p w
\\
     &\qquad\, 
     + \left(\dfrac{\chi_1 \beta (p-1)}{d_3 p} - a_1 \mu_1
     \right) \int_{0}^{t_0} \io u^p v,
\label{energy:u:blowup}
\end{align}
where we also used nonnegativity of $u$.
We shall now treat the second term
on the right hand side of \eqref{energy:u:blowup}
in a different way than in \cite[Lemma 4.2]{X-2020}.
By the H\"{o}lder inequality and the embedding
$W^{1, \widetilde{p}}(\Omega) \hookrightarrow L^{p+1}(\Omega)$,
where $\widetilde{p} = \frac{n(p+1)}{n+p+1}$ if $n \ge 2$
and $\widetilde{p} > 1$ if $n = 1$,
we can find $C_1 > 0$ such that
\begin{align}\label{upw:1}
  \io u^p w \le \|u\|_{L^{p+1}(\Omega)}^{p}
    \|w\|_{L^{p+1}(\Omega)}
  \le C_1 \|u\|_{L^{p+1}(\Omega)}^{p}
    \|w\|_{W^{1,\widetilde{p}}(\Omega)}.
\end{align}
Moreover, Lemma~\ref{max_reg:elli} provides $C_2 > 0$ such that
\begin{align}
\notag
    \|w\|_{W^{1,\widetilde{p}}(\Omega)}
    &\le C_2 (\|u\|_{L^{\widetilde{p}}(\Omega)}
        + \|v\|_{L^{\widetilde{p}}(\Omega)})
\\
    &\le C_2 (\|u\|_{L^1(\Omega)}^{1-\theta}
        \|u\|_{L^{p+1}(\Omega)}^{\theta}
        + \|v\|_{L^1(\Omega)}^{1-\theta}
        \|v\|_{L^{p+1}(\Omega)}^{\theta})
\label{upw:2}
\end{align}
with some $\theta \in (0,1)$,
where we also used the interpolation inequality.
Inserting \eqref{upw:2} into \eqref{upw:1},
and using the Young inequality,
given $\eta > 0$ we can find
$C_3 = C_3(p, \eta) > 0$,
$C_4 = C_4(p, \eta) > 0$
and $C_5 = C_5(p, \eta) > 0$ such that
\begin{align}
\notag
    &\dfrac{\chi_1 \gamma (p-1)}{d_3 p} \io u^p w
\\ \notag
    &\quad\, \le \dfrac{\chi_1 \gamma (p-1)}{d_3 p} C_1 C_2
      (\|u\|_{L^1(\Omega)}^{1-\theta}
        \|u\|_{L^{p+1}(\Omega)}^{\theta + p}
        + \|u\|_{L^{p+1}(\Omega)}^{p}
        \|v\|_{L^1(\Omega)}^{1-\theta}
        \|v\|_{L^{p+1}(\Omega)}^{\theta})
\\ \notag
    &\quad\, \le \f{\eta}{3p} \io u^{p+1} + C_3 \left(\io u\right)^{p+1}
      + \f{\eta}{3p} \io u^{p+1}
      + C_4 \|v\|_{L^1(\Omega)}^{(1-\theta)(p+1)}
        \|v\|_{L^{p+1}(\Omega)}^{\theta(p+1)}
\\
    &\quad\, \le \f{2\eta}{3p} \io u^{p+1} + \f{\eta}{3p} \io v^{p+1}
      + C_5 \left(\io u\right)^{p+1} + C_5 \left(\io v\right)^{p+1}.
\label{upw}
\end{align}
For the third term on the right hand side of \eqref{energy:u:blowup},
we first consider the case when $p$ satisfies
\eqref{condition:p:1} or \eqref{condition:p:2} so that 
\(
    \dfrac{\chi_1 \beta (p-1)}{d_3 p} - a_1 \mu_1 > 0.
\)
Inserting this into \eqref{energy:u:blowup}
along with \eqref{upw}, we achieve
\begin{align}
\notag
        \dfrac{1}{p} \io u^p(\cdot, t_0) - \dfrac{1}{p} \io u_0^p
&\ge 
      \left(\dfrac{\chi_1 \alpha (p-1)}{d_3 p} - \mu_1
         - \f{2\eta}{3p} \right) \int_0^{t_0}\io u^{p+1}
      - \f{\eta}{3p}
          \int_{0}^{t_0} \io v^{p+1}
\\
    &\quad\, - C_5 \int_{0}^{t_0} \left(\io u\right)^{p+1}
        - C_5 \int_{0}^{t_0} \left(\io v\right)^{p+1}.
        \label{up2}
\end{align}
Next, we consider the case when $p$ fulfills
\eqref{condition:p:3} or \eqref{condition:p:4} and hence $\dfrac{\chi_1 \beta (p-1)}{d_3 p} - a_1 \mu_1 \le 0$.
We apply the Young inequality to see that
\begin{align}\label{upv}
    \io u^p v \le \dfrac{p}{p+1} \io u^{p+1}
        + \dfrac{1}{p+1} \io v^{p+1}.
\end{align}
Combining \eqref{upw} and \eqref{upv}
with \eqref{energy:u:blowup} yields
\begin{align}
\notag
    &\dfrac{1}{p} \io u^p(\cdot, t_0) - \dfrac{1}{p} \io u_0^p
\\ \notag
    &\quad\, \ge 
      \left(\dfrac{\chi_1 \alpha (p-1)}{d_3 p} - \mu_1
      + \dfrac{\chi_1 \beta (p-1)}{d_3 (p + 1)}
        - \dfrac{a_1 \mu_1 p}{p+1} - \f{2\eta}{3p} \right) \int_0^{t_0}\io u^{p+1}
\\ \notag
     &\qquad\, + \left(
     \dfrac{\chi_1 \beta (p-1)}{d_3 p (p + 1)}
     - \dfrac{a_1 \mu_1}{p+1} - \f{\eta}{3p}\right)
          \int_{0}^{t_0} \io v^{p+1}
\\
    &\qquad\, - C_5 \int_{0}^{t_0} \left(\io u\right)^{p+1}
        - C_5 \int_{0}^{t_0} \left(\io v\right)^{p+1}.
\label{up}
\end{align}
Similarly, when $p$ satisfies \eqref{condition:p:1}
or \eqref{condition:p:3}, we have
\begin{align}
\notag
        \dfrac{1}{p} \io v^p(\cdot, t_0) - \dfrac{1}{p} \io v_0^p
&\ge 
      \left(\dfrac{\chi_2 \beta (p-1)}{d_3 p} - \mu_2
         - \f{2\eta}{3p} \right) \int_0^{t_0}\io v^{p+1}
      - \f{\eta}{3p}
          \int_{0}^{t_0} \io u^{p+1}
\\
    &\quad\, - C_6 \int_{0}^{t_0} \left(\io v\right)^{p+1}
        - C_6 \int_{0}^{t_0} \left(\io u\right)^{p+1}
        \label{vp2}
\end{align}
with some $C_6 = C_6(p, \eta) > 0$, whereas when $p$ satisfies
\eqref{condition:p:2} or \eqref{condition:p:4}, it follows that
\begin{align}
\notag
    &\dfrac{1}{p} \io v^p(\cdot, t_0) - \dfrac{1}{p} \io v_0^p
%\\ \notag &\quad\,
\ge 
      \left(\dfrac{\chi_2 \beta (p-1)}{d_3 p} - \mu_2
      {}+ \dfrac{\chi_2 \alpha (p-1)}{d_3 (p + 1)}
        - \dfrac{a_2 \mu_2 p}{p+1} - \f{2\eta}{3p} \right) \int_0^{t_0}\io v^{p+1}
\\ %\notag
     &\, + \left(
     \dfrac{\chi_2 \alpha (p-1)}{d_3 p (p + 1)}
     - \dfrac{a_2 \mu_2}{p+1} - \f{\eta}{3p}\right)
          \int_{0}^{t_0} \io u^{p+1}
%\\    &\qquad\, 
- C_7 \int_{0}^{t_0} \left(\io v\right)^{p+1}
        - C_7 \int_{0}^{t_0} \left(\io u\right)^{p+1}.
\label{vp}
\end{align}
%
%
%\begin{align}
%\notag
%    &\dfrac{1}{p} \io v^p(\cdot, t_0) - \dfrac{1}{p} \io v_0^p
%\\ \notag
%    &\quad\, \ge 
%     \left(\dfrac{\chi_2 \beta (p-1)}{d_3 p} - \mu_2
%        - \dfrac{a_2 \mu_2 p}{p+1} - 2\eta \right) \io v^{p+1}
%      + \left(- \dfrac{a_2 \mu_2}{p+1} - \eta\right)
%          \int_{0}^{t_0} \io u^{p+1}
%\\
%    &\qquad\, - C_6 \int_{0}^{t_0} \left(\io v\right)^{p+1}
%        - C_7 \int_{0}^{t_0} \left(\io u\right)^{p+1}
%\label{vp}
%\end{align}
%
with some $C_7 = C_7(p, \eta) > 0$.
The claim thus follows from
\eqref{up2}, \eqref{up}, \eqref{vp2} and \eqref{vp}, each of which ensures positivity of the coefficients of $\int\int u^{p+1}$ and $\int\int v^{p+1}$ in the respective cases.
\end{proof}
%-----------------------------------------------------------------%

Using Lemmas~\ref{finite_time} and \ref{blowup:pre},
we now infer from \eqref{HHE-criterion} in Lemma~\ref{HHE-local}
that corresponding solution may blow up in finite time.

%
%%%%%%%%%%%%%%%%%%%%%%%%%%%%%%%%%%%%%%%%%%%%%%%%%%%%%%%%%%%%%%%%%%
%                                                 Lemma                                             %
%%%%%%%%%%%%%%%%%%%%%%%%%%%%%%%%%%%%%%%%%%%%%%%%%%%%%%%%%%%%%%%%%%
%
\begin{lem}\label{blowup:result}
For all $p > 1$ satisfying
\eqref{condition:p:1}, \eqref{condition:p:2},
\eqref{condition:p:3} or \eqref{condition:p:4},
there exists $C(p) > 0$ with the following property\/{\rm :}
Whenever $q > \max\{n, 2\}$ and $u_0, v_0 \in W^{1,q}(\Omega)$
are nonnegative such that \eqref{condition:initial} is valid,
the strong $W^{1,q}$-solution $(u,v,w)$ blows up in finite time\/{\rm ;}
that is, in Lemma~\ref{HHE-local}, we have $\Tmax < \infty$ and
\begin{align}\label{HHE-blowup}
    \limsup_{t \nearrow \Tmax}
        (\|u(\cdot, t)\|_{L^{\infty}(\Omega)}
      + \|v(\cdot, t)\|_{L^{\infty}(\Omega)}) = \infty.
\end{align}
\end{lem}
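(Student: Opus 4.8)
The plan is to feed the integral inequality of Lemma~\ref{blowup:pre} into the superlinear comparison principle of Lemma~\ref{finite_time}. I would start from the strong $W^{1,q}$-solution $(u,v,w)$ on $\Omega\times(0,\Tmax)$ produced by Lemma~\ref{HHE-local} and set $y(t):=\io u^p(\cdot,t)+\io v^p(\cdot,t)$, which is nonnegative and---because $u,v\in C^0(\Ombar\times[0,\Tmax))$---continuous on $[0,\Tmax)$. Fixing $\eta:=\f C2$ with $C>0$ the constant of Lemma~\ref{blowup:pre} (so that $C-\eta=\f C2>0$) and writing $B:=B(p,\f C2)$, Lemma~\ref{blowup:pre} gives, for all $t\in(0,\Tmax)$,
\[
 y(t)\ge y(0)+\f C2\int_0^t\kl{\io u^{p+1}+\io v^{p+1}}-B\int_0^t\kl{\kl{\io u}^{p+1}+\kl{\io v}^{p+1}}.
\]

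The next step is to convert both integrands into expressions in $y$. For the gain term I would use Hölder's inequality, $\io u^{p+1}\ge|\Omega|^{-\f1p}\kl{\io u^p}^{\f{p+1}p}$ (and the same for $v$), together with the convexity inequality $a^s+b^s\ge 2^{1-s}(a+b)^s$ for $s=\f{p+1}p>1$, to obtain $\io u^{p+1}+\io v^{p+1}\ge 2^{-\f1p}|\Omega|^{-\f1p}y(t)^{\f{p+1}p}$. For the loss term, Lemma~\ref{L1-HHE} bounds $\io u(\cdot,s)$ and $\io v(\cdot,s)$ by $L:=\max\set{\io u_0+\io v_0,\,2|\Omega|}$, so the loss integral is at most $2L^{p+1}t$. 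Writing $d:=\f C2\cdot 2^{-\f1p}|\Omega|^{-\f1p}$, $b:=2BL^{p+1}$, $a:=y(0)=\io u_0^p+\io v_0^p$ and $\kappa:=\f{p+1}p>1$, this leaves
\[
 y(t)\ge a-bt+d\int_0^t y^{\kappa}(s)\,ds\qquad\text{for all }t\in(0,\Tmax),
\]
which is exactly the hypothesis of Lemma~\ref{finite_time} apart from the smallness requirement $a>\kl{\f{2b}{d}}^{\f1\kappa}$.

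To close the argument I would choose $C(p)$ so that \eqref{condition:initial} forces $a>(2b/d)^{1/\kappa}$, equivalently $a^{\kappa}d>2b$. Since $a^{\kappa}=\big((\io u_0^p+\io v_0^p)^{1/p}\big)^{p+1}$ and $L\le\max\set{1,2|\Omega|}\max\set{1,\io u_0+\io v_0}$, this is implied by
\[
 \kl{\io u_0^p+\io v_0^p}^{\f1p}>C(p)\max\set{1,\io u_0+\io v_0}\qquad\text{with}\qquad C(p):=\kl{\f{4B\,\max\set{1,2|\Omega|}^{p+1}}{d}}^{\f1{p+1}},
\]
and $C(p)$ depends only on $p$ (through $B$, $d$ and $|\Omega|$), as required. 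Under \eqref{condition:initial} one then has $a>0$ and $a>(2b/d)^{1/\kappa}$, so Lemma~\ref{finite_time} (applied to $y$ on $[0,T)$ for each $T<\Tmax$, then letting $T\nearrow\Tmax$) forces $\Tmax\le\f2{(\kappa-1)a^{\kappa-1}d}<\infty$; the dichotomy \eqref{HHE-criterion} of Lemma~\ref{HHE-local} then yields \eqref{HHE-blowup}. I do not expect a real obstacle in this step: every estimate is either elementary or directly quoted, and the only thing requiring care is tracking the constants so that $C(p)$ is independent of $q$ and of the particular solution. The genuine difficulty---obtaining the sign-definite coefficients $C-\eta$ in the underlying integral inequality---was already handled in Lemma~\ref{blowup:pre}.
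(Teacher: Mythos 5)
Your proposal is correct and follows essentially the same route as the paper: feed Lemma~\ref{blowup:pre} (with $\eta=\tfrac C2$) into Lemma~\ref{finite_time}, using Hölder and convexity for the gain term and Lemma~\ref{L1-HHE} for the loss term, then engineer $C(p)$ so that \eqref{condition:initial} triggers the smallness hypothesis $a>(2b/d)^{1/\kappa}$. The only cosmetic deviations are that you bound $(\io u)^{p+1}+(\io v)^{p+1}\le 2L^{p+1}$ while the paper uses $(\io u)^{p+1}+(\io v)^{p+1}\le(\io u+\io v)^{p+1}$ (saving a factor $2$ and yielding a slightly smaller $C(p)$), and that you argue directly by letting $T\nearrow\Tmax$ rather than by contradiction assuming $\Tmax=\infty$; neither difference is substantive.
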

%%%%%%%%%%%%%%%%%%%%%%%%%%%%%%%%%%%%%%%%%%%%%%%%%%%%%%%%%%%%%%%%%%
%------------------------------proof------------------------------%
\begin{proof}
Taking $C > 0$ from Lemma~\ref{blowup:pre}, 
we can choose $\eta > 0$ as
$\eta = \frac{1}{2}C$.
We let
\[
    C(p) := \left(
      \dfrac{2^{2+\frac{1}{p}} |\Omega|^{\frac{1}{p}} 
      B(p,\eta)}{C}\right)^{\frac{1}{p+1}}\max\{1, 2|\Omega|\},
\]
where $B(p, \eta) > 0$ is the constant
provided in Lemma~\ref{blowup:pre},
and suppose on the contrary that \eqref{condition:initial} holds
for some nonnegative
$u_0, v_0 \in W^{1,q}(\Omega)$ with $q > \max\{n,2\}$,
but the corresponding strong $W^{1,q}$-solution of \eqref{HHE-LV}
is global in time, i.e., $\Tmax = \infty$.
By Lemma~\ref{blowup:pre}, we have
\begin{align*}
    \io u^p(\cdot, t) + \io v^p(\cdot, t)
    &\ge \io u_0^p + \io v_0^p
    + \frac{C}{2} \int_{0}^{t}
      \left(\io u^{p+1} + \io v^{p+1}\right)
\\
    &\quad\, - B(p, \eta) \int_{0}^{t} \left(\io u \right)^{p+1}
        - B(p, \eta) \int_{0}^{t} \left(\io v \right)^{p+1} 
\end{align*}
for all $t > 0$.
Now we let
$y(t) := \io u^p(\cdot, t) + \io v^p(\cdot, t)$ for $t \ge 0$,
so that $y$ is continuous on $[0, \infty)$
and satisfies
\begin{align*}
    y(t) &\ge y(0) + \frac{C}{2} \int_{0}^{t}
      \left(\io u^{p+1} +\! \!\io v^{p+1}\right)
%\\    &\quad\,
    - B(p, \eta) \int_{0}^{t} \left(\io u \right)^{p+1}
        \!- B(p, \eta) \int_{0}^{t} \left(\io v \right)^{p+1}
\\
    &\ge y(0) + 2^{-1-\frac{1}{p}} |\Omega|^{-\frac{1}{p}} C
      \int_{0}^{t} y^{\frac{p+1}{p}}(s) \, ds
    - B(p, \eta) \int_{0}^{t}
    \left[\left(\io u \right)^{p+1} + \left(\io v \right)^{p+1}\right]
\end{align*}
for all $t > 0$.
Here, we infer from Lemma~\ref{L1-HHE} that
\begin{align*}
    \int_{0}^{t}
    \left[\left(\io u \right)^{p+1} + \left(\io v \right)^{p+1}\right]
    &\le \int_{0}^{t} (\max\{1, 2|\Omega|\}\widehat{m})^{p+1}
%\\&
    = (\max\{1, 2|\Omega|\})^{p+1} \widehat{m}^{p+1} t
\end{align*}
for all $t > 0$, where
$\widehat{m} := \max\{1, \io u_0 + \io v_0\}$.
Therefore,
\[
    y(t) \ge y(0) + 2^{-1-\frac{1}{p}} |\Omega|^{-\frac{1}{p}} C
      \int_{0}^{t} y^{\frac{p+1}{p}}(s) \, ds
    - (\max\{1, 2|\Omega|\})^{p+1} B(p, \eta) \widehat{m}^{p+1} t
\]
for all $t > 0$.
Now taking
$a = y(0)$, $b = (\max\{1, 2|\Omega|\})^{p+1} B(p, \eta) \widehat{m}^{p+1}$,
$d = 2^{-1-\frac{1}{p}} |\Omega|^{-\frac{1}{p}} C$
and $\kappa = \frac{p+1}{p}$,
we see from \eqref{condition:initial} that
\[
    a\left(\dfrac{2b}{d}\right)^{-\frac{1}{\kappa}}
    = y(0) \left(
      \dfrac{2(\max\{1, 2|\Omega|\})^{p+1} B(p, \eta) \widehat{m}^{p+1}}
      {2^{-1-\frac{1}{p}} |\Omega|^{-\frac{1}{p}} C}
      \right)^{-\frac{p}{p+1}}
    = y(0) (C(p)\widehat{m})^{-p} > 1,
\]
and applying Lemma~\ref{finite_time}, however, we conclude that
$\Tmax$ must be finite.
The conclusion \eqref{HHE-blowup} is then an immediate
consequence of Lemma~\ref{HHE-local}.
\end{proof}
%-----------------------------------------------------------------%

%
%%==============================================================%%
%%                                        Subsection 5.2                                      %%
%%                                   Proof of main theorem                    %%
%%==============================================================%%
\subsection{Proof of main theorem}

We finally prove our main result,
corresponding to
\cite[Theorem 1.1]{L-2015} and \cite[Theorem 1.3]{X-2020},
and expanding these to the non-symmetric case.

%------------------------------proof------------------------------%
\begin{prth1.1}
Let $u_0, v_0 \in W^{1,q}(\Omega)$ be nonnegative
and satisfy \eqref{condition:initial},
and $T > 0$ be the maximal existence time
of the corresponding solution $(u,v,w)$ of \eqref{HHE-LV}.
According to Lemma~\ref{blowup:result},
we know that $T < \infty$ and
\begin{align}\label{main-blowup}
    \limsup_{t \nearrow T}
        (\|u(\cdot, t)\|_{L^{\infty}(\Omega)}
      + \|v(\cdot, t)\|_{L^{\infty}(\Omega)}) = \infty.
\end{align}
Assume that the conclusion of Theorem~\ref{main_thm}
were not true.
Then for some $M > 0$ there would be sequences
$(d_{1,j})_j, (d_{2,j})_j \subset (0, \infty)$ such that
$d_{1,j} \searrow 0$ and $d_{2,j} \searrow 0$ as $j \to \infty$, and
\begin{align}\label{main:cont1}
    u_{(d_{1,j}, \, d_{2,j})}(x, t)
    + v_{(d_{1,j}, \, d_{2,j})}(x,t) \le M
\end{align}
for all $(x, t) \in \Omega \times (0,T)$ and $j \in \N$.
Here, Lemma~\ref{bdd:app} entails that
\begin{align}
      &u_{(d_{1,j}, \, d_{2,j})} \to \widetilde{u}
    \quad\mbox{in}\ 
  C^0(\Ombar \times [0,T]),
    \label{main:cont2}
\\
  &v_{(d_{1,j}, \, d_{2,j})} \to \widetilde{v}
      \quad\mbox{in}\ 
  C^0(\Ombar \times [0,T])
        \quad\mbox{and}
    \label{main:cont3}
\\ \notag
  &w_{(d_{1,j}, \, d_{2,j})} \to \widetilde{w}  
      \quad\mbox{in}\ 
  C^{2,0}(\Ombar \times [0,T])     
\end{align}
as $j \to \infty$,
where $(\widetilde{u}, \widetilde{v}, \widetilde{w})$
is a strong $W^{1,q}$-solution of \eqref{HHE-LV} in
$\Omega \times (0,T)$.
Due to Lemma~\ref{unique:HHE},
we know that such solutions are unique,
and therefore we have
$(\widetilde{u}, \widetilde{v}, \widetilde{w}) = (u,v,w)$,
so that in particular
$u + v = \widetilde{u} + \widetilde{v} \le M$
in $\Omega \times (0,T)$,
according to \eqref{main:cont1}, \eqref{main:cont2}
and \eqref{main:cont3}.
However, this contradicts \eqref{main-blowup},
so the proof is complete.
\qed
\end{prth1.1}

%
%%==============================================================%%
%%==============================================================%%
%%                                              Reference                                              %%
%%==============================================================%%
%%==============================================================%%

\footnotesize 
  \setlength{\parskip}{0pt}
  \setlength{\itemsep}{0pt plus 0.2ex}
\bibliographystyle{abbrv}
%\nocite{*}
%\bibliography{ref}

\begin{thebibliography}{10}

\bibitem{ADN-1959}
S.~Agmon, A.~Douglis, and L.~Nirenberg.
\newblock Estimates near the boundary for solutions of elliptic partial
  differential equations satisfying general boundary conditions. {I}.
\newblock {\em Comm.\ Pure Appl.\ Math.}, 12:623--727, 1959.

\bibitem{A-1993}
H.~Amann.
\newblock Nonhomogeneous linear and quasilinear elliptic and parabolic boundary
  value problems.
\newblock In {\em Function spaces, differential operators and nonlinear
  analysis ({F}riedrichroda, 1992)}, volume 133 of {\em Teubner-Texte Math.},
  pages 9--126. Teubner, Stuttgart, 1993.

\bibitem{BBTW}
N.~Bellomo, A.~Bellouquid, Y.~Tao, and M.~Winkler.
\newblock Toward a mathematical theory of {K}eller--{S}egel models of pattern
  formation in biological tissues.
\newblock {\em Math.\ Models Methods Appl.\ Sci.}, 25(9):1663--1763, 2015.

\bibitem{B-1998}
P.~Biler.
\newblock Local and global solvability of some parabolic systems modelling
  chemotaxis.
\newblock {\em Adv.\ Math.\ Sci.\ Appl.}, 8(2):715--743, 1998.

\bibitem{BLM-2016}
T.~Black, J.~Lankeit, and M.~Mizukami.
\newblock On the weakly competitive case in a two-species chemotaxis model.
\newblock {\em IMA J. Appl.\ Math.}, 81(5):860--876, 2016.

\bibitem{F-1976}
A.~Friedman.
\newblock {\em Partial differential equations}.
\newblock Robert E. Krieger Publishing Co., Huntington, N.Y., original edition,
  1976.

\bibitem{F-2021}
M.~Fuest.
\newblock Approaching optimality in blow-up results for {K}eller--{S}egel
  systems with logistic-type dampening.
\newblock {\em NoDEA Nonlinear Differential Equations Appl.}, 28(2):Paper No.\
  16, 17 pages, 2021.

\bibitem{GT-2001}
D.~Gilbarg and N.~S. Trudinger.
\newblock {\em Elliptic partial differential equations of second order}.
\newblock Classics in Mathematics. Springer-Verlag, Berlin, 2001.
\newblock Reprint of the 1998 edition.

\bibitem{horstmann_survey}
D.~Horstmann.
\newblock From 1970 until present: the {K}eller--{S}egel model in chemotaxis
  and its consequences. {I}.
\newblock {\em Jahresber.\ Deutsch.\ Math.-Verein.}, 105(3):103--165, 2003.

\bibitem{HW-2001}
D.~Horstmann and G.~Wang.
\newblock Blow-up in a chemotaxis model without symmetry assumptions.
\newblock {\em European J. Appl.\ Math.}, 12(2):159--177, 2001.

\bibitem{ISY-2014}
S.~Ishida, K.~Seki, and T.~Yokota.
\newblock Boundedness in quasilinear {K}eller--{S}egel systems of
  parabolic--parabolic type on non-convex bounded domains.
\newblock {\em J. Differential Equations}, 256(8):2993--3010, 2014.

\bibitem{KS-2016}
K.~Kang and A.~Stevens.
\newblock Blowup and global solutions in a chemotaxis-growth system.
\newblock {\em Nonlinear Anal.}, 135:57--72, 2016.

\bibitem{KS-1970}
E.~F. Keller and L.~A. Segel.
\newblock Initiation of slime mold aggregation viewed as an instability.
\newblock {\em J. Theoret.\ Biol.}, 26(3):399--415, 1970.

\bibitem{KT-2000}
H.~Kozono and Y.~Taniuchi.
\newblock Limiting case of the {S}obolev inequality in {BMO}, with application
  to the {E}uler equations.
\newblock {\em Comm.\ Math.\ Phys.}, 214(1):191--200, 2000.

\bibitem{L-2015}
J.~Lankeit.
\newblock Chemotaxis can prevent thresholds on population density.
\newblock {\em Discrete Contin.\ Dyn.\ Syst.\ Ser.\ B}, 20(5):1499--1527, 2015.

\bibitem{LW-2017}
J.~Lankeit and Y.~Wang.
\newblock Global existence, boundedness and stabilization in a high-dimensional
  chemotaxis system with consumption.
\newblock {\em Discrete Contin.\ Dyn.\ Syst.}, 37(12):6099--6121, 2017.

\bibitem{lan_win_survey}
J.~Lankeit and M.~Winkler.
\newblock Facing low regularity in chemotaxis systems.
\newblock {\em Jahresber.\ Dtsch.\ Math.-Ver.}, 122(1):35--64, 2020.

\bibitem{LMZ-2018}
K.~Lin, C.~Mu, and H.~Zhong.
\newblock A new approach toward stabilization in a two-species chemotaxis model
  with logistic source.
\newblock {\em Comput.\ Math.\ Appl.}, 75(3):837--849, 2018.

\bibitem{MT-1996}
M.~Mimura and T.~Tsujikawa.
\newblock Aggregating pattern dynamics in a chemotaxis model including growth.
\newblock {\em Physica A: Statistical Mechanics and its Applications},
  230(3):499--543, 1996.

\bibitem{M-2018}
M.~Mizukami.
\newblock Boundedness and stabilization in a two-species chemotaxis-competition
  system of parabolic--parabolic--elliptic type.
\newblock {\em Math.\ Methods Appl.\ Sci.}, 41(1):234--249, 2018.

\bibitem{MTY-2022}
M.~Mizukami, Y.~Tanaka, and T.~Yokota.
\newblock Can chemotactic effects lead to blow-up or not in two-species
  chemotaxis-competition models?
\newblock {\em Z. Angew.\ Math.\ Phys.}, 73(6):Paper No.\ 239, 25 pages, 2022.

\bibitem{N-2001}
T.~Nagai.
\newblock Blowup of nonradial solutions to parabolic-elliptic systems modeling
  chemotaxis in two-dimensional domains.
\newblock {\em J. Inequal.\ Appl.}, 6(1):37--55, 2001.

\bibitem{NS-1998}
T.~Nagai and T.~Senba.
\newblock Global existence and blow-up of radial solutions to a
  parabolic--elliptic system of chemotaxis.
\newblock {\em Adv.\ Math.\ Sci.\ Appl.}, 8(1):145--156, 1998.

\bibitem{OT-2003}
T.~Ogawa and Y.~Taniuchi.
\newblock On blow-up criteria of smooth solutions to the 3-{D} {E}uler
  equations in a bounded domain.
\newblock {\em J. Differential Equations}, 190(1):39--63, 2003.

\bibitem{OTYM-2002}
K.~Osaki, T.~Tsujikawa, A.~Yagi, and M.~Mimura.
\newblock Exponential attractor for a chemotaxis-growth system of equations.
\newblock {\em Nonlinear Anal.}, 51(1, Ser. A: Theory Methods):119--144, 2002.

\bibitem{SS-2001}
T.~Senba and T.~Suzuki.
\newblock Parabolic system of chemotaxis: blowup in a finite and the infinite
  time.
\newblock volume~8, pages 349--367. 2001.
\newblock IMS Workshop on Reaction-Diffusion Systems (Shatin, 1999).

\bibitem{S-1993}
E.~M. Stein.
\newblock {\em Harmonic analysis: real-variable methods, orthogonality, and
  oscillatory integrals}, volume~43 of {\em Princeton Mathematical Series}.
\newblock Princeton University Press, Princeton, NJ, 1993.
\newblock With the assistance of Timothy S. Murphy, Monographs in Harmonic
  Analysis, III.

\bibitem{STW-2014}
C.~Stinner, J.~I. Tello, and M.~Winkler.
\newblock Competitive exclusion in a two-species chemotaxis model.
\newblock {\em J. Math.\ Biol.}, 68(7):1607--1626, 2014.

\bibitem{TW-2007}
J.~I. Tello and M.~Winkler.
\newblock A chemotaxis system with logistic source.
\newblock {\em Comm.\ Partial Differential Equations}, 32(4-6):849--877, 2007.

\bibitem{TW-2012}
J.~I. Tello and M.~Winkler.
\newblock Stabilization in a two-species chemotaxis system with a logistic
  source.
\newblock {\em Nonlinearity}, 25(5):1413--1425, 2012.

\bibitem{W-2020}
L.~Wang.
\newblock Improvement of conditions for boundedness in a two-species chemotaxis
  competition system of parabolic--parabolic--elliptic type.
\newblock {\em J. Math.\ Anal.\ Appl.}, 484(1):Paper No.\ 123705, 10 pages,
  2020.

\bibitem{W-2013}
M.~Winkler.
\newblock Finite-time blow-up in the higher-dimensional parabolic--parabolic
  {K}eller--{S}egel system.
\newblock {\em J. Math.\ Pures Appl.\ (9)}, 100(5):748--767, 2013.

\bibitem{W-2014}
M.~Winkler.
\newblock How far can chemotactic cross-diffusion enforce exceeding carrying
  capacities?
\newblock {\em J. Nonlinear Sci.}, 24(5):809--855, 2014.

\bibitem{W-2018}
M.~Winkler.
\newblock Finite-time blow-up in low-dimensional {K}eller--{S}egel systems with
  logistic-type superlinear degradation.
\newblock {\em Z. Angew.\ Math.\ Phys.}, 69(2):Paper No.\ 40, 25 pages, 2018.

\bibitem{WYW-2006}
Z.~Wu, J.~Yin, and C.~Wang.
\newblock {\em Elliptic \& parabolic equations}.
\newblock World Scientific Publishing Co. Pte. Ltd., Hackensack, NJ, 2006.

\bibitem{X-2020}
G.~Xu.
\newblock The carrying capacity to chemotaxis system with two species and
  competitive kinetics in {$N$} dimensions.
\newblock {\em Z. Angew.\ Math.\ Phys.}, 71(4):Paper No.\ 133, 28 pages, 2020.

\bibitem{X-2023-1}
G.~Xu.
\newblock Emergence of lager densities in chemotaxis system with indirect
  signal production and non-radial symmetry case.
\newblock {\em Discrete Contin.\ Dyn.\ Syst.\ Ser.\ B}, 28(1):722--747, 2023.

\bibitem{X-2023-2}
G.~Xu.
\newblock Emergence of large densities in a chemotaxis system with signaling
  loops, nonlinear signal productions and competitions sources under nonradial
  symmetry case.
\newblock {\em Discrete Contin.\ Dyn.\ Syst.}, 43(12):4361--4392, 2023.

\end{thebibliography}

\end{document}